\newcommand{\aaf}{A_a^{\varphi_1,\varphi_2}}
\newtheorem{tm}{Theorem}[section]
\newtheorem{lemma}[tm]{Lemma}
\newtheorem{theorem}{Theorem}[section]
\newtheorem{corollary}[theorem]{Corollary}
\newtheorem{definition}[theorem]{Definition}
\newtheorem{proposition}[theorem]{Proposition}
\newtheorem{remark}[theorem]{Remark}
\newtheorem*{thm*}{Theorem}
\newtheorem*{cor*}{Corollary}
\newcommand{\beqa}{\begin{eqnarray*}}
	\newcommand{\eeqa}{\end{eqnarray*}}
\DeclareMathOperator*{\Opt}{Op_\tau}
\DeclareMathOperator*{\Opz}{Op_{0}}
\DeclareMathOperator*{\Opo}{Op_{1}}
\DeclareMathOperator*{\Opw}{Op^{\textit{w}}}
\DeclareMathOperator*{\OpW}{Op_{\textit{W}}}
\newcommand{\field}[1]{\mathbb{#1}}
\newcommand{\bR}{\field{R}}
\newcommand{\bN}{\field{N}}
\newcommand{\bZ}{\field{Z}}
\newcommand{\bC}{\field{C}}
\def\al{\alpha}
\def\be{\beta}
\def\ga{\gamma}
\def\G{\mathcal{G}}
\def\la{\lambda}
\def\om{\omega}
\def\cF{\mathcal{F}}
\def\cS{\mathcal{S}}
\def\cA{\mathcal{A}}
\def\cC{\mathcal{C}}
\def\cT{\mathcal{T}}
\def\cA{\mathcal{A}}
\def\a{\aleph}
\def\hf{\hat{f}}
\def\hg{\hat{g}}
\def\rd{\bR^d}
\def\rdd{{\bR^{2d}}}
\def\zdd{{\bZ^{2d}}}
\def\lrd{L^2(\rd)}
\def\zd{\bZ^d}
\def\intrd{\int_{\rd}}
\def\intrdd{\int_{\rdd}}
\def\iintrdd{\iint_{\rdd}}
\def\R{\right)}
\def\<{\left<}
\def\>{\right>}
\def\inv{^{-1}}
\def\mv1{M_v^1}
\def\phas{(x,\omega )}
\newcommand{\abs}[1]{\lvert#1\rvert}
\newcommand{\norm}[1]{\lVert#1\rVert}
\def\o{\omega}
\def\a{\alpha}
\def\b{\beta}
\def\i{\infty}
\def\R{\mathbb{R}}
\def\Ren{\mathbb{R}^d}
\def\sch{\mathcal{S}}
\def\Fur{\mathcal{F}}
\def\f{\varphi}
\def\gaw{A_a^{\f_1,\f_2}}
\def\gawtau{A_{a, \tau} ^{\f_1,\f_2}}
\def\Sn2{S_{2}(L^{2}(\Ren))}
\def\S1{S_{1}(L^{2}(\Ren))}
\def\sig00{\sigma_{0,0}}
\def\la{\langle}
\def\ra{\rangle}
\newcommand{\rr}[1]{\mathbb R^{#1}}
\newcommand{\ep}{\varepsilon}
\newcommand{\gel}{\mathcal{S}^{\left(1\right)}}
\newcommand{\ult}{\mathcal{S}^{\left(1\right)'}}
\begin{document}

	\begin{abstract}
We consider time-frequency localization operators $\gaw $ with symbols $a$ in the wide weighted modulation space
$ M^\infty_{w}(\rdd)$, and windows $ \varphi_1, \varphi_2 $ in the Gelfand-Shilov space $\gel(\rd)$. If the weights under consideration are of ultra-rapid growth, we prove that the eigenfunctions of $\gaw $ have appropriate sub\-exponential decay in phase space, i.e. that they belong to the Gefand-Shilov space $ \cS^{(\gamma)} (\rr d) $, where the parameter $\gamma \geq 1 $ is related to the growth of the considered weight.
An important role is played by $\tau$-pseudodifferential operators $\Opt(\sigma)$. In that direction we show convenient continuity properties of
$\Opt(\sigma)$ when acting on weighted modulation spaces. Furthermore, we prove
subexponential decay and regularity properties of the eigenfunctions of  $\Opt(\sigma)$ when the symbol $\sigma$ belongs to a
modulation space with appropriately chosen weight functions. As a tool we also prove new convolution relations for (quasi-)Banach weighted modulation spaces.
	\end{abstract}
	
	\title[Subexponential decay and regularity estimates]{Subexponential decay and regularity estimates for eigenfunctions of localization operators}
	\author[F. Bastianoni]{Federico Bastianoni}
	\address{Dipartimento di Scienze Matematiche, Politecnico di Torino, corso
		Duca degli Abruzzi 24, 10129 Torino, Italy}
	\email{federico.bastianoni@polito.it}
	\thanks{}
	\author[N. Teofanov]{Nenad Teofanov}
	\address{Department of Mathematics and Informatics,
	University of Novi Sad, Serbia}
	\email{nenad.teofanov@dmi.uns.ac.rs}
	\thanks{}
	
	\subjclass[2010]{47G30; 47B10; 46F05; 35S05}
	\keywords{Time-frequency analysis, pseudodifferential operators, Schatten classes, modulation spaces, Gelfand-Shilov spaces}
	\date{}

\maketitle

\section{Introduction }

It is known that the eigenfunctions of time-frequency localization operators, also known as Daubechies operators and denoted by $\gaw $,
with Gaussian windows
$$\varphi_1 (t) = \varphi_2 (t) =  \pi^{-d/4} \text{exp} (-t^2/2) \;\;\;
\text{and with a radial symbol} \;\;\; a \in L^1 (\mathbb{R}^{2d} ),
$$
are Hermite functions, i.e. they have superexponential decay in phase space, \cite{DB1}. In a different terminology, 
those eigenfunctions belong to the smallest projective  Gelfand-Shilov space $\cS^{\{ 1/2 \}} (\rr d) $ (cf. Definition \ref{de:gsspaces}).

\par

The investigations in \cite{DB1} are motivated by some questions in signal analysis. The same type of operators (under the name anti-Wick operators) is used to study the quantization problem in quantum mechanics, cf. \cite{Berezin71}. In abstract harmonic analysis, localization operators on a locally compact group $G$ and Lebesgue spaces $ L^p (G)$, $ 1\leq p \leq \infty $,
were studied in \cite{WongLocalization}. We also mention their presence in the form of Toeplitz operators in complex analysis \cite{BergCob87}.
Here we do not intend to discuss different manifestations of localization operators and refer to e.g. \cite{Englis} for a survey.

\par

In the framework of time-frequency analysis an important step forward in the study of localization operators 
was made by the seminal paper \cite{EleCharly2003}. Thereafter the subject is considered by many authors including \cite{BG2015,BCG02,ECSchattenloc2005,medit,CharlyToft2011,Nenad2016,Toft_2012_Bergmann} where among others, one can find different continuity, Schatten class and lifting properties of localization operators.
The time-frequency analysis approach   is based on the use of modulation spaces as appropriate functional analytic framework. Another issue established in  \cite{EleCharly2003, BCG02} is the identification of localization operators as Weyl pseudodifferential operators.

\par

The focus of this paper is to consider the properties of {\em  eigenfunctions} of compact localization operators. Our investigations are inspired by the recent work \cite{BasCorNic2019}. Indeed, there it is shown that if the symbol $a$ belongs to the modulation space
$ M^\infty_{v_s\otimes1}(\rdd)$, $ s>0$ (see Definition \ref{Def-UltraMod}) and $ \varphi_1, \varphi_2 \in  \sch(\Ren)$, then the eigenfunctions of $\gaw $ are actually Schwartz functions. Moreover, similar result is proved for the Weyl pseudodifferential operators whose symbol belongs to
$ M^{\infty,1} _{v_s\otimes v_t}(\rdd)$, for some $ s>0$ and every $t>0$, cf. \cite[Proposition 3.6]{BasCorNic2019}.
Here $ v_s (z) = (1 + |z|^2)^{s/2} $, $ s \in \mathbb{R}, $ $ z \in \rd$.

\par

We extend the scope of \cite{BasCorNic2019} by considering a more general class of weights, which contains the weights of subexponential growth,
apart from polynomial type weights. As explained in \cite{CharlyToft2013}, replacing polynomial weights with weights of faster growth at infinity is not a mere routine. Indeed, to treat weights of ultra-rapid growth it is necessary to replace the most common framework of the Schwartz space of test functions and its dual space of tempered distributions by the more subtle family of Gelfand-Shilov spaces and their duals spaces of ultra-distributions, cf. \cite{GS,CCK,Gram,NR,Pi88,Nenad2015,Toft_2012_Bergmann}.  To underline this difference, we refer to ultra-modulation spaces when modulation spaces are allowed to contain such ultra-distributions.

\par

One of the main tools in our analysis is the (cross-)$\tau$-Wigner distribution $ W_\tau(f,g)$, $f,g \in L^2 (\rd) $, see Definition \ref{def:tauwigner}. The relation between $ W_\tau(f,g)$ and another relevant time-frequency representation, namely
the short-time Fourier transform $V_g (f)$ (cf. Lemma \ref{lm:wignerstft}) serves as a bridge between properties of modulation
spaces and $\tau$-pseudodifferential operators. 
More precisely, we extend the recent result
\cite[Theorem 3.3]{BasCorNic2019} to a more general class of operators and weights (Theorem \ref{Th-ExtensionWeylOp}). 
Although this result follows from \cite[Theorem 3.1]{ToftquasiBanach2017} our proof is more elementary and independent.

\par

Our first main result concerns decay properties of the eigenfunctions of $\tau$-pse\-u\-do\-differen\-ti\-al operators. In fact, by using iterated
actions of the operator we conclude that its  eigenfunctions belong to the Gelfand-Shilov space  $\cS^{(\ga)}(\rd)$ (Theorem \ref{Pro-SmoothnessEigenfuctionsWeyl}). As already mentioned, 
this gives an information about regularity and decay properties of eigenfunctions which can not be captured  within the Schwartz class.

\par

Finally, we use Theorem \ref{Pro-SmoothnessEigenfuctionsWeyl} and convolution relation for modulation spaces (Proposition \ref{Pro-ConvRel}) to show that the eigenfunctions of localization operators $\gaw $
have appropriate subexponential decay in phase space if  $a \in M^\infty_{w}(\rdd)$,  $ \varphi_1, \varphi_2 \in\gel(\rd)$, and if
$w$ is of a certain  ultra-rapid growth. We use the representation of localization operators as pseudodifferential operators. Evidently, the Weyl form of localization operators suggests to introduce and consider $\tau$-localization operators by using 
$\tau$-pseudodifferential operators and the (cross-)$\tau$-Wigner distribution. 
However, it turns out the such approach does not extend the class of localization operators given by Definition \ref{def:locop}
(cf. Proposition  \ref{pr:taulocop}).

\par

We end this introduction with a brief report of the content of the paper. In Preliminaries we collect relevant
background material.  Apart from the review of known results it contains some new results or proofs (Lemma \ref{Lem-condition(sub-)exp-verified}, Proposition \ref{pr:taulocop}, Proposition \ref{Pro-ConvRel}). In Section \ref{main results}
we prove our main results: continuity properties of $\tau$-pse\-u\-do\-differen\-ti\-al operators on modulation spaces,
estimates for eigenfunctions of $\tau$-pseudodifferential operators, and
decay and smoothness properties of eigenfunctions of localization operators.
Appendix contains the proofs of two auxiliary technical results.

\par

\subsection{Notation} We denote the Euclidean scalar product on $\Ren$ by $xy\coloneqq x\cdot y$ and the Euclidean norm by $\abs{x}\coloneqq\sqrt{x\cdot x}$. We put $\bN_0\coloneqq\bN\cup\{0\}$. $A\lesssim B$ means that for given constants $A$ and $B$ there exists a constant $c>0$ independent of $A$ and $B$ such that $A\leq cB$, and we write $A\asymp B$ if both $A\lesssim B$ and $B\lesssim A$.
We define the involution $g^*$ of a function $g$ by $g^*(t) \coloneqq \overline{g(-t) }$.
Given a function $f$ on $\rd$ its Fourier transform is normalized to be
\[
\Fur f(\o)=\hf(\o)\coloneqq \int_{\rd} e^{-2\pi i x\o} f(x)\, dx,\qquad\o\in\rd.
\]

Given two spaces $A$ and $B$, we denote by $A\hookrightarrow B$ the continuous embedding of $A$ into $B$. $\sch(\Ren)$ denotes the Schwartz class and its topological dual, the space of tempered distributions, is indicated by  $\sch'(\Ren)$. By the brackets  $\la f,g\ra$ we mean the extension  of the $L^2$-inner product $\la f,g\ra\coloneqq\int f(t){\overline {g(t)}}\,dt$ to any dual pair.

Consider $0<p< \infty$ and a positive and measurable function $m$  on $\rd$, then $L^p_m(\rd)$ denotes the (quasi-)Banach space of measurable functions $f: \rd \to \bC$ such that 
$$\|f\|_{L^p_m}\coloneqq\left(\intrd |f(x)|^p m(x)^p\, dx\right)^{1/p}<+\infty,$$
modulus the equivalence relation $f\sim g$ $\Leftrightarrow$ $f(x)=g(x)\,$ for  a.e. $x$.
When $p=\infty$, $f\in L^\infty_m(\rd)$ if $\|f\|_{L^\infty_m }\coloneqq \; \text{ess sup}\;_{x\in\rd} |f(x)| m(x)<+\infty$, up to the equivalence relation defined above. If $ m\equiv 1,$ we use abbreviated notation $  L^p (\rd) = L^p _1(\rd)$. If the restriction of $f$ to any compact set belongs to $L^p (\rd)$, then we write $f \in  L^p _{loc }(\rd)$.

For given Hilbert space $H$ and compact operator $T$ on $H$ its singular values $\{s_k(T)\}_{k=1}^\infty$ are the eigenvalues of $(T^*T)^{1/2}$, which is a positive and self-adjoint operator. The Schatten class $S_p(H)$, with $0<p<\infty$, is the set of all compact operators on $H$ such that their singular values are in $\ell^p$. For consistency, we define $S_\infty(H)\coloneqq B(H)$, the set of all linear and bounded operators on $H$. We shall deal with $H=L^2(\rd)$.

By $\sigma_P(T)$  we denote the \emph{point spectrum} of the operator $T$. If $T$ is a compact mapping on $\lrd$ then the spectral theory for compact operators yields $\sigma(T)\smallsetminus\{0\}=\sigma_P(T)\smallsetminus\{0\}$, where $\sigma(T)$ is the spectrum of the operator. For compact operators on $\lrd$ we have $ 0 \in \sigma(T)$,  and  the point spectrum $\sigma_P(T)\smallsetminus\{0\}$ (possibly empty) is at most a countable set.

A function $f\in\lrd\smallsetminus\{0\}$ is  an \emph{eigenfunction} of the operator $T$ if there exists $\lambda\in \bC$ such that
$ T f=\lambda f.$ We are interested in the properties of eigenfuctions  of $\aaf$ related to eigenvalues $\lambda\in \sigma_P(\aaf)\smallsetminus\{0\}$, whenever $ \sigma_P(\aaf)\smallsetminus\{0\}\not=\emptyset$.

\section{Preliminaries}

In this section we collect background material and prove some auxiliary results.

\subsection{Weight functions}
By \textit{weight} $m$ on $\rd$ (or on $\zd$) we mean a positive function $m>0$ such that $m\in L^{\infty}_{loc}(\rd)$ and $1/m\in L^{\infty}_{loc}(\rd)$. A weight $m$ is said to be \textit{submultiplicative} if it is even and
\begin{equation*}
	m(x+y)\leq m(x)m(y),\qquad\forall\,x\in\rd.
\end{equation*}
Given a weight $m$ on $\rd$ and a positive function $v\in L^{\infty}_{loc}(\rd)$, we say that $m$ is $v$-\textit{moderate} if
\begin{equation*}
	m(x+y)\lesssim v(x)m(y),\qquad\forall\,x,y\in\rd.
\end{equation*}
Therefore submultiplicative weights are moderate and the previous inequality implies the following estimates:
\begin{equation*}\label{Eq-ConsequenceOfModerateness}
	v(-x)^{-1}\lesssim m(x)\lesssim v(x),\qquad \forall\,x\in\rd.
\end{equation*}
For a submultiplicative weight $v$ there are convenient ways to find smooth weights $v_0$ which are equivalent to $v$ in the sense that there is a constant $C>0$ such that
$$
C^{-1} v_0 \leq  v \leq C v_0,
$$
see e.g.\cite{CordRod2020, Grochenig_2007_Weight, Toft_2012_Bergmann}.

Next we introduce some weights which will be used in the sequel. Given $k,\ga>0$ we define
\begin{equation*}\label{Eq-(sub-)exp}
	w^\ga_k(x)\coloneqq e^{k\abs{x}^{1/\ga}},\qquad x\in\rd.
\end{equation*}
Sometimes we shall use the above expression for $k=0$ also, with obvious meaning. If $\ga > 1$ the above functions are called subexponential weights, and when $\ga=1$ we write $w_k$ instead of $w^1_k$. Note that (sub-)exponential weights $w^\ga_k$ are submultiplicative (this follows from \eqref{Eq-useful-ineq-1}).  When $0 <\ga<1$ we obtain weights of super-exponential growth at infinity. We shall work with the following weight classes defined for $\ga>0$:
\begin{align*}
	\mathscr{P}_E(\rd)&\coloneqq\{m\,\text{weight on}\,\rd\,|\, m\,\text{is $v$-moderate for some submultiplicative $v$} \},\\
	\mathscr{P}_{E,\ga}(\rd)&\coloneqq\{m\,\text{weight on}\,\rd\,|\,m\,\text{is $w^\ga_k$-moderate for some $k>0$}\},\\
	\mathscr{P}^0_{E,\ga}(\rd)&\coloneqq\{m\,\text{weight on}\,\rd\,|\,m\,\text{is $w^\ga_k$-moderate for every $k>0$}\}.
\end{align*}
For $0<\ga_2<\ga_1$ we have
\begin{equation*}
	\mathscr{P}^0_{E,\ga_1}\subseteq\mathscr{P}_{E,\ga_1}\subseteq\mathscr{P}^0_{E,\ga_2}\subseteq\mathscr{P}_E.
\end{equation*}
Moreover, for $0<\ga<1$ we have $\mathscr{P}_E=\mathscr{P}_{E,\ga}=\mathscr{P}^0_{E,\ga}$; see \cite[Remark 2.6]{CappToft2016} and \cite{Toft19}. In the next lemma we show that if $m\in\mathscr{P}_E$, then it is $w_k$-moderate fore some $k>0$ large enough. This implies $\mathscr{P}_E=\mathscr{P}_{E,1}$.

\begin{lemma}
	Let $m\in\mathscr{P}_E$. Then $m$ is $w_k$-moderate fore some $k>0$.
\end{lemma}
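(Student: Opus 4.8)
The plan is to reduce the statement to the classical fact that every submultiplicative weight on $\rd$ is dominated by an exponential weight of the form $w_k(x)=e^{k\abs{x}}$. By hypothesis $m$ is $v$-moderate for some submultiplicative $v$, i.e. $m(x+y)\lesssim v(x)m(y)$ for all $x,y\in\rd$. Hence it suffices to produce constants $k>0$ and $C>0$ with $v(x)\le C\,w_k(x)$ for all $x$: combining the two estimates immediately gives $m(x+y)\lesssim v(x)m(y)\le C\,w_k(x)m(y)$, which is precisely the $w_k$-moderateness of $m$. So the whole argument concentrates on the pointwise bound for $v$.

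To bound $v$, I would first exploit local boundedness. Since $v$ is a weight it lies in $L^\infty_{loc}(\rd)$, so $M\coloneqq\sup_{\abs{x}\le 1}v(x)<\infty$; moreover evenness together with $v(0)\le v(0)^2$ forces $v(0)\ge 1$ and then $v(x)^2=v(x)v(-x)\ge v(0)\ge 1$, so $v\ge 1$ everywhere and in particular $M\ge 1$. The key step is a dyadic/subadditive argument: for $x\neq 0$ pick $n\in\bN$ with $n-1<\abs{x}\le n$ and write $x$ as the sum of $n$ copies of $x/n$, each of norm $\le 1$. Iterating submultiplicativity yields $v(x)\le v(x/n)^n\le M^n\le M^{\abs{x}+1}=M\,e^{(\log M)\abs{x}}$, where the last inequality uses $M\ge 1$ and $n\le\abs{x}+1$. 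Choosing $C=M$ and $k=\max\{\log M,1\}$ gives $v(x)\le C\,w_k(x)$, which completes the reduction and hence proves the lemma (and, as noted after the statement, identifies $\mathscr{P}_E=\mathscr{P}_{E,1}$).

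The only genuinely substantive point is the passage from the multiplicative inequality $v(a+b)\le v(a)v(b)$ to linear growth of $\log v$, i.e. the sub-exponential-from-submultiplicative estimate; once local boundedness of $v$ on the unit ball is in hand this is the standard Fekete-type subadditivity argument and presents no real obstacle. The verification that $w_k$ is itself submultiplicative (so that the target class $\mathscr{P}_{E,1}$ is well defined) is the triangle-inequality fact already recorded in the paper for $w^\ga_k$ and need not be redone here.
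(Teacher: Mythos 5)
Your argument is correct and is essentially the paper's proof: both rest on splitting $x$ into $n$ pieces of norm at most $1$ (with $n-1<\abs{x}\le n$), iterating the (sub)multiplicative inequality, and bounding the resulting factor by $M^n\le M^{\abs{x}+1}$; the only difference is cosmetic, in that you iterate submultiplicativity on $v$ and then compose with moderateness, while the paper iterates the moderateness inequality on $m$ directly. One small point of care: $v\in L^\infty_{loc}(\rd)$ only gives an \emph{essential} supremum over the unit ball, so to write $M=\sup_{\abs{x}\le 1}v(x)<+\infty$ you should either first replace $v$ by an equivalent continuous weight $v_0$ (as the paper does, invoking the standard regularization of submultiplicative weights) or observe that submultiplicativity upgrades the essential bound to a pointwise one, since for $\abs{x}\le 1$ one can pick $y$ in a set of full measure with $v(x)\le v(y)v(x-y)\le M^2$.
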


\begin{proof}
The lemma is folklore (\cite{Grochenig_2007_Weight, CappToft2016, Toft-ImageBargmann-2017,ToftquasiBanach2017}).
For the sake of completeness we report a self-contained proof following \cite{Grochenig_2007_Weight}.
By the hypothesis, we may assume that $m$ is moderate with respect to some continuous $v_0>0$: $m(x+y)\leq C v_0(x)m(y)$, $x,y\in\rd$. It follows that $\sup_{\abs{t}\leq1}C v_0(t)=e^a$ for some $a\in\mathbb{R}$. For any given $x,y\in\rd$ we choose $n\in\bN$ such that $n-1<\abs{x}\leq n$. Then
\begin{multline*}
m(x+y) = m\left(n\frac{x}{n}+y\right)  \leq C v_0\left(\frac{x}{n}\right)m\left((n-1)\frac{x}{n}+y\right)\\
\leq C^2v_0^2 \left(\frac{x}{n}\right) m\left((n-2)\frac{x}{n}+y\right) \leq \dots\\
\leq\left(Cv_0\left(\frac{x}{n}\right)\right)^n m(y) \leq e^{an}m(y)\\
<e^{a(\abs{x}+1)}m(y)=e^a e^{a\abs{x}}m(y), \;\;\; 			 x,y\in\rd.
\end{multline*}
The claim follows for $k>\max(0,a)$.
\end{proof}

\par

We remark that $\mathscr{P}_E$ contains the weights of \textit{polynomial type}, i.e. weights moderate with respect to some polynomial.

In the sequel $\mathscr{P}^\ast_{E,\ga}$ means $\mathscr{P}_{E,\ga}$ or $\mathscr{P}^{0}_{E,\ga}$. The following lemma follows by easy calculations and we leave the proof for the reader (see also \cite{Toft_2012_Bergmann}). Observe that due to the equality $\mathscr{P}_{E,1}=\mathscr{P}_{E,\ga}=\mathscr{P}^0_{E,\ga}$, $0<\ga<1$, it is sufficient to consider $\ga\geq 1$.

\begin{lemma}\label{Lem-Weight1}
Consider $\ga>0$. Then $\mathscr{P}^{\ast}_{E,\ga}(\rd)$ is a group under the pointwise multiplication and with the identity $m\equiv 1$.
\end{lemma}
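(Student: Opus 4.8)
The plan is to verify the group axioms, observing that pointwise multiplication is automatically commutative and associative, so the entire content reduces to three closure statements: that the constant weight $m\equiv 1$ lies in $\mathscr{P}^{\ast}_{E,\ga}$, that the class is closed under products, and that it is closed under pointwise reciprocals. Each of these is a statement about $w^\ga_k$-moderateness, and I would reduce everything to two elementary properties of the reference weights $w^\ga_k$. For the identity, note that $m\equiv 1$ is trivially $w^\ga_k$-moderate for every $k>0$, since $w^\ga_k\geq 1$; hence $m\equiv 1$ belongs to both $\mathscr{P}_{E,\ga}$ and $\mathscr{P}^0_{E,\ga}$, and it is clearly the multiplicative identity.

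The two properties of $w^\ga_k$ that do the work are the semigroup identity $w^\ga_{k_1}(x)\,w^\ga_{k_2}(x)=w^\ga_{k_1+k_2}(x)$, immediate from $e^{k_1\abs{x}^{1/\ga}}e^{k_2\abs{x}^{1/\ga}}=e^{(k_1+k_2)\abs{x}^{1/\ga}}$, and the evenness $w^\ga_k(-x)=w^\ga_k(x)$. For closure under products, suppose $m_1$ is $w^\ga_{k_1}$-moderate and $m_2$ is $w^\ga_{k_2}$-moderate. Multiplying the two moderateness estimates and applying the semigroup identity gives $(m_1m_2)(x+y)\lesssim w^\ga_{k_1+k_2}(x)\,(m_1m_2)(y)$, so $m_1m_2$ is $w^\ga_{k_1+k_2}$-moderate. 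For the class $\mathscr{P}_{E,\ga}$ (moderateness for \emph{some} $k$) this closes immediately; for $\mathscr{P}^0_{E,\ga}$ (moderateness for \emph{every} $k$) I would, given an arbitrary $k>0$, apply the estimate with $k_1=k_2=k/2$ to conclude that $m_1m_2$ is $w^\ga_k$-moderate, establishing the required property for all $k$.

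For closure under reciprocals I would use evenness. Starting from $m(x+y)\lesssim w^\ga_k(x)\,m(y)$ and replacing $x$ by $-x$ and $y$ by $x+y$ yields $m(y)\lesssim w^\ga_k(-x)\,m(x+y)=w^\ga_k(x)\,m(x+y)$; rearranging shows that $1/m$ is again $w^\ga_k$-moderate, with the \emph{same} $k$. Consequently reciprocals stay in $\mathscr{P}_{E,\ga}$ and, $k$ being arbitrary, also in $\mathscr{P}^0_{E,\ga}$. The only remaining bookkeeping is to confirm that $m_1m_2$ and $1/m$ are genuine weights in the sense of the definition, i.e.\ positive and locally bounded together with their reciprocals; this is immediate, since products and reciprocals of positive functions in $L^\infty_{loc}$ whose reciprocals also lie in $L^\infty_{loc}$ retain these properties.

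There is no real obstacle here---the argument is entirely formal once the semigroup identity and the evenness of $w^\ga_k$ are isolated. The one point that demands a little care is handling the ``some $k$'' versus ``every $k$'' quantifier uniformly across the two cases $\mathscr{P}_{E,\ga}$ and $\mathscr{P}^0_{E,\ga}$, and the splitting $k=k/2+k/2$ in the product step settles exactly this. I expect the write-up to be short, which is consistent with the authors' remark that the statement follows by easy calculations.
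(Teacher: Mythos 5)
Your proof is correct and complete; the paper itself omits the argument, stating only that the lemma ``follows by easy calculations,'' and your verification (the semigroup identity $w^\ga_{k_1}w^\ga_{k_2}=w^\ga_{k_1+k_2}$ for closure under products, with the $k/2+k/2$ splitting to handle the ``for every $k$'' class, and evenness of $w^\ga_k$ for closure under reciprocals) is exactly the intended routine computation. Nothing is missing: you also correctly check that the identity $m\equiv 1$ lies in both classes and that products and reciprocals remain weights in the sense of the definition.
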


Given a function $f$ defined on $\rdd$ we denote its restrictions to $\rd\times\{0\}$ and $\{0\}\times\rd$ as follows:
\begin{equation}\label{Eq-Restrictions}
	f|_1(x)\coloneqq f(x,0),\qquad f|_2(\o)\coloneqq f(0,\o),\qquad x,\o\in\rd.
\end{equation}
Given two functions $g,h$ defined on $\rd$  their \textit{tensor product} is the function on $\rdd$ defined in the following manner:
\begin{equation*}\label{Eq-TensorProducts}
	g\otimes h\phas\coloneqq g(x)h(\o),\qquad\phas\in\rdd.
\end{equation*}
The families $\mathscr{P}^{\ast}_{E,\ga}$ turn out to be closed under restrictions and tensor products in the sense of the following lemma.
The proof is omitted, since it follows from definitions and properties of the Euclidean norm.

\par
\begin{lemma}\label{Lem-Weight2} Consider $\ga>0$:
	\begin{enumerate}[label=(\roman*)]
	\item if $m\in\mathscr{P}^{\ast}_{E,\ga}(\rdd)$, then $m|_1,m|_2\in\mathscr{P}^{\ast}_{E,\ga}(\rd)$;
	\item if $m,w\in\mathscr{P}^{\ast}_{E,\ga}(\rd)$, then $m\otimes w\in\mathscr{P}^{\ast}_{E,\ga}(\rdd)$.
	\end{enumerate}
\end{lemma}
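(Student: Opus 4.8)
The plan is to reduce both statements to the behaviour of the reference weights $w^\ga_k$ under restriction to a coordinate subspace and under tensorisation, relying only on two elementary facts about the Euclidean norm on $\rdd$: that $|(x,0)|=|x|=|(0,x)|$, and that $\max(|x|,|\o|)\leq|\phas|$. I would first note that every weight in $\mathscr{P}^\ast_{E,\ga}$ is equivalent to a continuous one, so I may fix a continuous representative of $m$; then the pointwise restrictions $m|_1,m|_2$ are genuinely defined. Positivity of $m|_i$ is inherited directly from $m>0$, and (as I show below) $m|_i$ satisfies a moderateness estimate with the locally bounded weight $w^\ga_k$, which forces $m|_i^{\pm1}\in L^\infty_{loc}(\rd)$; hence $m|_1,m|_2$ are weights in the sense of the paper.

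For part (i), I would start from the hypothesis that $m$ is $w^\ga_k$-moderate on $\rdd$, i.e. $m(z+z')\lesssim w^\ga_k(z)\,m(z')$ for all $z,z'\in\rdd$, and restrict both arguments to $\rd\times\{0\}$. Using $w^\ga_k(x,0)=e^{k|(x,0)|^{1/\ga}}=e^{k|x|^{1/\ga}}=w^\ga_k(x)$, this gives
\begin{equation*}
m|_1(x+x')=m\bigl((x,0)+(x',0)\bigr)\lesssim w^\ga_k(x,0)\,m(x',0)=w^\ga_k(x)\,m|_1(x'),
\end{equation*}
so that $m|_1$ is $w^\ga_k$-moderate on $\rd$ with the \emph{same} exponent $k$; the argument for $m|_2$ is identical on $\{0\}\times\rd$. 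Because $k$ is preserved verbatim, the conclusion holds both when moderateness is required ``for some $k$'' ($\mathscr{P}_{E,\ga}$) and ``for every $k$'' ($\mathscr{P}^0_{E,\ga}$).

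For part (ii), let $m,w$ be $w^\ga_k$-moderate on $\rd$; replacing $k$ by the larger of the two exponents (and using $w^\ga_{k_1}\leq w^\ga_{k}$ for $k_1\leq k$) I may take a common $k$. Multiplying the two moderateness estimates yields
\begin{equation*}
(m\otimes w)\bigl(\phas+(x',\o')\bigr)=m(x+x')\,w(\o+\o')\lesssim w^\ga_k(x)\,w^\ga_k(\o)\,(m\otimes w)(x',\o').
\end{equation*}
The one genuinely non-bookkeeping step is the comparison $w^\ga_k(x)w^\ga_k(\o)=e^{k(|x|^{1/\ga}+|\o|^{1/\ga})}\leq e^{2k|\phas|^{1/\ga}}=w^\ga_{2k}\phas$, which rests on $|x|^{1/\ga}+|\o|^{1/\ga}\leq2\max(|x|,|\o|)^{1/\ga}\leq2|\phas|^{1/\ga}$. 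Hence $m\otimes w$ is $w^\ga_{2k}$-moderate on $\rdd$, which settles the $\mathscr{P}_{E,\ga}$ case at once; for $\mathscr{P}^0_{E,\ga}$ I would run the identical computation starting from $k=k'/2$ for an arbitrary prescribed $k'>0$, thereby producing $w^\ga_{k'}$-moderateness for every $k'$.

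The only point that goes beyond elementary manipulation is the inequality $|x|^{1/\ga}+|\o|^{1/\ga}\leq2|\phas|^{1/\ga}$, and this is exactly where I expect the (very mild) difficulty to sit. It is harmless because it holds for \emph{every} $\ga>0$ — it uses nothing but monotonicity of $t\mapsto t^{1/\ga}$ together with $\max(|x|,|\o|)\leq|\phas|$ — so the extra factor $2$ appearing in the exponent is absorbed either into the existential quantifier of $\mathscr{P}_{E,\ga}$ or, in the $\mathscr{P}^0_{E,\ga}$ case, by halving the target exponent in advance.
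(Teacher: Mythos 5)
Your proof is correct and is exactly the argument the paper has in mind: the paper omits the proof with the remark that it ``follows from definitions and properties of the Euclidean norm,'' and your two key observations --- that $w^\ga_k(x,0)=w^\ga_k(x)$ for the restriction, and that $|x|^{1/\ga}+|\o|^{1/\ga}\leq 2|\phas|^{1/\ga}$ for the tensor product, with the factor $2$ absorbed by the existential or universal quantifier on $k$ --- are precisely those properties. The extra care you take with local boundedness of $m|_i^{\pm1}$ and with fixing a continuous representative is harmless and slightly more thorough than the paper requires.
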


Next we exhibit a lemma which will play a key role in the sequel, see Proposition \ref{Pro-SmoothnessEigenfuctionsWeyl}.
The proof is given in the appendix.

\begin{lemma}\label{Lem-condition(sub-)exp-verified}
	Consider $\ga\geq1,\,r,s\geq0$, $\tau\in[0,1]$ and
	\begin{equation}\label{Eq-condition-on-t}
		t\geq
		\begin{cases}
		r+s\tau^{1/\ga}\qquad\qquad&\text{if}\qquad1/2\leq\tau\leq1,\\
		r+s(1+\tau^2)^{1/2\ga}\quad&\text{if}\qquad0\leq\tau<1/2.
		\end{cases}
	\end{equation}
	Then for every $x,\o,y,\eta\in\rd$ the following estimate holds true:
	\begin{equation}\label{Eq-condition(sub-)exp-verified}
		\frac{w^\ga_{r+s}\phas}{w^{\ga}_r(y,\eta)}\leq w^\ga_s\otimes w^\ga_t\Big(\big((1-\tau)x+\tau y,\tau\o+(1-\tau)\eta\big),\big(\o-\eta,y-x\big)\Big).
	\end{equation}
\end{lemma}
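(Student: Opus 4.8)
The plan is to pass to logarithms and thereby reduce \eqref{Eq-condition(sub-)exp-verified} to the additive estimate
\begin{equation*}
(r+s)\abs{(x,\o)}^{1/\ga}\leq r\abs{(y,\eta)}^{1/\ga}+s\abs{\big((1-\tau)x+\tau y,\tau\o+(1-\tau)\eta\big)}^{1/\ga}+t\abs{(\o-\eta,y-x)}^{1/\ga},
\end{equation*}
which is equivalent because $w^\ga_k(z)=e^{k\abs z^{1/\ga}}$ and the tensor product on the right-hand side simply adds the two exponents. To lighten the notation I write $X=(x,\o)$, $Y=(y,\eta)$, $P=\big((1-\tau)x+\tau y,\tau\o+(1-\tau)\eta\big)$ and $Q=(\o-\eta,y-x)$ for the four vectors of $\rdd$ involved.

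Two elementary geometric facts drive the argument. First, $\abs Q=\abs{X-Y}$: swapping and negating coordinates does not change the Euclidean norm, so $\abs Q^2=\abs{\o-\eta}^2+\abs{x-y}^2=\abs{X-Y}^2$. Second, a direct computation gives $X-P=\big(\tau(x-y),(1-\tau)(\o-\eta)\big)$, whence
\begin{equation*}
\abs{X-P}^2=\tau^2\abs{x-y}^2+(1-\tau)^2\abs{\o-\eta}^2\leq c_\tau^2\,\abs Q^2 ,
\end{equation*}
where I take $c_\tau=\tau$ for $1/2\leq\tau\leq1$ (using $\max\{\tau^2,(1-\tau)^2\}=\tau^2$) and $c_\tau=(1+\tau^2)^{1/2}$ for $0\leq\tau<1/2$ (using the cruder termwise bounds $\tau^2\leq1+\tau^2$ and $(1-\tau)^2\leq1+\tau^2$). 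These two choices are precisely what produce the two cases of the hypothesis \eqref{Eq-condition-on-t}.

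With these facts in hand I would apply \eqref{Eq-useful-ineq-1}, i.e. $\abs{u+v}^{1/\ga}\leq\abs u^{1/\ga}+\abs v^{1/\ga}$ (valid for $\ga\geq1$, which is the only place the assumption $\ga\geq1$ enters), to the two splittings $X=Y+(X-Y)$ and $X=P+(X-P)$. This gives $\abs X^{1/\ga}\leq\abs Y^{1/\ga}+\abs Q^{1/\ga}$ and $\abs X^{1/\ga}\leq\abs P^{1/\ga}+\abs{X-P}^{1/\ga}\leq\abs P^{1/\ga}+c_\tau^{1/\ga}\abs Q^{1/\ga}$. Multiplying the first inequality by $r$, the second by $s$, and adding yields
\begin{equation*}
(r+s)\abs X^{1/\ga}\leq r\abs Y^{1/\ga}+s\abs P^{1/\ga}+\big(r+s\,c_\tau^{1/\ga}\big)\abs Q^{1/\ga},
\end{equation*}
and since $c_\tau^{1/\ga}$ equals $\tau^{1/\ga}$ or $(1+\tau^2)^{1/2\ga}$ according to the case, the coefficient $r+s\,c_\tau^{1/\ga}$ is at most $t$ exactly by \eqref{Eq-condition-on-t}. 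Exponentiating recovers \eqref{Eq-condition(sub-)exp-verified}.

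The individual steps are routine; the only genuine decisions are the bookkeeping that pairs the $r$-part of the weight with the translate $Y$ and the $s$-part with the interpolated point $P$, and the choice of the constant $c_\tau$ in $\abs{X-P}\leq c_\tau\abs Q$ so that the resulting threshold $r+s\,c_\tau^{1/\ga}$ lands exactly on the two-case bound in \eqref{Eq-condition-on-t}. I do not anticipate any real obstacle beyond keeping the case distinction $\tau\geq1/2$ versus $\tau<1/2$ consistent throughout.
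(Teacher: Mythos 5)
Your proposal is correct and follows essentially the same route as the paper's proof: both rest on the subadditivity $\abs{u+v}^{1/\ga}\leq\abs{u}^{1/\ga}+\abs{v}^{1/\ga}$ (the paper's \eqref{Eq-useful-ineq}/\eqref{Eq-useful-ineq-1}) applied to the two decompositions $X=Y+(X-Y)$ and $X=P+(X-P)$, together with the identical two-case bound $\abs{(\tau z,(1-\tau)w)}\leq c_\tau\abs{(z,w)}$ (the paper's \eqref{Eq-useful-ineq-2}). Your presentation merely reorganizes the paper's chain of exponential estimates into the equivalent additive form before exponentiating.
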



We finish this subsection by introducing some polynomial weights which will be used in Theorem \ref{Th-Schatten-tau-pseudo} and Lemma \ref{Lem-inclusion-for-compactness}. Let $\tau\in[0,1]$ and  $u\geq0$, then we define  the weights of polynomial type
\begin{align}
v_u\phas&\coloneqq\<\phas\>^u=(1+\abs{\phas}^2)^{u/2}, \;\;\; \phas \in\rdd,
\label{Eq-DefPolynomialWeights}\\
m^\tau_u(\phas,(y,\eta))&\coloneqq(1+\abs{x-\tau\eta}+\abs{\o+(1-\tau)y})^u,
\;\;\; \phas,(y,\eta)\in\rdd.
\label{Eq-definition-m-u-tau}
\end{align}

\begin{remark}\label{Rem-m-tau_u-lesssim-v_u}
If $v_u $ and $m^\tau_u$ are given by \eqref{Eq-DefPolynomialWeights} and \eqref{Eq-definition-m-u-tau} respectively,
then we notice that
	\begin{equation*}
	m^\tau_u(\phas,(y,\eta))\lesssim v_u\otimes v_u(\phas,(y,\eta)),\;\;\; \forall\,\phas,(y,\eta)\in\rdd.
	\end{equation*}
	which will be used in Lemma \ref{Lem-inclusion-for-compactness}. Indeed:
	\begin{multline*}
	m^\tau_u(\phas,(y,\eta))=(1+\abs{x-\tau\eta}+\abs{\o+(1-\tau)y})^u\\
\lesssim(1+(\abs{x}+\abs{\tau\eta})^2+(\abs{\o}+\abs{(1-\tau)y})^2)^{u/2}\\
\lesssim(1+\abs{x}^2+\tau^2\abs{\eta}^2+\abs{\o}^2+(1-\tau)^2\abs{y}^2)^{u/2}
\lesssim
(1+\abs{\phas}^2+\abs{(y,\eta)}^2)^{u/2}\\
\leq(1+\abs{\phas}^2+\abs{(y,\eta)}^2+\abs{\phas}^2\abs{(y,\eta)}^2)^{u/2}\\
=(1+\abs{\phas}^2)^{u/2}(1+\abs{(y,\eta)}^2)^{u/2}
= v_u\otimes v_u(\phas,(y,\eta)).
	\end{multline*}
\end{remark}

\subsection{Spaces of sequences}
Given $0<p,q\leq \infty$ and $m\in \mathscr{P}_E(\zdd)$, $\ell^{p,q}_m(\zdd)$ is the set of all sequences $a=(a_{k,n})_{k,n\in\zd}$ such that the (quasi-)norm
$$\|a\|_{\ell^{p,q}_m}\coloneqq\left(\sum_{n\in\zd}\left(\sum_{k\in\zd}|a_{k,n}|^p m(k,n)^p\right)^{\frac qp}\right)^{\frac 1q}
$$
(with obvious changes for $p=\infty$ or $q=\infty$) is finite.\\
When $p=q$ we recover the standard spaces of sequences $\ell^{p,p}_m(\zdd)=\ell^p_m(\zdd)$.\\
In the following proposition we collect some properties that we shall use later on, see \cite{Galperin2014,Galperin2004}.
\begin{proposition}
\begin{itemize}
	\item[(i)] \emph{Inclusion relations}: Consider $0<p_1\leq p_2 \leq\infty$ and let $m$ be any positive weight function on $\zd$. Then
	\begin{equation*}
		\ell^{p_1}_m(\zd)\hookrightarrow\ell^{p_2}_m(\zd).
	\end{equation*}
	\item[(ii)] \emph{Young's convolution inequality}: Consider $m,v\in\mathscr{P}_E(\zd)$ such that $v$ is submultiplicative and $m$ is $v$-moderate, $0<p,q,r\leq \infty$ with
	\begin{equation*}\label{Yrgrande1}
	\frac1p+\frac1q=1+\frac1r, \quad \mbox{for}\quad 1\leq r\leq \infty
	\end{equation*}
	and
	\begin{equation*}\label{Yrminor1}
	p=q=r, \quad \mbox{for}\quad 0<r<1.
	\end{equation*}
	Then for all $a\in \ell^p_m(\zd),\,b\in\ell^q_v(\zd)$, we have $a\ast b\in \ell^r_m(\zd)$, with
	\begin{equation*}
	\|a\ast b\|_{\ell^r_m}\leq C \|a\|_{\ell^p_m}\|b\|_{\ell^q_v},
	\end{equation*}
	where the constant $C>0$ is independent of $p,q,r$, $a$ and $b$. If $m\equiv v\equiv1$, then $C=1$.
	\item[(iii)] \emph{H\"{o}lder's inequality}: Let $m$ be any positive weight function on $\zd$ and $0<p,q,r\leq \infty$ such that $1/p+1/q=1/r$. Then
	\begin{equation*}\label{ptwellp}
	\ell^p_m (\zd)\cdot \ell^q_{1/m}(\zd)\hookrightarrow \ell^r(\zd).
	\end{equation*}
\end{itemize}
\end{proposition}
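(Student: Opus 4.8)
The plan is to reduce each weighted statement to its unweighted counterpart on $\zd$ by absorbing the weight into the sequence, using the identity $\|a\|_{\ell^p_m}=\|(a_k\,m(k))_k\|_{\ell^p}$, and then to invoke elementary inequalities for the plain spaces $\ell^p(\zd)$; throughout I write $(a\ast b)(k)=\sum_{n\in\zd}a(n)b(k-n)$. For (i) I would first prove $\ell^{p_1}(\zd)\hookrightarrow\ell^{p_2}(\zd)$ for $0<p_1\le p_2\le\infty$: by homogeneity it suffices to treat $\|a\|_{\ell^{p_1}}=1$, whence $|a_k|\le1$ for every $k$ and therefore $|a_k|^{p_2}\le|a_k|^{p_1}$ since $p_2\ge p_1$; summing gives $\|a\|_{\ell^{p_2}}\le\|a\|_{\ell^{p_1}}$ (with the obvious modification when $p_2=\infty$). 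The weighted embedding follows by applying this to $(a_k\,m(k))_k$.

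For (ii) I would distinguish the two regimes of $r$, the common ingredient being the $v$-moderateness of $m$, which gives $m(k)\lesssim v(k-n)\,m(n)$ for all $k,n\in\zd$. In the Banach range $1\le r\le\infty$ this produces the pointwise bound $|(a\ast b)(k)|\,m(k)\lesssim\big((|a|\,m)\ast(|b|\,v)\big)(k)$, after which the classical unweighted Young inequality applied to $|a|\,m\in\ell^p(\zd)$ and $|b|\,v\in\ell^q(\zd)$ under $1/p+1/q=1+1/r$ closes the argument, with a constant independent of the exponents. In the quasi-Banach range $0<r<1$ with $p=q=r$ the usual Minkowski/duality machinery is unavailable, and I would instead invoke the elementary inequality $\big|\sum_i x_i\big|^r\le\sum_i|x_i|^r$, valid for $0<r\le1$; combined with the same moderateness estimate and Tonelli's theorem it yields $\sum_k\big(|(a\ast b)(k)|\,m(k)\big)^r\lesssim\|a\|_{\ell^r_m}^r\,\|b\|_{\ell^r_v}^r$, which is precisely the asserted bound. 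In both computations the value $C=1$ for $m\equiv v\equiv1$ is read off directly.

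For (iii) I would again pass to the unweighted setting by writing $a(k)b(k)=\big(a(k)m(k)\big)\big(b(k)/m(k)\big)$, so that $\|ab\|_{\ell^r}=\|(am)(b/m)\|_{\ell^r}$ with $am\in\ell^p(\zd)$ and $b/m\in\ell^q(\zd)$. Since $1/p+1/q=1/r$ forces $p/r$ and $q/r$ to be conjugate exponents, the generalized Hölder inequality applied to $|am|^r$ and $|b/m|^r$ gives $\|(am)(b/m)\|_{\ell^r}\le\|am\|_{\ell^p}\|b/m\|_{\ell^q}=\|a\|_{\ell^p_m}\|b\|_{\ell^q_{1/m}}$, the desired embedding; this step is insensitive to whether $r\ge1$ or $r<1$.

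The only genuinely delicate point is the quasi-Banach range $0<r<1$ in (ii): here neither Minkowski's inequality nor duality is available, and the argument rests instead on the concavity inequality $(\sum_i|x_i|)^r\le\sum_i|x_i|^r$ that substitutes for the triangle inequality. I expect this to be the main obstacle; once the weights have been absorbed, the remaining cases reduce to standard, well-documented facts about $\ell^p$-spaces.
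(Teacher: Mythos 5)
Your proof is correct. Note first that the paper itself gives no proof of this proposition: it is stated as a collection of known facts with a pointer to Galperin (2014) and Galperin--Samarah (2004), so there is no in-paper argument to compare against. Your route is the standard one used in those references: absorb the weight via $\|a\|_{\ell^p_m}=\|(a_k m(k))_k\|_{\ell^p}$, reduce (i) and (iii) to the unweighted inclusion $\ell^{p_1}\hookrightarrow\ell^{p_2}$ and to H\"older with conjugate exponents $p/r$ and $q/r$, and in (ii) use $v$-moderateness in the form $m(k)\lesssim v(k-n)m(n)$ to pass to the unweighted Young inequality when $1\le r\le\infty$ and to the $r$-concavity inequality $(\sum_i|x_i|)^r\le\sum_i|x_i|^r$ together with Tonelli when $0<r<1$. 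All steps are sound, and your bookkeeping of the constant is right: in every case $C$ is exactly the moderateness constant in $m(x+y)\le C\,v(x)m(y)$, hence independent of $p,q,r,a,b$ and equal to $1$ when $m\equiv v\equiv 1$ (the unweighted discrete Young and H\"older inequalities carry constant $1$). The one point worth stating explicitly rather than leaving implicit is that in (iii) the hypothesis $1/p+1/q=1/r$ forces $p,q\ge r$, so $p/r$ and $q/r$ are genuine conjugate exponents $\ge 1$ and the classical H\"older inequality applies even when $r<1$; you assert this correctly but a referee would want the line $r/p+r/q=1$ written out.
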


\subsection{Gelfand-Shilov spaces} \label{subs:gsspace}

Let $h,\gamma,\tau > 0$ be fixed. Then $S _{\tau;h}^{\gamma}(\rr d)$
is the Banach space of all $f\in C^\infty (\rr d)$ such that
\begin{equation}\label{gfseminorm}
\| f \|_{S _{\tau;h}^{\gamma}}  \coloneqq
\sup_{p,q \in \mathbb{N} ^d _0} \sup_{x \in \rr d} \frac {|x^p \partial ^q 	f(x)|}{h^{|p|  + |q|} |p| !^{\tau} \, |q| !^\gamma}<+\infty,
\end{equation}
endowed with the norm \eqref{gfseminorm}.

\begin{definition} \label{de:gsspaces}
Let $\gamma,\tau > 0$. The Gelfand-Shilov spaces $\cS _{\tau}^{\gamma}(\rr d)$ and
$\Sigma _{\tau}^{\gamma}(\rr d)$ are defined as unions and intersections of $S _{\tau;h}^{\gamma}(\rr d)$ with respective
inductive and projective limit topologies:
\begin{equation*}\label{GSspacecond1}
\cS _{\tau} ^{\gamma}(\rr d) \coloneqq \bigcup _{h>0}S _{\tau;h}^{\gamma}(\rr d)
\quad \text{and} \quad \Sigma _{\tau}^{\gamma}(\rr d) \coloneqq\bigcap _{h>0}
S _{\tau;h}^{\gamma}(\rr d).
\end{equation*}
\end{definition}
Note that
$\Sigma _{\tau}^{\gamma}(\rr d)\neq \{ 0\}$ if and only if
$\tau +\gamma \ge 1$ and $(\tau,\gamma )\neq (1/2,1/2)$, and
$\cS _{\tau}^{\gamma}(\rr d)\neq \{ 0\}$ if and only
if $\tau+\gamma \ge 1$, see \cite{GS, Pi88}.
For every $\tau,\gamma, \ep >0$ we have
\begin{equation}\label{Eq:GSEmbeddings}
\Sigma _\tau ^\gamma (\rr d)
\hookrightarrow
\cS _\tau ^\gamma (\rr d)
\hookrightarrow
\Sigma _{\tau+\ep}^{\gamma +\ep}(\rr d)
\hookrightarrow
\cS (\rr d).
\end{equation}
If $\tau+\gamma \ge 1$, then the last two inclusions in \eqref{Eq:GSEmbeddings} are dense,
and if in addition $(\tau,\gamma )\neq (1/2,1/2)$
then the first inclusion in \eqref{Eq:GSEmbeddings} is dense.
Moreover, for $\gamma<1$ the elements of $\cS _{\tau}^{\gamma}(\rr d)$ can be extended to entire
functions on $\mathbb{C}^d$ satisfying suitable exponential bounds, \cite{GS}.\\
In the sequel we will also use the following notations:
\begin{equation*}
\cS^{(\gamma)} (\rr d) \coloneqq \Sigma _\gamma ^\gamma (\rr d),
\qquad
\cS^{\{\gamma\}} (\rr d) \coloneqq \cS_\gamma ^\gamma (\rr d)\;\;\; \text{and} \;\;\; \cS^{\ast} (\rr d),
\end{equation*}
where $ * $ stands for $ (\gamma) $ or $ \{\gamma\}.$

\begin{definition}
The \textit{Gelfand-Shilov distribution spaces} $(\cS _{\tau}^{\gamma})'(\rr d)$
and $(\Sigma _{\tau}^{\gamma})'(\rr d)$ are the projective and inductive limit
respectively of $(S _{\tau;h}^{\gamma})'(\rr d)$, the topological dual of  $S _{\tau;h}^{\gamma} (\rr d)$:
\begin{equation*}
\label{GSspacedistr}
(\cS _{\tau}^{\gamma})'(\rr d) \coloneqq \bigcap _{h>0}(S _{\tau;h}^{\gamma})'(\rr d)\quad
\text{and}\quad (\Sigma _{\tau}^{\gamma})'(\rr d) \coloneqq\bigcup _{h>0}(S _{\tau;h}^{\gamma})'(\rr d).
\end{equation*}
\end{definition}
It follows that $ \cS '(\rr d)\hookrightarrow
(\cS _\tau^\gamma)'(\rr d)$ when $\tau+\gamma \ge 1$, and if in addition
$(\tau,\gamma )\neq (1/2,1/2)$, then $(\cS _\tau^\gamma)'(\rr d)
\hookrightarrow (\Sigma _\tau^\gamma)'(\rr d)$.

\par

The Gelfand-Shilov spaces enjoy beautiful symmetric characterizations which also involve the Fourier transform of their elements.
The following result has been reinvented several times, in similar or analogous terms, see  \cite{CCK,GZ,KPP,NR}.

\begin{theorem} \label{simetrija} Let  $\gamma, \tau \geq 1/2$.
	The following conditions are equivalent:
	\begin{itemize}
		\item[(i)] $f \in {\mathcal S}^{\gamma}_{\tau} (\rr d)$ (resp. $ f \in \Sigma ^{\gamma} _{\tau} (\rr d) $);
		\item[(ii)]  There exist (resp. for every) constants $A,B>0$ such that
		$$ \| x^{p}  f (x)\|_{L^\infty} \lesssim A^{|p|} |p| ! ^{\tau}  \quad\mbox{and}\quad  \| \o^{q}  \hat{f} (\o) \|_{L^\infty} \lesssim B^{|q|} |q| ! ^{\gamma},
		\quad\forall p,q\in \mathbb{N} ^d _0; $$
		\item[(iii)]  There exist (resp. for every) constants  $A,B>0$ such that
		$$ \| x^{p}  f(x) \|_{L^\infty} \lesssim A^{|p|} |p| ! ^{\tau}  \quad\mbox{and}\quad  \| \partial^{q}  f (x)\|_{L^\infty} \lesssim B^{|q|} |q| !^{\gamma },
		\quad\forall p,q\in \mathbb{N} ^d_0; $$
		\item[(iv)] There exist (resp. for every) constants $h,k>0$ such that
		\begin{equation*} \label{condd}
		\|f (x) e^{h\abs{x}^{1/\tau}}\|_{L^\infty} <+\infty \quad\mbox{and}\quad \| \hat f (\o) e^{k\abs{\o}^{1/\gamma}}\|_{L^\infty} <+\infty;
		\end{equation*}
		\item[(v)] There exist (resp. for every) constants $h,B>0$ such that
		\begin{equation} \label{conde}
		\|(\partial^{q} f)(x)  e^{h \abs{x}^{1/\tau}}\|_{L^\infty} \lesssim B^{|q|} |q| ! ^{\gamma},\quad\forall q\in \mathbb{N} ^d_0.
		\end{equation}
	\end{itemize}
\end{theorem}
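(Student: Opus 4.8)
The plan is to establish the five conditions as equivalent by combining a handful of easy implications with one genuinely substantial step. Two devices drive everything: a scalar optimization estimate that exchanges factorial growth with subexponential decay, and the Fourier correspondence $\widehat{\partial^q f}=(2\pi i\o)^q\hat f$ together with its dual $\widehat{x^pf}=(i/2\pi)^{|p|}\partial^p\hat f$, under the normalization fixed in the Notation. The implication (i)$\Rightarrow$(iii) is free: restricting the defining seminorm \eqref{gfseminorm} to $q=0$ gives the first estimate of (iii), and to $p=0$ the second, both with $A=B=h$. The distinction between $\cS_\tau^\gamma$ (``there exist $A,B$'') and $\Sigma_\tau^\gamma$ (``for every $A,B$'') is respected by every step below, so I will not repeat it each time.

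The scalar tool is the elementary identity
$$\sup_{t\geq0}t^{n}e^{-a t^{1/\tau}}=\Big(\frac{\tau n}{ae}\Big)^{\tau n},\qquad a>0,$$
whose right-hand side is comparable to $C_a^{\,n}(n!)^\tau$ by Stirling's formula, the residual polynomial factor being absorbed into the geometric constant. Consequently, for any single function $g$ on $\rr d$, the family $\|x^pg\|_{L^\infty}\lesssim A^{|p|}|p|!^\tau$ (for all $p$) is equivalent to the single decay estimate $\|g\,e^{h|x|^{1/\tau}}\|_{L^\infty}<+\infty$, with $A$ and $h$ reciprocally determined. Applying this to $g=f$ with exponent $\tau$ and to $g=\hat f$ with exponent $\gamma$ immediately yields (ii)$\Leftrightarrow$(iv). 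In the same vein, decay of $f$ like $e^{-h|x|^{1/\tau}}$ renders all moments finite with $\|x^pf\|_{L^1}\lesssim C^{|p|}|p|!^\tau$ (the integrated form of the same estimate), so through $\|\partial^p\hat f\|_{L^\infty}\leq(2\pi)^{|p|}\|x^pf\|_{L^1}$ the decay half of (iii)/(iv) already forces $\hat f$ to be smooth with Gevrey-type derivative bounds of order $\tau$; the reversed roles give the analogous statement for $f$.

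The one substantial point --- and the \textbf{main obstacle} --- is to pass from the \emph{separated} hypotheses of (iii) to the \emph{joint} control of (i); equivalently, to show that each $\partial^q f$ \emph{inherits} the subexponential decay of $f$ rather than merely staying bounded. Knowing only that $|f(x)|\lesssim e^{-h|x|^{1/\tau}}$ and $\|\partial^q f\|_{L^\infty}\lesssim B^{|q|}|q|!^\gamma$ does not make this automatic, so a real inequality is needed. I would use a localized Landau--Kolmogorov (Gorny) interpolation: from the Taylor expansion of $f$ about $x_0$ on a ball of radius $\rho$ one gets, in one variable, $|f'(x_0)|\leq \delta/\rho+\rho M/2$ with $\delta=\sup_{|x-x_0|\leq\rho}|f|$ and $M=\|f''\|_{L^\infty}$, and more generally $|f^{(k)}(x_0)|\lesssim \delta^{1-k/n}(\|f^{(n)}\|_{L^\infty})^{k/n}$ for $n>k$. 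Choosing $\rho$ a fixed fraction of $|x_0|$ turns $\delta$ into $e^{-h'|x_0|^{1/\tau}}$, while the Gevrey bound $\|f^{(n)}\|_{L^\infty}\lesssim B^n n!^\gamma$ controls the high-order factor; optimizing the free order $n\asymp|x_0|^{1/\tau}$ and then absorbing the residual polynomial-in-$x_0$ factor by one more application of the scalar identity yields $|\partial^qf(x)|\lesssim C^{|q|}|q|!^\gamma e^{-h''|x|^{1/\tau}}$. This balancing of $n$ against $|x|$ is the technical heart of the theorem.

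With the decay of all derivatives in hand the remaining implications are routine. Since $|\partial^q f(x)|\lesssim C^{|q|}|q|!^\gamma e^{-h''|x|^{1/\tau}}$, each $\partial^q f$ lies in $L^1$ with $\|\partial^qf\|_{L^1}\lesssim C^{|q|}|q|!^\gamma$, so $\|\o^q\hat f\|_{L^\infty}\leq(2\pi)^{-|q|}\|\partial^qf\|_{L^1}\lesssim C^{|q|}|q|!^\gamma$ supplies the missing decay of $\hat f$ and hence the full conditions (ii) and (iv). The same derivative-decay estimate, multiplied by $|x|^{|p|}$ and optimized through the scalar identity, gives $\|x^p\partial^qf\|_{L^\infty}\lesssim C^{|p|+|q|}|p|!^\tau|q|!^\gamma$, which is precisely (i) read off \eqref{gfseminorm}; and multiplying instead by $e^{h|x|^{1/\tau}}$ for any $h<h''$ gives (v). Tracking the quantifiers (``for some'' versus ``for every'' constants) throughout reproduces the parallel statements for $\Sigma_\tau^\gamma$, closing the equivalences.
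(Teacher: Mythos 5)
The paper does not prove Theorem \ref{simetrija}: it is stated as a known result with the remark that it ``has been reinvented several times'' and a pointer to the references \cite{CCK,GZ,KPP,NR}. So there is no in-paper proof to compare against; I can only assess your argument on its own terms.

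Your outline is correct and is essentially the classical Gelfand--Shilov route (closest in spirit to the original treatment in \cite{GS} combined with the Fourier characterization of \cite{CCK}; the reference \cite{GZ} instead goes through the short-time Fourier transform). The cycle of implications closes: (i)$\Rightarrow$(iii) by restriction of the seminorm, (ii)$\Leftrightarrow$(iv) by the $\sup_{t\ge0}t^ne^{-at^{1/\tau}}$ optimization, (iv)$\Rightarrow$(iii) via $\|\partial^qf\|_{L^\infty}\le\|\o^q\hat f\|_{L^1}$, the Gorny step (iii)$\Rightarrow$(v), and (v)$\Rightarrow$(i),(ii) by multiplying the derivative-decay estimate by $x^p$ or integrating it. You also correctly isolate the genuine content: the passage from the separated bounds of (iii) to joint control, which does require a Landau--Kolmogorov/Gorny interpolation and cannot be obtained by purely formal manipulation.

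The one place where the proposal is a sketch rather than a proof is precisely that interpolation step, and the devil is in the constants. The finite-interval form of the inequality carries both a combinatorial prefactor $C_{n,k}$ and a correction term of the shape $\delta\,\rho^{-k}$ (or $\max\{M_n,\ \delta\, n^n\rho^{-n}\}$ in place of $M_n$); you must verify that after choosing $\rho\asymp|x_0|$ and balancing $n$ the total prefactor remains geometric in $|q|$, and you must handle separately the regime where $|x_0|$ is small compared with the derivative order (there the pointwise decay factor $e^{-h''|x_0|^{1/\tau}}$ is only bounded below by a geometric factor $e^{-h''|q|}$, and one falls back on the global bound $B^{|q|}|q|!^\gamma$; this is exactly where the standing hypothesis $\gamma,\tau\ge1/2$, hence $\tau+\gamma\ge1$, is used). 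None of this breaks the argument --- it is how the classical proofs proceed --- but as written these verifications are asserted rather than carried out, so the heart of the theorem is still left to the reader.
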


Moreover, we could consider any $L^p$-norm, $1\leq p < \infty $ instead of $ L^{\infty}$-norm in Theorem  \ref{simetrija}, cf. \cite{KPP}.

By using Theorem \ref{simetrija} it can be shown that the Fourier transform is a topological isomorphism between
$ \cS_\tau ^\gamma (\rr d) $ and $ \cS_\gamma ^\tau (\rr d) $, $ \gamma, \tau \geq 1/2$ ($ {\mathcal F} (  \cS_\tau ^\gamma ) (\rr d) = \cS_\gamma ^\tau (\rr d)$),
which extends to a continuous linear transform from $ (\cS_\tau ^\gamma )' (\rr d) $ onto $ (\cS_\gamma ^\tau)' (\rr d)$.
Similar considerations hold for partial Fourier transforms with respect to some choice of variables.
In particular, if $ \gamma = \tau $ and $\gamma \geq 1/2 $ then $ {\mathcal F} ( \cS_\gamma ^\gamma)(\rr d)  = \cS_\gamma ^\gamma (\rr d),$
and if moreover $\gamma > 1/2 $, then $ {\mathcal F} ( \Sigma _\gamma ^\gamma)(\rr d)  = \Sigma _\gamma ^\gamma (\rr d),$
and similarly for their distribution spaces. Due to this fact,
corresponding dual spaces are  referred to as \textit{tempered ultra-distributions} (of Roumieu and Beurling  type respectively), see \cite{Pi88}.

The combination of global regularity with suitable decay properties at infinity (cf. \eqref{conde})
which is built in the very definition of $\cS _{\tau} ^{\gamma}(\rr d)$ and $\Sigma _{\tau} ^{\gamma}(\rr d)$,
makes them suitable for the study of different problems in mathematical physics, \cite{GS,Gram, NR}.
We refer to \cite{medit, CordPilRodTeo2010, Nenad2015, Nenad2016} for the study of localization operators in the context of Gelfand-Shilov spaces.
See also \cite{Toft_2012_Bergmann, ToftquasiBanach2017, Toft-ImageBargmann-2017} for related studies.

\subsection{Time-frequency representations}
In this subsection we recall the definitions and basic properties of the short-time Fourier transform and
the (cross-)$\tau$-Wigner distribution.

Given a function $f$ on $\rd$ and $x,\o\in\rd$, the translation operator $T_x$ and the modulation operator $M_\o$ are defined as
\begin{equation*}
 	T_xf(t)\coloneqq f(t-x)\qquad\text{and}\qquad M_\o f(t)\coloneqq e^{2\pi i\o t}f(t)
\end{equation*}
and their composition $\pi\phas\coloneqq M_\o T_x$ is called time-frequency shift. We can now introduce two most commonly used time-frequency representations of a signal $f$, the so-called short-time Fourier transform (STFT) and the (cross-)Wigner distribution.

\begin{definition} \label{def:stft}
Consider a window $g\in\gel(\rd)\smallsetminus\{0\}$. The short-time Fourier transform of $f\in\gel(\rd)$ with respect to $g$ is the function defined on the phase-space as follows:
\begin{equation*}
 		V_gf\phas\coloneqq\<f,\pi\phas g\>=\intrd \overline{g(t-x)}e^{-2\pi it\o}f(t)\,dt,\qquad\phas\in\rdd.
\end{equation*}
\end{definition}

We refer to \cite[Chapter 3]{Grochenig_2001_Foundations} for the properties and different equivalent forms of the STFT.

\begin{definition}  \label{def:tauwigner}
	Let $\tau\in[0,1]$. The (cross-)$\tau$-Wigner distribution of $f,g\in\gel(\rd)$ is defined by
	\begin{equation} \label{tauwigner}
		W_\tau(f,g)\phas\coloneqq\intrd e^{-2\pi it\o}f(x+\tau t)\overline{g(x-(1-\tau)t)}\,dt,\qquad\phas\in\rdd.
	\end{equation}
\end{definition}

When $\tau = 1/2$, $W_{1/2}(f,g)$ is simply called the cross-Wigner distribution of $f$ and $g$ and is denoted by $W(f,g)$ for short.
Both STFT and $W_\tau$ are well defined for $f,g \in L^2(\rd)$ and  if the operator $\cA_\tau$, $\tau\in(0,1)$,
is defined on $ L^2(\rd)$ as
\begin{equation*}
	\cA_\tau f(t)\coloneqq f\left(\frac{\tau-1}{\tau}t\right), \;\;\; t \in \rd,
\end{equation*}
then the connection between the STFT and $\tau$-Wigner distribution is described as follows.

\begin{lemma} \label{lm:wignerstft}
	Let $g\in\gel(\rd)\smallsetminus\{0\}$ and  $f\in\gel(\rd)$.
	\begin{itemize}
		\item[(i)]If $\tau\in(0,1)$, then
		\begin{equation} \label{tauwignerstft}
			W_\tau(f,g)\phas=\frac1{\tau^{d}}e^{2\pi i\frac1{\tau}\om x}V_{\cA_\tau g}f\left(\frac1{1-\tau}x,\frac1{\tau}\om\right), \;\;\;
\forall\,\phas\in\rdd;
		\end{equation}
		\item[(ii)] if $\tau=0$, then
		\begin{equation*}
			W_0(f,g)\phas=e^{-2\pi ix\o}f(x)\overline{\hg(\o)} = R(f,g)\phas, \;\;\;  \forall\,\phas\in\rdd;
		\end{equation*}
		\item[(iii)] if $\tau=1$, then
		\begin{equation*}
		W_1(f,g)\phas=e^{2\pi ix\o}\overline{g(x)}\hf(\o) = \overline{R(g,f) }  \phas, \;\;\; \forall\,\phas\in\rdd;
		\end{equation*}
	\end{itemize}
where $ R(f,g) $ denotes the Rihaczek distribution of $f$ and $g$.
\end{lemma}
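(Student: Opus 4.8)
The plan is to verify all three identities by direct computation from the definitions of $W_\tau$ and $V_g$. Since $f,g\in\gel(\rd)\hookrightarrow\cS(\rd)$, every integrand below is rapidly decreasing, so all integrals converge absolutely and the changes of variables are unproblematic. The three statements are independent: formula \eqref{tauwignerstft} in (i) contains the factors $\tau^{-d}$ and $(1-\tau)^{-1}$ and so is only meaningful for $\tau\in(0,1)$, whereas (ii) and (iii) treat the degenerate endpoint cases $\tau=0$ and $\tau=1$ separately.

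For part (i) I would start from
\[
W_\tau(f,g)\phas=\intrd e^{-2\pi it\o}f(x+\tau t)\overline{g(x-(1-\tau)t)}\,dt
\]
and perform the affine change of variables $u=x+\tau t$, legitimate because $\tau\neq0$. This produces the Jacobian factor $\tau^{-d}$, rewrites $e^{-2\pi it\o}$ as $e^{2\pi i\o x/\tau}\,e^{-2\pi iu\o/\tau}$ (yielding the modulation prefactor), and sends the window argument $x-(1-\tau)t$ to $\tfrac1\tau x-\tfrac{1-\tau}{\tau}u$. The leftover integral is $\intrd \overline{g\big(\tfrac{x}{\tau}-\tfrac{1-\tau}{\tau}u\big)}\,e^{-2\pi iu\o/\tau}f(u)\,du$, and the crux is to recognize it as an STFT. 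Unwinding $\cA_\tau g(s)=g\big(\tfrac{\tau-1}{\tau}s\big)$ and evaluating at $s=u-\tfrac{x}{1-\tau}$ returns exactly $g\big(\tfrac{x}{\tau}-\tfrac{1-\tau}{\tau}u\big)$, since $\tfrac{\tau-1}{\tau}\cdot\tfrac{1}{1-\tau}=-\tfrac1\tau$. Hence the integral equals $V_{\cA_\tau g}f\big(\tfrac{x}{1-\tau},\tfrac{\o}{\tau}\big)$ and \eqref{tauwignerstft} follows.

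For parts (ii) and (iii) the $\tau$-Wigner distribution collapses. When $\tau=0$ the factor $f(x+\tau t)=f(x)$ comes out of the integral, leaving $f(x)\intrd e^{-2\pi it\o}\overline{g(x-t)}\,dt$; the substitution $s=x-t$ exhibits the conjugate Fourier transform of $g$ and gives $e^{-2\pi ix\o}f(x)\overline{\hg(\o)}=R(f,g)\phas$. Symmetrically, when $\tau=1$ the factor $\overline{g(x-(1-\tau)t)}=\overline{g(x)}$ factors out, and the substitution $u=x+t$ produces $e^{2\pi ix\o}\overline{g(x)}\hf(\o)$; comparing this with $R(g,f)\phas=e^{-2\pi ix\o}g(x)\overline{\hf(\o)}$ identifies it with $\overline{R(g,f)}\phas$, as claimed.

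None of the steps presents a genuine difficulty. The only real bookkeeping is tracking the affine substitution in (i) and confirming that the transformed window argument coincides with $\cA_\tau g$ evaluated at the shifted point, which reduces to the single algebraic identity $\tfrac{\tau-1}{\tau}\cdot\tfrac{1}{1-\tau}=-\tfrac1\tau$. The absolute convergence granted by $f,g\in\gel(\rd)$ makes each formal manipulation rigorous, so the lemma follows by inspection once the change of variables is carried out.
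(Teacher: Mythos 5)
Your proposal is correct and follows essentially the same route as the paper: part (i) is proved by the identical affine substitution $u=x+\tau t$ (the paper writes $s=x+\tau t$), with the same Jacobian factor $\tau^{-d}$, the same splitting of the phase, and the same identification of the transformed window argument with $\cA_\tau g$ evaluated at $u-\tfrac{x}{1-\tau}$. The paper only writes out (i) and leaves (ii) and (iii) as straightforward; your explicit verification of those endpoint cases matches what is intended.
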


\begin{proof}
The proof is straightforward, and we show only {\em (i)} for the sake of completeness (see also \cite[Proposition 1.3.30]{CordRod2020}).
After the change of variables $ s = x+ \tau t $ in \eqref{tauwigner} we obtain
\begin{multline*}
W_\tau(f,g)\phas = \frac{1}{\tau^d} \intrd e^{-2\pi i \frac{1}{\tau}(s-x)\o} f(s)\overline{g(\frac{1}{\tau} (x - (1-\tau)s))}\,ds \\
= \frac{1}{\tau^d} e^{2\pi i \frac{1}{\tau}x\o} \intrd e^{-2\pi i s \frac{\o}{\tau}} f(s)\overline{\cA_\tau g(s -\frac{x}{1-\tau})}\,ds \\
=\frac1{\tau^{d}}e^{2\pi i\frac1{\tau}\om x}V_{\cA_\tau g}f\left(\frac1{1-\tau}x,\frac1{\tau}\om\right), \;\;\;
\forall\,\phas\in\rdd,
\end{multline*}
since $\cA_\tau g (s -\frac{x}{1-\tau}) =  g(\frac{x}{\tau} + \frac{\tau -1}{\tau} s)$.

\par

Notice that when $\tau = 1/2$, we have $\cA_{1/2} g(t) = g(-t),$ and \eqref{tauwignerstft} becomes
$$
W(f,g) \phas  = 2^d e^{4\pi i x\cdot \omega} V_{\cA_{1/2} g} (2x, 2\omega), \;\;\; \forall\,\phas\in\rdd.
$$
\end{proof}

Definitions \ref{def:stft} and \ref{def:tauwigner} are uniquely extended  to $f\in(\gel ) '(\rd)$ by duality.

We will also use the following fact related to time-frequency representations of the Gelfand-Shilov spaces.

\begin{theorem}\label{potreban}
Let  $\cS ^{*}(\rr d) $ denote $\cS ^{\{\gamma\}}(\rr d)  $, $ \gamma \geq 1/2$,
or $\cS ^{(\gamma)}(\rr d)$, $ \gamma > 1/2$. Moreover, let $g\in\cS^{*}(\rd) \smallsetminus \{ 0\}$ and $\tau\in[0,1]$. Then the following are true:
	\begin{itemize}
		\item[(i)] if $f\in \cS^{*} (\rd)$, then $W_\tau(f,g),V_gf\in\cS^{*}(\rdd)$;
		\item[(ii)] if $ f \in  (\cS ^{*})' (\rd)$ and $W_\tau(f,g)\in\cS^{*} (\rdd)$ or $V_gf\in\cS^{*} (\rdd)$, then $f \in \cS^{*} (\rd)$.
	\end{itemize}
\end{theorem}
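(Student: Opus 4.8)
The plan is to reduce everything to the short-time Fourier transform and to exploit that the \emph{diagonal} Gelfand-Shilov spaces $\cS^{*}$ (those with equal indices $\gamma=\tau$) are invariant under the Fourier transform, partial Fourier transforms, invertible linear changes of variables (which preserve $\cS^{*}$ precisely because its two indices coincide), dilations, tensor products, and multiplication by chirps $e^{iQ}$ with $Q$ a real quadratic form. The Fourier and partial Fourier invariance are already recorded in Subsection \ref{subs:gsspace}, and the remaining properties are standard (\cite{GS,NR,Toft_2012_Bergmann}). Notably, multiplication by $e^{iQ}$ preserves $\cS^{*}$ exactly in the range $\gamma\geq 1/2$ (Roumieu) and $\gamma>1/2$ (Beurling), which is what forces the hypotheses of the theorem.

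For part (i) and the STFT I would write $V_gf(x,\o)=\cF_{t\to\o}[\Phi](x,\o)$ with $\Phi(x,t)\coloneqq f(t)\overline{g(t-x)}$. Since $f\otimes\bar g\in\cS^{*}(\rdd)$ by the tensor-product property of $\cS^{*}$, and $\Phi$ is the composition of $f\otimes\bar g$ with the invertible linear map $(x,t)\mapsto(t,t-x)$, linear invariance gives $\Phi\in\cS^{*}(\rdd)$; applying the partial Fourier transform in $t$ then yields $V_gf\in\cS^{*}(\rdd)$. For $W_\tau(f,g)$ with $\tau\in(0,1)$ I would invoke Lemma \ref{lm:wignerstft}(i): since dilations preserve $\cS^{*}$ we have $\cA_\tau g\in\cS^{*}(\rd)\smallsetminus\{0\}$, so $V_{\cA_\tau g}f\in\cS^{*}(\rdd)$ by the above, and $W_\tau(f,g)$ is obtained from it by a dilation and by multiplication by the chirp $e^{2\pi i\o x/\tau}$, both $\cS^{*}$-preserving. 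The boundary cases $\tau=0,1$ follow from the explicit Rihaczek forms in Lemma \ref{lm:wignerstft}(ii)--(iii), where $W_0(f,g)=e^{-2\pi i x\o}\,f\otimes\overline{\hat g}$ is a tensor product (using $\hat g\in\cS^{*}$) multiplied by a chirp.

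For part (ii) I would first reduce the $W_\tau$ statement to the STFT statement: for $\tau\in(0,1)$ the operations in Lemma \ref{lm:wignerstft}(i) are invertible and $\cS^{*}$-bicontinuous, so $W_\tau(f,g)\in\cS^{*}(\rdd)$ forces $V_{\cA_\tau g}f\in\cS^{*}(\rdd)$, while the cases $\tau=0,1$ are handled directly from the Rihaczek forms by restriction to a hyperplane where $\hat g$ (resp. $g$) does not vanish. It then remains to prove the STFT case: if $f\in(\cS^{*})'$, $g\in\cS^{*}\smallsetminus\{0\}$ and $V_gf\in\cS^{*}(\rdd)$, then $f\in\cS^{*}(\rd)$. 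Here the tool is the weak inversion formula $f=\langle g,g\rangle^{-1}\iint V_gf(x,\o)\,\pi(x,\o)g\,dx\,d\o$. Differentiating and multiplying by monomials under the integral, I would expand $t^p\partial_t^q[(\pi(x,\o)g)(t)]$ by Leibniz, write $t^p=((t-x)+x)^p$, and use $g\in\cS^{*}$ to bound the $(t-x)$-dependent factors uniformly in $t$; the residual powers $x^{p''}\o^{q'}$ multiply $V_gf(x,\o)$, whose subexponential decay (Theorem \ref{simetrija}(iv) in dimension $2d$) makes $\iint |x^{p''}\o^{q'}|\,|V_gf(x,\o)|\,dx\,d\o$ finite with the correct factorial growth. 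Collecting these estimates gives $\|t^p\partial^q f\|_{L^\infty}\lesssim h^{|p|+|q|}|p|!^\gamma|q|!^\gamma$, i.e. $f\in\cS^{*}(\rd)$ by Theorem \ref{simetrija}(iii).

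The main obstacle is this last estimate in part (ii): controlling the Gelfand-Shilov seminorms of the reconstructed $f$ with the correct factorial, rather than merely polynomial, growth. This requires the precise subexponential weight estimates for $V_gf$ and careful bookkeeping of constants; in particular, in the Beurling case one must produce the bound for \emph{every} $h$, using that $V_gf$ and $g$ satisfy the corresponding ``for every constant'' estimates, while in the Roumieu case a single admissible constant suffices. The invariance properties, though routine, must likewise be applied in their quantitative form, so that chirp multiplication does not degrade the factorial order below $\gamma\geq 1/2$.
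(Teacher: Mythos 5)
Your proposal is correct and is essentially the argument the paper intends: the paper itself gives no details, merely citing \cite{GZ,Te1,Toft_2012_Bergmann} for the STFT and $W_{1/2}$ cases and leaving general $\tau$ "as an exercise", and your reduction of $W_\tau$ to the STFT via Lemma \ref{lm:wignerstft} together with dilation/chirp invariance of the diagonal Gelfand--Shilov spaces, plus the inversion-formula seminorm estimate for part (ii), is exactly how that exercise and the cited proofs go. No gaps worth flagging beyond the quantifier bookkeeping you already identify for the Beurling case.
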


\begin{proof}
The proof for the STFT and $W_{1/2}$ can be found in several sources, see e.g. \cite{GZ,Te1, Toft_2012_Bergmann}.
The case $\tau \in [0,1], $ $\tau \neq 1/2$ can be proved in a similar fashion and is left for the reader as an exercise.
\end{proof}

\subsection{Pseudodifferential and localization operators} \label{subs:psido}
Next we introduce $\tau$-quan\-ti\-za\-ti\-ons as pseudodifferential operators acting on $ \gel(\rd)$.
We address the reader to the textbooks \cite{CordRod2020,Grochenig_2001_Foundations} in which the framework is mostly the one of $\cS (\rd)$ and $\cS' (\rd)$,
and we suggest \cite{NR, Te1, Nenad2015, Toft_2012_Bergmann, ToftquasiBanach2017, Toft-ImageBargmann-2017} for the framework of Gelfand-Shilov spaces and their spaces of ultra-distributions.

\begin{definition} \label{def:tauop}
Let $\tau\in[0,1]$. Given a symbol $\sigma\in\ult(\rdd)$, the $\tau$-quantization of $\sigma$
is the pseudodifferential operator
	\begin{equation*}
		\Opt(\sigma)\colon\gel(\rd)\to\ult(\rd)
	\end{equation*}
	defined by the formal integral
	\begin{equation} \label{eq:taupsido}
	\Opt(\sigma)f(x)\coloneqq\iintrdd e^{2\pi i(x-y)\om}\sigma\left((1-\tau)x+\tau y,\om\right)f(y)\,dyd\om,
	\end{equation}
	or, in a weak sense,
	\begin{equation*}
		\<\Opt(\sigma)f,g\> =
\int_{\rd} \iintrdd e^{2\pi i(x-y)\om}\sigma\left((1-\tau)x+\tau y,\om\right)f(y) \overline{g}(x)\,dyd\om dx,
	\end{equation*}
$ f,g\in\gel(\rd).$
\end{definition}

The correspondence between the symbol $\sigma $ and the operator $\Opt(\sigma)$ given by \eqref{eq:taupsido} is known as the Shubin $\tau$-representation, \cite{Shubin91}. By a change of variables and an interchange of the order of integration, it can be shown that
$\Opt(\sigma)$, $\sigma \in \ult(\rdd)$, and the (cross-)$\tau$-Wigner distribution are related by the following formula:
	\begin{equation} \label{optauwigner}
		\<\Opt(\sigma)f,g\> =
\<\sigma,W_\tau(g,f)\>,\qquad f,g\in\gel(\rd).
	\end{equation}
Thus, for  $\tau = 1/2 $ (the  Weyl quantization) we recover the Weyl pseudodifferential operators, and when $\tau = 0 $ we obtain the Kohn-Nirenberg operators. Commonly used equivalent notation for the Weyl operators  in the literature
are $\OpW(\sigma)$, $\Opw(\sigma)$, $L_\sigma$ or $\sigma^w$. The Weyl calculus reveals to be extremely important since every continuous and linear operator from $\gel(\rd)$ into $\ult(\rd)$ can be written as the Weyl transform of some (Weyl) symbol $\sigma\in\ult(\rdd)$. This is due to the Schwartz kernel theorem when extended to the duality between $\gel(\rd)$ and $\ult(\rd)$, see \cite{LCPT, Nenad2015}.

\par

Next we introduce localization operators in the form of the STFT multipliers, and discuss their relation to $\tau$-quantizations given above.

\begin{definition} \label{def:locop}
	Consider windows $\f_1,\f_2\in\gel(\rd)\smallsetminus\{0\}$ and a symbol $a\in\ult(\rdd)$. The localization operator
	\begin{equation*}
		\gaw\colon\gel(\rd)\to\ult(\rd)
	\end{equation*}
	is the continuous and linear mapping formally defined by
	\begin{equation*}
		\gaw f(t)\coloneqq\iintrdd a\phas V_{\f_1}f\phas M_\o T_x \f_2(t)\,dxd\o,
	\end{equation*}
	or, in a weak sense,
	\begin{equation} \label{eq:locopweak}
		\<\gaw f,g\>\coloneqq\<a,\overline{V_{\f_1}f} V_{\f_2}g\>,\qquad f,g\in\gel(\rd).
	\end{equation}
\end{definition}

It can be proved that every localization operator $\gaw$ can be written in the Weyl form, i.e. identified with
the Weyl pseudodifferential operator due to the following formula
\begin{equation} \label{eq:locopweyl}
	\gaw=\mathrm{Op}_{1/2} (a\ast W(\f_2,\f_1)),
\end{equation}
and $\sigma=a\ast W(\f_2,\f_1)$ is called Weyl symbol of $\gaw$.  We refer to \cite[Lemma 2.4]{BCG02} or \cite{folland89} for the proof, see also \cite{Nenad2016}.

By combining \eqref{optauwigner} and \eqref{eq:locopweyl}
we define $\tau$-localization operators as follows.

Let there be given $\tau \in [0,1]$, windows $\f_1,\f_2\in\gel(\rd)\smallsetminus\{0\}$ and a symbol $a\in\ult(\rdd)$. Then  $\tau$-localization operator
is defined to be
\begin{equation} \label{eq:locopweyltau}
	\gawtau \coloneqq\mathrm{Op}_{\tau} (a\ast W_{\tau} (\f_2,\f_1)).
\end{equation}
In other words, every $\tau$-localization operator is identified with $\tau$-pseudodifferential operator associated to the symbol
$ \sigma_\tau = a\ast W_{\tau} (\f_2,\f_1).$

However, it turns out that the class of localization operators given by \eqref{eq:locopweyltau} coincides to the one given by Definition \ref{def:locop}, see  \cite{Toft-2013}. We give an independent proof based on the kernel argument.

\begin{proposition}  \label{pr:taulocop}
Let $\f_1,\f_2\in\gel(\rd)\smallsetminus\{0\}$, $a\in\ult(\rdd)$ and $\tau \in [0,1].$ Then
$$
\gaw = \gawtau.
$$
\end{proposition}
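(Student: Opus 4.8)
The plan is to identify the two operators by computing their distributional integral kernels and checking that the parameter $\tau$ drops out. I work throughout in the weak sense on $\gel(\rd)\times\gel(\rd)$, so that every oscillatory integral below is interpreted through the duality between $\gel(\rdd)$ and $\ult(\rdd)$; this is what legitimizes the interchanges of integration and the plane-wave integration that produces a Dirac mass, and it is the only genuinely delicate point of the argument. By the kernel theorem for $\gel(\rd)$ and $\ult(\rd)$ recalled in Subsection \ref{subs:psido}, it then suffices to show that $\gaw$ and $\gawtau$ have the same kernel.

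First I would read off from \eqref{eq:taupsido} that $\Opt(\sigma)$ has kernel
\begin{equation*}
K_\tau(t,s)=\intrd e^{2\pi i(t-s)\eta}\,\sigma\big((1-\tau)t+\tau s,\eta\big)\,d\eta,
\end{equation*}
that is, the partial inverse Fourier transform of $\sigma$ in the second variable, sampled at $\big((1-\tau)t+\tau s,\,t-s\big)$. Applying this to the symbol $\sigma_\tau=a\ast W_\tau(\f_2,\f_1)$ of $\gawtau$ from \eqref{eq:locopweyltau} and inserting the definition \eqref{tauwigner} of $W_\tau$, the kernel of $\gawtau$ is expressed as an iterated integral in the variables $(x,\omega)$ of $a$, the frequency $\eta$, and the inner Wigner variable $r$.

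The crux is the $\eta$-integration. The factor $e^{-2\pi i r(\eta-\omega)}$ coming from $W_\tau$ splits as $e^{-2\pi i r\eta}e^{2\pi i r\omega}$, so the $\eta$-integral equals $\intrd e^{2\pi i\eta(t-s-r)}\,d\eta=\delta(t-s-r)$; this forces $r=t-s$ and leaves the phase $e^{2\pi i(t-s)\omega}$. Setting $u=(1-\tau)t+\tau s$ and substituting $r=t-s$, the arguments of the windows become, after an elementary cancellation,
\begin{equation*}
u-x+\tau r=t-x,\qquad u-x-(1-\tau)r=s-x,
\end{equation*}
both independent of $\tau$. Hence
\begin{equation*}
K_{\gawtau}(t,s)=\iintrdd a(x,\omega)\,e^{2\pi i(t-s)\omega}\,\f_2(t-x)\,\overline{\f_1(s-x)}\,dx\,d\omega,
\end{equation*}
which carries no trace of $\tau$.

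Finally I would note that the right-hand side is exactly the kernel obtained directly from Definition \ref{def:locop} by writing out $V_{\f_1}f$ and $M_\omega T_x\f_2$ in $\gaw f(t)=\iintrdd a\phas V_{\f_1}f\phas M_\omega T_x\f_2(t)\,dx\,d\omega$; equivalently, it is the $\tau=1/2$ instance, consistent with \eqref{eq:locopweyl}. Since the kernels coincide, the kernel theorem yields $\gaw=\gawtau$ for every $\tau\in[0,1]$. The main obstacle is not the algebra — the $\tau$-cancellation above is clean — but rather the rigorous justification of the Fubini interchanges and of the $\delta$-producing $\eta$-integral; this is handled by carrying the whole computation inside the sesquilinear pairing $\langle\gawtau f,g\rangle$ with $f,g\in\gel(\rd)$, where one may also invoke \eqref{optauwigner} to recast $\langle\gawtau f,g\rangle=\langle a\ast W_\tau(\f_2,\f_1),W_\tau(g,f)\rangle$ as an alternative, fully weak, route to the same identity.
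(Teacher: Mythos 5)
Your proof is correct and follows essentially the same route as the paper: both arguments identify the two operators through their distributional kernels via the kernel theorem for $\gel(\rd)$ and $\ult(\rd)$, use a plane-wave integral producing a Dirac delta to collapse one integration, and observe that the $\tau$-dependence cancels in the arguments of the windows, yielding the kernel $\iintrdd a(x,\omega)\,e^{2\pi i(t-s)\omega}\f_2(t-x)\overline{\f_1(s-x)}\,dx\,d\omega$ in both cases. The only cosmetic difference is that the paper carries the computation inside the pairing $\langle \Opt(a\ast W_\tau(\f_2,\f_1))f,g\rangle=\langle a\ast W_\tau(\f_2,\f_1),W_\tau(g,f)\rangle$ and invokes the covariance of $W_\tau$ under time-frequency shifts, whereas you substitute the convolved symbol directly into the kernel formula for $\Opt$; the delta-producing step and the $\tau$-cancellation are identical.
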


\begin{proof}
By the Schwartz kernel theorem for $\gel(\rd)$ and $\ult(\rd)$, it suffices to show that the kernels of $ \gaw $ and $ \gawtau$ coincide.
From \eqref{eq:locopweak} it follows that
\begin{multline*}
\langle \gaw f, g \rangle
\\[1ex]
= \iint_{\mathbb{R}^{2d}} a(x,\omega)
\left ( \int_{\mathbb{R}^{d}} f(y) \overline{M_\omega T_x \varphi _1} (y) dy \right )
\left ( \int_{\mathbb{R}^{d}} \overline{g} (t) M_\omega T_x \varphi _2 (t) dt \right ) dx d\omega
\\[1ex]
= \int_{\mathbb{R}^{d}} \int_{\mathbb{R}^{d}} f(y) \overline{g} (t)
\left ( \iint_{\mathbb{R}^{2d}} a(x,\omega) \overline{M_\omega T_x \varphi _1} (y)
 M_\omega T_x \varphi _2 (t) dx d\omega \right )   dt  dy
=
\langle k, g \otimes \overline{f} \rangle,
\end{multline*}
so the kernel of $ \gaw $ is given by
\begin{equation} \label{kernel}
k(t,y) =  \iintrdd a(x,\omega) \overline{M_\omega T_x \varphi _1} (y)
M_\omega T_x \varphi _2 (t) dx d\omega.
\end{equation}

It remains to calculate the kernel of  $ \gawtau$.
We first calculate $a\ast W_{\tau} (\f_2,\f_1)$:
\begin{multline*}
a \ast W_{\tau} (\varphi_2,\varphi_1)(p,q)
=
\iint_{\mathbb{R}^{2d}}  a(x,\omega) W_{\tau} (\varphi_2,\varphi_1) (p-x,q-\omega) dx d\omega
\\[1ex]
=
\iint_{\mathbb{R}^{2d}}  a(x,\omega) W_{\tau} (T_x M_\omega \varphi_2, T_x M_\omega \varphi_1) (p,q) dx d\omega
\\[1ex]
=
\iint_{\mathbb{R}^{2d}}  a(x,\omega)
\left ( \int_{\mathbb{R}^{d}} T_x M_\omega \varphi_2(p+ \tau s )\overline{ T_x M_\omega \varphi_1}
(p- (1-\tau)s)e^{-2\pi i q s} ds \right) dx d\omega
\\[1ex]
=
\iint_{\mathbb{R}^{2d}}  a(x,\omega)
\left ( \int_{\mathbb{R}^{d}}  M_\omega T_x \varphi_2(p+ \tau s )\overline{ M_\omega T_x \varphi_1}
(p- (1-\tau)s)e^{-2\pi i q s} ds \right) dx d\omega,
\end{multline*}
where we have used the commutation relation $ T_x M_\omega = e^{-2 \pi i x \omega} M_\omega T_x ,$
and the covariance property of $\tau$-Wigner transform:
$$
W_{\tau}(T_x M_\omega f, T_x M_\omega g)  (p,q) = W_{\tau} (f,g) (p-x, q-\omega),
$$
which follows by direct calculation.

\par

Now we have
\begin{multline*}
\langle  \mathrm{Op}_{\tau} (a\ast W_{\tau} (\f_2,\f_1)) f, g \rangle =
\langle  a\ast W_\tau (\varphi_2,\varphi_1), W_\tau ( g, f) \rangle
\\[1ex]
=
\iint_{\mathbb{R}^{2d}}   a(x,\omega)
\iint_{\mathbb{R}^{2d}} \big (
\iint_{\mathbb{R}^{2d}}  M_\omega T_x \varphi_2(p+ \tau s )\overline{ M_\omega T_x \varphi_1}
(p-(1-\tau)s)e^{-2\pi i q (s-r)}
\\[1ex]
\times
\overline{g}(p+ \tau r) f (p- (1-\tau)r )
ds dr \big ) dp dq dx d\omega
\\[1ex]
=
\iint_{\mathbb{R}^{2d}}   a(x,\omega)
\int_{\mathbb{R}^{d}} \big (
\iint_{\mathbb{R}^{2d}}  M_\omega T_x \varphi_2(p+ \tau s )\overline{ M_\omega T_x \varphi_1}
(p-(1-\tau)s)
\\[1ex]
\times \overline{g}(p+ \tau r) f (p-(1-\tau)r ) \delta (r-s) ds
dr \big ) dp dx d\omega
\\[1ex]
=
\iint_{\mathbb{R}^{2d}}   a(x,\omega)
\int_{\mathbb{R}^{d}} \big (
\int_{\mathbb{R}^{d}}  M_\omega T_x \varphi_2(p+ \tau s )\overline{ M_\omega T_x \varphi_1}
(p-(1-\tau)s)
\\[1ex]
\times
\overline{g}(p+ \tau s) f (p-(1-\tau)s))
ds \big ) dp dx d\omega.
\end{multline*}
where we used a suitable interpretation of the oscillatory integrals in the distributional sense.
In particular, the Fourier inversion formula in the sense of distributions gives
$ \int e^{2\pi i x \omega} d \omega = \delta (x),$
where $\delta$ denotes the Dirac delta, whence
$$ \iint_{\rdd} \phi (x)  e^{2\pi i (x-y) \omega} d x d \omega = \phi (y), \;\;\;
\phi \in {\mathcal S}^{( 1)} (\mathbb{R}^{d}).
$$

Finally, the change of variable $ p+ \tau s = t$ and  $ p- (1-\tau)s = y$
gives
\begin{multline*}
\langle  \mathrm{Op}_{\tau} (a\ast W_{\tau} (\f_2,\f_1)) f, g \rangle  \\
= \iint_{\mathbb{R}^{2d}} \iint_{\mathbb{R}^{2d}}   a(x,\omega) M_\omega T_x \varphi_2(t) \overline{ M_\omega T_x \varphi_1} (y) dx d\omega
 \overline{g}(t) f (y)  dt dy \\
= \langle k_\tau, g \otimes \overline{f} \rangle = \langle k, g \otimes \overline{f} \rangle,
\end{multline*}
where $k$ is given by \eqref{kernel}. By the uniqueness of the kernel we conclude that
$$
\gaw =  \gawtau
$$
and the proof is finished.
\end{proof}
	
\subsection{Ultra-modulation spaces} \label{subs:modsp}
We use the terminology ultra-modulation spaces in order to emphasize that such spaces may contain ultra-distributions, contrary to the most usual situation when members of modulation spaces are tempered distributions. However, ultra-modulation spaces belong to the  family
of modulation spaces introduced in \cite{feichtinger-modulation}. We refer to e.g. \cite{Toft-GaborAnalysis-2015,Toft-ImageBargmann-2017} for a  general approach to the broad class of modulation spaces.
	
\par

\begin{definition}\label{Def-UltraMod}
	Fix a non-zero window $g\in\gel(\rd)$, a weight $m\in\mathscr{P}_E(\rdd)$ and $0<p,q\leq \infty$. The ultra-modulation space $M^{p,q}_m(\rd)$ consists of all tempered ultra-distributions $f\in\ult(\rd)$ such that the (quasi-)norm
	\begin{equation}\label{Eq-NormUltraMod}
	\|f\|_{M^{p,q}_m}\coloneqq\|V_gf\|_{L^{p,q}_m}=\left(\intrd\left(\intrd |V_g f \phas|^p m\phas^p dx  \right)^{\frac qp}d\o\right)^\frac1q
	\end{equation}
	(obvious modifications with $p=\infty$ or $q=\infty)$ is finite.
\end{definition}

We write $M^p_m(\rd)$ for $M^{p,p}_m(\rd)$, and $M^{p,q}(\rd)$ if $m\equiv 1$.

We recall that the spaces $M^{p,q}_m(\rd)\subset\cS'(\rd)$, with $1\leq p,q\leq \infty$, $g\in\cS(\rd)$ and $m$ of at most polynomial growth at infinity, were invented by H. Feichtinger in \cite{feichtinger-modulation} and called \textit{modulation spaces}. There it was proved that they are Banach spaces and that different window functions in $\cS(\rd) \smallsetminus \{ 0\}$
yield equivalent norms. Moreover, the window class can be enlarged to  the Feichtinger algebra $M^{1,1}_v(\rd)$, where $v$ is a submultiplicative weight of at most polynomial growth at infinity such that $m$ is $v$-moderate.

It turned out that properties analogous to the Banach case hold in the quasi-Banach one as well, see \cite{Galperin2004}. Moreover, such properties remain valid also in the more general setting of Definition \ref{Def-UltraMod}. We collect them in the following theorem in the same manner of \cite{Toft-GaborAnalysis-2015,ToftquasiBanach2017}, see references therein also.

\begin{theorem}\label{Th-PoertyUltraMod}
Consider  $0<p,p_1,p_2,q,q_1,q_2\leq\infty$ and weights $m,m_1,m_2\in\mathscr{P}_E(\rdd)$. Let $\norm{\cdot}_{M^{p,q}_m}$ be given by \eqref{Eq-NormUltraMod} for a fixed $g\in\gel(\rd)\smallsetminus\{0\}$. Then:
	\begin{itemize}
		\item[(i)] $\left(M^{p,q}_m(\rd),\norm{\cdot}_{M^{p,q}_m}\right)$ is a quasi-Banach space whenever at least one between $p$ and $q$ is strictly smaller than $1$, otherwise it is a Banach space;
		\item[(ii)] if $\tilde g \in\gel(\rd)\smallsetminus\{0\}$, $\tilde g\neq g$, then it induces a (quasi-)norm equivalent to $\norm{\cdot}_{M^{p,q}_m}$;
		\item[(iii)] if $p_1\leq p_2$, $q_1\leq q_2$ and $m_2\lesssim m_1$, then:
		\begin{equation*}
			\gel(\rd)\hookrightarrow M^{p_1,q_1}_{m_1}(\rd)\hookrightarrow M^{p_2,q_2}_{m_2}(\rd) \hookrightarrow (\gel)'(\rd),
		\end{equation*}
and the inclusions are dense;
		\item[(iv)] if $p,q<\infty$, then :
		\begin{equation*}
			\left(M^{p,q}_m(\rd)\right)'\cong M^{p',q'}_{1/m}(\rd),
		\end{equation*}
		where
		\begin{equation*}
			p'\coloneqq
			\begin{cases}
			\infty\quad&\text{if}\quad0<p\leq1\\
			\frac{p}{p-1}\quad&\text{if}\quad1<p<\infty
			\end{cases}
		\end{equation*}
		and similarly for $q'$.
	\end{itemize}
\end{theorem}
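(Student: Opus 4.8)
The plan is to reduce every item to a property of the short-time Fourier transform and, after discretization, to the corresponding property of the weighted mixed sequence spaces $\ell^{p,q}_m(\zdd)$ recorded in the preceding sequence-space proposition. The single analytic input that makes all four statements work simultaneously is the superexponential decay of the STFT of a Gelfand--Shilov window: if $g,\gamma\in\gel(\rd)$ then, by Theorem~\ref{potreban}, $V_g\gamma\in\gel(\rdd)$, so (by characterization (iv) in Theorem~\ref{simetrija}) $V_g\gamma\in L^1_v(\rdd)\cap L^r_v(\rdd)$ for every $0<r\le\infty$ and every exponential weight $v=w_k$. By the first lemma of this section every $m\in\mathscr{P}_E(\rdd)$ is $w_k$-moderate for some $k>0$, so the controlling submultiplicative weight may always be taken to be such a $v$. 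This is what allows the quasi-Banach convolution estimates below to be applied freely.

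First I would record the pointwise change-of-window inequality
\[
|V_\psi f(z)|\le \frac{1}{|\langle\gamma,\phi\rangle|}\,\big(|V_\phi f|\ast|V_\psi\gamma|\big)(z),\qquad z\in\rdd,
\]
valid for $f\in(\gel)'(\rd)$ and windows $\phi,\psi,\gamma\in\gel(\rd)$ with $\langle\gamma,\phi\rangle\neq0$. Combined with Young's convolution inequality on $L^{p,q}_m$ (in the quasi-Banach range one uses the form $p=q=r$, the convolution factor $|V_\psi\gamma|$ lying in the relevant $L^r_v$ by the decay above) this gives $\|f\|_{M^{p,q}_\psi}\lesssim\|f\|_{M^{p,q}_\phi}$; interchanging $\phi$ and $\psi$ yields the equivalence of (quasi-)norms, which is item~(ii). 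The same inequality shows that the reproducing-kernel operator $F\mapsto\|g\|_2^{-2}\,F\ast V_gg$ is bounded on $L^{p,q}_m$ and acts as the identity on the range of $V_g$, so that $V_g(M^{p,q}_m)$ is a closed subspace of $L^{p,q}_m$. Since $V_g$ is by definition an isometry of $M^{p,q}_m(\rd)$ onto this closed subspace, completeness of $L^{p,q}_m$ transfers to $M^{p,q}_m$, and the (quasi-)triangle inequality is inherited from $L^{p,q}_m$ with modulus of concavity governed by $\min(1,p,q)$. This proves item~(i).

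For item~(iii) I would pass to Gabor frames. Fixing $g\in\gel(\rd)$ and a sufficiently fine lattice $\alpha\zd\times\beta\zd$, the sampled analysis map $f\mapsto(V_gf(\lambda))_\lambda$ and the Gabor synthesis map realize $M^{p,q}_m(\rd)$ as a retract of $\ell^{p,q}_m(\zdd)$, with $\|f\|_{M^{p,q}_m}\asymp\|(V_gf(\lambda))_\lambda\|_{\ell^{p,q}_m}$; boundedness of both maps again follows from the pointwise convolution estimate and the decay of $V_gg$. The inclusions for $p_1\le p_2$, $q_1\le q_2$, $m_2\lesssim m_1$ then reduce to the pointwise bound $m_2\lesssim m_1$ and to the sequence-space inclusion $\ell^{p_1,q_1}_m\hookrightarrow\ell^{p_2,q_2}_m$ from part (i) of the sequence-space proposition. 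The extreme embeddings $\gel(\rd)\hookrightarrow M^{p_1,q_1}_{m_1}(\rd)$ and $M^{p_2,q_2}_{m_2}(\rd)\hookrightarrow(\gel)'(\rd)$ follow from the superexponential decay of $V_gf$ for $f\in\gel(\rd)$ and from the Hölder estimate $|\langle f,h\rangle|\lesssim\|V_gf\|_{L^{p,q}_m}\|V_gh\|_{L^{p',q'}_{1/m}}$ for $h\in\gel(\rd)$, respectively. Density (in norm when $p,q<\infty$, in the weak-$\ast$ sense otherwise) is obtained by truncating the Gabor expansion, i.e.\ by approximating the coefficient sequence by finitely supported ones. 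Finally, item~(iv) for $p,q<\infty$ rests on the Moyal-type identity $\langle f,h\rangle=\|g\|_2^{-2}\langle V_gf,V_gh\rangle_{L^2}$: one direction is the Hölder estimate just noted, giving $M^{p',q'}_{1/m}(\rd)\hookrightarrow(M^{p,q}_m(\rd))'$, while for the reverse one transports a functional via $V_g$ to the closed subspace $V_g(M^{p,q}_m)\subset L^{p,q}_m$, extends it by Hahn--Banach, uses $(L^{p,q}_m)'\cong L^{p',q'}_{1/m}$, and projects the representative back with $F\mapsto\|g\|_2^{-2}F\ast V_gg$. I expect the main obstacle to be precisely the quasi-Banach regime $\min(p,q)<1$: there the spaces need not be locally convex, so the duality route for (iv) is genuinely confined to $p,q\ge1$, and completeness together with window-invariance must be routed through the Gabor/sequence-space transfer rather than through Hahn--Banach or the classical Young inequality. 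The delicate point throughout is keeping every convolution factor in the correct $L^r_v$ (equivalently $\ell^r_v$) space with $r\le\min(1,p,q)$, which is exactly where the superexponential decay from Theorem~\ref{potreban} is indispensable.
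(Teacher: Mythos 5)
You should first note that the paper itself gives no proof of Theorem \ref{Th-PoertyUltraMod}: it is collected from the literature (Galperin--Samarah for the quasi-Banach case, Toft for the Gelfand--Shilov/ultra-distribution setting), so there is no internal argument to compare against. Judged on its own, your outline is the standard route (change-of-window inequality, reproducing identity $F\mapsto\|g\|_2^{-2}F\ast V_gg$, Gabor discretization, sequence-space duality) and is essentially complete in the Banach range $1\le p,q\le\infty$.

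There is, however, a genuine gap exactly where you yourself locate the difficulty. Your central step applies ``Young's convolution inequality on $L^{p,q}_m$'' with $p=q=r<1$ to the pointwise bound $|V_\psi f|\le C\,(|V_\phi f|\ast|V_\psi\gamma|)$. The paper's sequence-space proposition gives Young with $p=q=r<1$ only for $\ell^r_m(\zd)$; the continuous analogue on $\rdd$ is false. For $r<1$ the space $L^r(\rdd)$ contains functions that are not even locally integrable, so the convolution of two general elements need not be defined, and no estimate $\|F\ast G\|_{L^r}\lesssim\|F\|_{L^r}\|G\|_{L^r}$ holds; knowing that the kernel $V_\psi\gamma$ lies in $L^r_v$ (which your superexponential-decay argument does give) is not sufficient. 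The actual content of the quasi-Banach case in Galperin--Samarah is the Wiener-amalgam layer you omit: one shows that $V_\phi f\in L^{p,q}_m$ forces $V_\phi f\in W(L^\infty,L^{p,q}_m)$ with equivalent norms (because $V_\phi f$ is itself a reproducing-kernel convolution), that $V_\psi\gamma\in W(L^\infty,L^r_v)$, and that $W(L^\infty,L^{p,q}_m)\ast W(L^\infty,L^r_v)\hookrightarrow W(L^\infty,L^{p,q}_m)$ for $r\le\min(1,p,q)$ --- the last inclusion being proved by discretization, which is where the $\ell^r$ Young inequality legitimately enters. Without this intermediate step, items (i) and (ii) are unproved for $\min(p,q)<1$, and the same defect propagates to the boundedness of the Gabor analysis and synthesis maps on which your proof of (iii) relies. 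A second, smaller gap: for (iv) with $0<p\le1$ you correctly observe that the Hahn--Banach route is unavailable, but you do not supply the replacement; the assertion $\left(M^{p,q}_m\right)'\cong M^{\infty,q'}_{1/m}$ for $p\le1$ has to be obtained by transporting the sequence-space duality $\left(\ell^{p,q}_m\right)'\cong\ell^{\infty,q'}_{1/m}$ through the Gabor retract, and this needs to be carried out, not merely flagged.
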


\begin{remark}
Point \textit{(ii)} of the previous theorem tell us that the definition of $M^{p,q}_m(\rd)$ is independent of the choice of the window. Moreover, it can be shown that the class for window functions can be extended from $\gel(\rd)$ to $M^r_v(\rd)$, where $r\leq p,q$ and $v\in\mathscr{P}_E(\rdd)$ is  submultiplicative and such that $m$ is $v$-moderate, \cite{ToftquasiBanach2017}.

We refer to \cite{Cordero-GelfandShilov-2007} for the density of $\gel(\rd)$ in $M^{p,q}_m(\rd)$.
\end{remark}

The following proposition is proved in e.g. \cite[Theorem 4.1]{T3}, \cite[Theorem 3.9]{Toft_2012_Bergmann}.

\begin{proposition}
	Consider $\ga\geq1$ and $0<p,q\leq\infty$. Then
	\begin{equation*}
		\cS^{(\ga)}(\rd)=\bigcap_{k\geq0}M^{p,q}_{w^\ga_k}(\rd),\qquad \cS^{(\ga)'}(\rd)=\bigcup_{k\geq0}M^{p,q}_{1/w^\ga_k}(\rd).
	\end{equation*}
\end{proposition}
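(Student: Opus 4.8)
The plan is to establish the first identity and then obtain the second by duality; throughout fix a nonzero window $g\in\gel(\rd)$, which by \eqref{Eq:GSEmbeddings} also lies in $\cS^{(\ga)}(\rd)$ since $\gel=\cS^{(1)}\hookrightarrow\cS^{(\ga)}$ for $\ga\ge1$. The first step is to show that the intersection is independent of $p,q$, namely $\bigcap_{k\ge0}M^{p,q}_{w^\ga_k}(\rd)=\bigcap_{k\ge0}M^\infty_{w^\ga_k}(\rd)$ for every $0<p,q\le\infty$. The inclusion $\supseteq$ is immediate: if $f\in M^\infty_{w^\ga_{k+1}}(\rd)$ then $|V_gf(z)|w^\ga_k(z)\lesssim e^{-|z|^{1/\ga}}$, which lies in every $L^{p,q}(\rdd)$ because of its subexponential decay. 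For the inclusion $\subseteq$ I use the reproducing inequality $|V_gf|\le\norm{g}_2^{-2}\,(|V_gf|\ast|V_gg|)$ and the submultiplicativity of $w^\ga_k$ to get $|V_gf|\,w^\ga_k\le\norm{g}_2^{-2}\,(|V_gf|\,w^\ga_k)\ast(|V_gg|\,w^\ga_k)$; since $g\in\cS^{(\ga)}(\rd)$ gives $V_gg\in\cS^{(\ga)}(\rdd)$ (Theorem \ref{potreban}(i)), the factor $|V_gg|\,w^\ga_k$ decays subexponentially and so belongs to every $L^r(\rdd)$. When $p,q\ge1$, Young's inequality $L^{p,q}\ast L^{p',q'}\hookrightarrow L^\infty$ then yields $f\in M^\infty_{w^\ga_k}(\rd)$; for the quasi-Banach range I first raise the small exponents by trading surplus weight for integrability, using Theorem \ref{Th-PoertyUltraMod}(iii) to embed $M^{p,q}_{w^\ga_{k+1}}(\rd)\hookrightarrow M^{p\vee1,q\vee1}_{w^\ga_k}(\rd)$, and then run the previous argument with exponents $\ge1$.

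Granting this reduction, it remains to prove $\cS^{(\ga)}(\rd)=\bigcap_{k\ge0}M^\infty_{w^\ga_k}(\rd)$. For $\subseteq$, if $f\in\cS^{(\ga)}(\rd)$ then $V_gf\in\cS^{(\ga)}(\rdd)$ by Theorem \ref{potreban}(i), and the Beurling (``for every'') version of Theorem \ref{simetrija}(iv) gives $|V_gf(z)|\lesssim_h e^{-h|z|^{1/\ga}}$ for every $h>0$; hence $\norm{V_gf\cdot w^\ga_k}_\infty<\infty$ for each $k$. For $\supseteq$ I start from $|V_gf(z)|\lesssim_N e^{-N|z|^{1/\ga}}$ for every $N>0$ and recover the two Gelfand-Shilov estimates of Theorem \ref{simetrija}(iv). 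Inserting the inversion formula $f=\norm{g}_2^{-2}\iintrdd V_gf(x,\o)\,M_\o T_x g\,dx\,d\o$ and using $w^\ga_h(t)\le w^\ga_h(x)\,w^\ga_h(t-x)$, I bound $|f(t)|\,w^\ga_h(t)$ by the convolution in $x$ of the two subexponentially decaying $L^1$-functions $\int_{\rd}|V_gf(\cdot,\o)|\,w^\ga_h(\cdot)\,d\o$ and $|g|\,w^\ga_h$, which is bounded; this gives $\norm{f\,e^{h|t|^{1/\ga}}}_\infty<\infty$ for every $h$. The matching decay of $\hat f$ follows symmetrically: by the fundamental identity $V_gf(x,\o)=e^{-2\pi ix\o}V_{\hat g}\hat f(\o,-x)$ the function $V_{\hat g}\hat f$ is $V_gf$ composed with a unimodular factor and a rotation of the phase space, hence decays subexponentially as well, while $\hat g\in\cS^{(\ga)}(\rd)$ by Fourier-invariance of $\Sigma_\ga^\ga$; repeating the inversion argument for $\hat f$ with window $\hat g$ yields $\norm{\hat f\,e^{k|\o|^{1/\ga}}}_\infty<\infty$ for every $k$. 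The equivalence (i)$\Leftrightarrow$(iv) in Theorem \ref{simetrija} now gives $f\in\cS^{(\ga)}(\rd)$.

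The second identity follows by duality. The same reduction, carried out with the reciprocal weights $1/w^\ga_k$ (trading integrability in the opposite direction), shows that $\bigcup_{k\ge0}M^{p,q}_{1/w^\ga_k}(\rd)$ is independent of $0<p,q\le\infty$, so it suffices to treat one pair, say $p=q=2$. Writing the first identity as the countable projective limit $\cS^{(\ga)}(\rd)=\bigcap_kM^2_{w^\ga_k}(\rd)$ and dualizing via the duality $(M^2_{w^\ga_k})'\cong M^2_{1/w^\ga_k}$ of Theorem \ref{Th-PoertyUltraMod}(iv) converts this projective limit into the inductive limit $\bigcup_kM^2_{1/w^\ga_k}(\rd)$, which one identifies with $\cS^{(\ga)'}(\rd)$. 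The main obstacle is the inclusion $\supseteq$ in the second paragraph: extracting full Gelfand-Shilov regularity, i.e. the simultaneous subexponential decay of $f$ and of $\hat f$, from the single assumption of weighted integrability of $V_gf$; the device that makes it work is the reproducing/inversion formula together with the symmetry $V_gf\leftrightarrow V_{\hat g}\hat f$. A secondary technical point is the careful weight-for-integrability trade-off needed to cover the quasi-Banach exponents.
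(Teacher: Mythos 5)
The paper does not actually prove this proposition: it is imported from the literature with references to \cite[Theorem 4.1]{T3} and \cite[Theorem 3.9]{Toft_2012_Bergmann}, so there is no internal argument to compare yours against. Your proof is correct and is essentially the standard one underlying those references: (a) reduce to $p=q=\infty$ via the reproducing inequality $|V_gf|\le\norm{g}_2^{-2}\,|V_gf|\ast|V_gg|$, the submultiplicativity of $w^\ga_k$, and the fact that $|V_gg|\,w^\ga_k$ lies in every $L^r(\rdd)$, handling the quasi-Banach exponents through Theorem \ref{Th-PoertyUltraMod}(iii); (b) identify $\bigcap_{k}M^\infty_{w^\ga_k}(\rd)$ with $\cS^{(\ga)}(\rd)$ by combining Theorem \ref{potreban} with the equivalence (i)$\Leftrightarrow$(iv) of Theorem \ref{simetrija}, using the inversion formula and the symmetry $|V_gf(x,\o)|=|V_{\hg}\hf(\o,-x)|$ to recover the subexponential decay of both $f$ and $\hf$; (c) dualize. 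Two places deserve one extra line each, though neither is a gap in substance. In (b) you should record that $V_gf\in L^1(\rdd)$ (immediate from the assumed decay), so that the inversion integral converges absolutely and $f=\iintrdd V_gf(z)\pi(z)g\,dz$ holds pointwise a.e.\ as an identity between continuous functions — Proposition \ref{prop:inversion} by itself only gives convergence in the $M^{p,q}_m$-sense, and your pointwise convolution estimate needs the pointwise identity. In (c), passing from the set equality $\cS^{(\ga)}(\rd)=\bigcap_kM^2_{w^\ga_k}(\rd)$ to the duality statement requires that the Fréchet topology generated by the norms $\norm{\cdot}_{M^2_{w^\ga_k}}$ coincide with the usual topology of $\cS^{(\ga)}(\rd)$ and that $\cS^{(\ga)}(\rd)$ be dense in each $M^2_{w^\ga_k}(\rd)$; both are true (density follows from Theorem \ref{Th-PoertyUltraMod}(iii)), but they should be stated before invoking the duality between countable projective and inductive limits.
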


In some situations it is convenient to consider (ultra)-modulation spaces as subspaces of $\cS^{\{1/2\}'}(\rd)$ (taking the window $g$ in $\cS^{\{1/2\}}(\rd)$), see for example \cite{Cordero-GelfandShilov-2007,ToftquasiBanach2017}. However, for our purposes it is sufficient to
consider the weights in $\mathscr{P}_E(\rdd)$, and then $M^{p,q}_m(\rd)$ is  a subspace of $\ult(\rd)$. We address the reader to \cite[Proposition 1.1]{ToftquasiBanach2017} and references quoted there for more details.

\par

We restate \cite[Proposition 2.6]{medit} in a simplified case suitable to our purposes.

\begin{proposition} \label{prop:inversion}
	Assume $1\leq p,q\leq\infty$, $m\in\mathscr{P}_E(\rdd)$ and $g\in\gel(\rd)$ such that $\norm{g}_2=1$. Then for every $f\in M^{p,q}_m(\rd)$ the following inversion formula holds true:
	\begin{equation}\label{invformula}
		f=\iintrdd V_gf\phas M_\o T_xg\,dxd\o,
	\end{equation}
	where the equality holds in $M^{p,q}_m(\rd)$.
\end{proposition}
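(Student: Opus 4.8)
The statement is the reconstruction identity $V_g^\ast V_g=\Id$ written as a weak integral, so the plan is to read the right-hand side of \eqref{invformula} as a vector-valued integral in $\ult(\rd)$ and to verify the equality by testing against $\gel(\rd)$. I would fix $h\in\gel(\rd)$ and pair both sides in the $\ult$--$\gel$ duality. Interpreting the integral weakly and using $\langle M_\o T_x g,h\rangle=\overline{V_gh\phas}$ reduces the pairing of the right-hand side with $h$ to
\begin{equation*}
\iintrdd V_gf\phas\,\overline{V_gh\phas}\,dxd\o=\langle V_gf,V_gh\rangle_{L^2(\rdd)}.
\end{equation*}

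Absolute convergence of this integral, which legitimizes the weak interpretation, follows from H\"older's inequality in the scale $L^{p,q}_m$: by Definition \ref{Def-UltraMod} we have $V_gf\in L^{p,q}_m(\rdd)$, while Theorem \ref{potreban}(i) gives $V_gh\in\gel(\rdd)$, and the subexponential decay of elements of $\gel(\rdd)$ guarantees $V_gh\in L^{p',q'}_{1/m}(\rdd)$ for every $m\in\mathscr{P}_E(\rdd)$ (recall that every such $m$ is $w_k$-moderate for some $k>0$, so $1/m$ grows at most like $w_k$, which is dominated by that decay). Hence $V_gf\cdot\overline{V_gh}\in L^1(\rdd)$.

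The decisive ingredient is the orthogonality (Moyal) relation
\begin{equation*}
\langle V_gf,V_gh\rangle_{L^2(\rdd)}=\|g\|_2^2\,\langle f,h\rangle=\langle f,h\rangle,
\end{equation*}
which I would first record for $f,h\in L^2(\rd)$ and then transfer to $f\in M^{p,q}_m(\rd)$, $h\in\gel(\rd)$. Both sides are continuous linear functionals of $f$ on $M^{p,q}_m(\rd)$: the left-hand side by the H\"older estimate above, the right-hand side because $h\in\gel(\rd)$ pairs continuously with $\ult(\rd)\supset M^{p,q}_m(\rd)$ (Theorem \ref{Th-PoertyUltraMod}(iii)). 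They coincide on $\gel(\rd)$ by the classical $L^2$-identity, hence on all of $M^{p,q}_m(\rd)$ by the density asserted in Theorem \ref{Th-PoertyUltraMod}(iii). Chaining the three displays yields $\langle \text{RHS of } \eqref{invformula},h\rangle=\langle f,h\rangle$ for every $h\in\gel(\rd)$; thus that integral equals $f$ as an element of $\ult(\rd)$, and since $f\in M^{p,q}_m(\rd)$ by hypothesis, the equality holds in $M^{p,q}_m(\rd)$ (no norm convergence of the integral is claimed, which is what allows the endpoints $p=\infty$ or $q=\infty$).

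I expect the main obstacle to be the transfer of Moyal's identity to the distributional pairing precisely in the endpoint cases $p=\infty$ or $q=\infty$, where $\gel(\rd)$ fails to be norm-dense in $M^{p,q}_m(\rd)$. There I would replace norm density by weak-$\ast$ density of $\gel(\rd)$ together with the weak-$\ast$ continuity of the two functionals, each being a pairing against a fixed element of the relevant predual; alternatively, one runs the density argument on the predual and passes to adjoints. The convergence and Fubini bookkeeping is routine once the Gelfand-Shilov decay of $V_gh$ is available.
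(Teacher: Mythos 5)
Your argument is sound, but note that the paper does not prove this proposition at all: it is stated as a restatement of \cite[Proposition 2.6]{medit} and the proof is delegated to that reference. So there is nothing to compare against except the standard literature, and your reconstruction follows the classical line (weak interpretation of the integral, mixed-norm H\"older for absolute convergence, Moyal's orthogonality relation, extension by density and continuity). The one point that genuinely needs care is the one you flag yourself: for $p=\infty$ or $q=\infty$ norm density of $\gel(\rd)$ in $M^{p,q}_m(\rd)$ fails, and the weak-$\ast$ continuity of $f\mapsto\langle V_gf,V_gh\rangle$ is not free --- identifying that functional with pairing against an element of the predual is essentially the statement being proved, so the argument risks circularity if phrased carelessly. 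A cleaner route that treats all $1\leq p,q\leq\infty$ uniformly and avoids density in $f$ altogether is to expand $h$ rather than $f$: since $g,h\in\gel(\rd)$, the integral $\iintrdd V_gh(z)\,\pi(z)g\,dz$ converges absolutely as a Bochner integral in each $M^1_{w_k}(\rd)$ and equals $h$ by the $L^2$-theory; then the continuous functional $\langle f,\cdot\rangle$ on $M^1_{w_k}(\rd)$ commutes with the Bochner integral, giving $\langle f,h\rangle=\iintrdd V_gf(z)\overline{V_gh(z)}\,dz$ directly for every $f\in M^{p,q}_m(\rd)\subset\ult(\rd)$. Either way the conclusion stands; your version just requires spelling out the adjoint/predual step at the endpoints.
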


The embeddings between modulation spaces are studied by many authors. We recall a recent contribution
\cite[Theorem 4.11]{Guo2019}, which is convenient for our purposes, and which will be used in Lemma \ref{Lem-inclusion-for-compactness}.

\begin{theorem}\label{Th-GuoChenFanZhao}
	Let $0<p_j,q_j\leq\infty$, $s_j,t_j\in\bR$ for $j=1,2$ and consider the polynomial weights $v_{t_j}$, $v_{s_j}$ defined as in \eqref{Eq-DefPolynomialWeights}. Then
	\begin{equation*}
		M^{p_1,q_1}_{v_{t_1}\otimes v_{s_1}}(\rd)\hookrightarrow M^{p_2,q_2}_{v_{t_2}\otimes v_{s_2}}(\rd)
	\end{equation*}
	if  the following two conditions hold true:
	\begin{itemize}
		\item[(i)] $(p_1,p_2,t_1,t_2)$ satisfies one of the following conditions:
		\begin{align*}
			(\cC_1)&\qquad\frac1{p_2}\leq\frac1{p_1},\qquad t_2\leq t_1,\\
			(\cC_2)&\qquad\frac1{p_2}>\frac1{p_1},\qquad\frac1{p_2}+\frac{t_2}{d}<\frac1{p_1}+\frac{t_1}{d};
		\end{align*}
		\item[(ii)] $(q_1,q_2,s_1,s_2)$ satisfies one  of the conditions $(\cC_1)$ or $(\cC_2)$ with $p_j$ and $t_j$ replaced by $q_j$ and $s_j$ respectively.
	\end{itemize}
\end{theorem}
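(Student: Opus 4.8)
The plan is to reduce the embedding to a purely combinatorial statement about weighted mixed-norm sequence spaces and to prove that statement by a one-dimensional weighted-$\ell^p$ inclusion applied twice. First I would fix a Gabor frame $\{\pi(\lambda)g\}_{\lambda\in\Lambda}$ with window $g\in\gel(\rd)$ and a lattice $\Lambda=\alpha\zd\times\beta\zd$ with $\alpha\beta$ small enough that the analysis operator $f\mapsto (V_gf(\lambda))_{\lambda}$ and an associated reconstruction operator are simultaneously bounded on all the modulation/sequence spaces occurring here; this is available in the weighted quasi-Banach Gelfand--Shilov setting and is compatible with Theorem \ref{Th-PoertyUltraMod}. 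Since the polynomial weights $v_{t_j}\otimes v_{s_j}$ are of polynomial type, hence moderate, their restrictions to $\Lambda$ are equivalent to $v_{t_j}\otimes v_{s_j}$ on $\zdd$, and one obtains the two-sided norm equivalences
\[
\|f\|_{M^{p_j,q_j}_{v_{t_j}\otimes v_{s_j}}}\asymp \big\|(V_gf(\lambda))_{\lambda}\big\|_{\ell^{p_j,q_j}_{v_{t_j}\otimes v_{s_j}}},\qquad j=1,2.
\]
Thus it suffices to establish the sequence inclusion $\ell^{p_1,q_1}_{v_{t_1}\otimes v_{s_1}}(\zdd)\hookrightarrow \ell^{p_2,q_2}_{v_{t_2}\otimes v_{s_2}}(\zdd)$, applied to the common coefficient sequence.

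The heart of the matter is the one-dimensional inclusion
\[
\ell^{p_1}_{v_{a}}(\zd)\hookrightarrow \ell^{p_2}_{v_{b}}(\zd),
\]
which I claim holds exactly under $(\cC_1)$ or $(\cC_2)$ (with $t_1,t_2$ replaced by $a,b$). If $(\cC_1)$ holds, then $1/p_2\le 1/p_1$ gives the unweighted inclusion $\ell^{p_1}\hookrightarrow\ell^{p_2}$, while $b\le a$ gives $v_b\le v_a$ pointwise on $\zd$; composing these two monotonicities yields the inclusion. If instead $(\cC_2)$ holds, I would write $|c_k|\langle k\rangle^{b}=(|c_k|\langle k\rangle^{a})\,\langle k\rangle^{b-a}$ and apply the H\"older inequality for weighted sequence spaces (with trivial weight) with exponents $p_1$ and $r$ determined by $1/p_2=1/p_1+1/r$, which is legitimate for the full range $0<p_1,r\le\infty$. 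This gives
\[
\|c\|_{\ell^{p_2}_{v_b}}\le \|c\|_{\ell^{p_1}_{v_a}}\,\big\|\langle\cdot\rangle^{b-a}\big\|_{\ell^{r}(\zd)},
\]
and the factor $\|\langle\cdot\rangle^{b-a}\|_{\ell^{r}}$ is finite precisely when $(b-a)r<-d$, i.e. $1/p_2+b/d<1/p_1+a/d$, which is exactly the strict inequality in $(\cC_2)$.

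To pass to the mixed norm I would iterate this inclusion in the two indices, exploiting that both the weight and the norm factor through the tensor structure: writing $b_n\coloneqq\|(a_{k,n})_k\|_{\ell^{p}_{v_{t}}}$ one has $\|a\|_{\ell^{p,q}_{v_t\otimes v_s}}=\|(b_n)_n\|_{\ell^q_{v_s}}$. Condition (i) applied to the inner index gives $\|(a_{k,n})_k\|_{\ell^{p_2}_{v_{t_2}}}\lesssim \|(a_{k,n})_k\|_{\ell^{p_1}_{v_{t_1}}}$ with a constant uniform in $n$; taking $\ell^{q_2}_{v_{s_2}}$-norms in $n$ and using this pointwise domination reduces matters to the outer inclusion $\ell^{q_1}_{v_{s_1}}(\zd)\hookrightarrow \ell^{q_2}_{v_{s_2}}(\zd)$, which holds by condition (ii) and the one-dimensional lemma. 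Chaining the two estimates gives $\|a\|_{\ell^{p_2,q_2}_{v_{t_2}\otimes v_{s_2}}}\lesssim \|a\|_{\ell^{p_1,q_1}_{v_{t_1}\otimes v_{s_1}}}$, and transporting back through the frame equivalence completes the proof. I expect the only genuinely delicate point to be the frame discretization step: one must ensure that a single Gabor frame with Gelfand--Shilov window furnishes the analysis and reconstruction bounds simultaneously for both $(p_1,q_1,v_{t_1}\otimes v_{s_1})$ and $(p_2,q_2,v_{t_2}\otimes v_{s_2})$, including the quasi-Banach range where $p_j$ or $q_j<1$; the sequence-space part is then elementary.
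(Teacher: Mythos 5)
Your argument is sound, but note first that the paper itself offers no proof of this statement: it is imported verbatim from the literature (Guo--Chen--Fan--Zhao, cited as \cite{Guo2019}, Theorem 4.11), so there is no internal proof to compare against. Judged on its own, your proposal is a correct and essentially self-contained proof of the sufficiency direction, which is all the paper's formulation asserts. The discretization step you flag as delicate is in fact already covered by Theorem \ref{framesmod}: a single pair $g,h\in\gel(\rd)$ with $S_{g,h}=I$ yields the equivalence $\|f\|_{M^{p,q}_m}\asymp\|(V_gf(\lambda))_\lambda\|_{\ell^{p,q}_m(\Lambda)}$ uniformly over all $0<p,q\le\infty$ and $m\in\mathscr{P}_E(\rdd)$, and since the same coefficient sequence $(V_gf(\lambda))_\lambda$ computes both norms, the problem does reduce to $\ell^{p_1,q_1}_{v_{t_1}\otimes v_{s_1}}(\zdd)\hookrightarrow\ell^{p_2,q_2}_{v_{t_2}\otimes v_{s_2}}(\zdd)$. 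Your one-dimensional lemma is correct in both cases: under $(\cC_1)$ the chain $\|c\|_{\ell^{p_2}_{v_b}}\le\|(c_k\langle k\rangle^{b})_k\|_{\ell^{p_1}}\le\|c\|_{\ell^{p_1}_{v_a}}$ works in the full quasi-Banach range, and under $(\cC_2)$ the exponent $r$ with $1/r=1/p_2-1/p_1$ is automatically finite and positive, the generalized H\"older inequality (the paper's sequence-space Proposition, item (iii)) applies for $0<p_1,r\le\infty$, and $\|\langle\cdot\rangle^{b-a}\|_{\ell^r(\zd)}<\infty$ is exactly equivalent to $1/p_2+b/d<1/p_1+a/d$. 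The passage to mixed norms via the exact factorization $\|a\|_{\ell^{p,q}_{v_t\otimes v_s}}=\big\|\big(\|(a_{k,n})_k\|_{\ell^p_{v_t}}\big)_n\big\|_{\ell^q_{v_s}}$, valid because the weight is a tensor product, is also correct. Two small caveats: your parenthetical claim that the one-dimensional inclusion holds \emph{exactly} under $(\cC_1)$ or $(\cC_2)$ asserts a necessity you neither need nor prove here; and the existence of a dual window $h\in\gel(\rd)$ for some lattice (e.g.\ a Gaussian at sufficiently dense $\Lambda$) should be invoked explicitly, since Theorem \ref{framesmod} presupposes it rather than providing it.
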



\subsection{Gabor Frames}
Consider a lattice $\Lambda\coloneqq\alpha\zd\times \beta\zd\subset\rdd$ for some $\al,\be>0$.
Given $g\in L^2(\rd)\smallsetminus\{0\}$, the set  of time-frequency shifts $\G(g,\Lambda)\coloneqq\{\pi(\lambda)g:\
\lambda\in\Lambda\}$ is called a
\textit{Gabor system}. The set $\G(g,\Lambda)$ is
a \textit{Gabor frame} if there exist constants $A,B>0$ such that
\begin{equation}\label{gaborframe}
A\|f\|_{L^2}^2\leq\sum_{\lambda\in\Lambda}|\langle f,\pi(\lambda)g\rangle|^2\leq B\|f\|^2_{L^2},\qquad \forall f\in L^2(\rd).
\end{equation}
If $\G(g,\Lambda)$  is a Gabor frame, then the \textit{frame operator}
\begin{equation*}
	Sf\coloneqq\sum_{\lambda\in\Lambda}\langle f,\pi(\lambda)g\rangle\pi(\lambda)g,\quad f\in\lrd,
\end{equation*}
is a topological isomorphism on $\lrd$. Moreover, if we define $h\coloneqq S^{-1}g\in\lrd$, then the system $\G(h,\Lambda)$ is a Gabor frame and we have the reproducing formulae
\begin{equation}\label{RF}
f=\sum_{\lambda\in\Lambda}\langle f,\pi(\lambda)g\rangle\pi(\lambda)h=\sum_{\lambda\in\Lambda}\langle f,\pi(\lambda)h\rangle \pi(\lambda)g,\quad \forall,f\in\lrd,
\end{equation}
with unconditional convergence in $L^2(\rd)$. The window $h$ is called the \textit{canonical dual window} of  $g$. In particular, if $h=g$ and $\|g\|_{L^2}=1$ then $A=B=1$, the frame operator is the identity on $L^2(\rd)$ and the Gabor frame is called \textit{Parseval} Gabor frame. In particular, from \eqref{gaborframe} we can recover exactly the $L^2$-norm of every vector:
\begin{equation*}
	\|f\|_{L^2}^2=\sum_{\lambda\in\Lambda}|\langle f,\pi(\lambda)g\rangle|^2,\quad \forall\,f\in L^2(\rd).
\end{equation*}
 Any window $u\in\lrd$ such that \eqref{RF} is satisfied is called \textit{alternative dual window} for $g$.
 Given two functions $g,h\in\lrd$ we are able to extend the notion of Gabor frame operator to the operator $S_{g,h}=S^{\Lambda}_{g,h}$ in the following way:
 \begin{equation*}
 	S_{g,h} f\coloneqq\sum_{\lambda\in\Lambda}\langle f,\pi(\lambda)g\rangle\pi(\lambda)h,\quad f\in\lrd,
 \end{equation*}
 whenever this is well defined. With this notation the reproducing formulae \eqref{RF} can be rephrased as  $S_{g,h}=I=S_{h,g}$, where $I$ is the identity on $\lrd$.

Discrete equivalent norms produced by means of Gabor frames make of ultra-modulation spaces a natural framework for time-frequency analysis.  We address the reader to  \cite{Galperin2004,Grochenig_2001_Foundations,Toft-GaborAnalysis-2015,ToftquasiBanach2017}.
\begin{theorem}\label{framesmod}
Consider $m,v\in\mathscr{P}_E(\rdd)$ such that $v$ is submultiplicative and $m$ is $v$-moderate. Take $\Lambda\coloneqq\a \zd\times \b \zd$, for some $\al,\be>0$,  and $g,h\in \gel(\rd)$ such that $S_{g,h}=I$ on $\lrd$. Then
\begin{equation*}\label{RFmpq}
f=\sum_{\lambda\in\Lambda}\langle f,\pi(\lambda)g\rangle\pi(\lambda)h=\sum_{\lambda\in\Lambda}\langle f,\pi(\lambda)h\rangle \pi(\lambda)g,\quad\forall\,f\in M^{p,q}_m(\rd),
\end{equation*}
with unconditional convergence in $M^{p,q}_m(\rd)$ if $0<p,q<\infty$ and with weak-* convergence in $M^\infty_{1/v}(\rd)$ otherwise. Moreover, there exist $0<A\leq B$ such that, for every $f\in M^{p,q}_m(\rd)$,
 \begin{equation*}\label{gaborframe2}
 A\|f\|_{M^{p,q}_m}\leq\left(\sum_{n\in\zd}\left(\sum_{k\in\zd}|\langle f,\pi(\a k,\beta n)g\rangle|^p m(\a k, \beta n)^p\right)^{\frac qp}\right)^{\frac1q}\leq B\|f\|_{M^{p,q}_m},
 \end{equation*}
 independently of $p,q$, and $m$. Equivalently:
 \begin{equation}\label{Eq-EquivalenceContDiscNormUltraMod}
 \|f\|_{M^{p,q}_m(\rd)}\asymp \|(\la f ,\pi(\lambda)g\ra)_\lambda \|_{\ell^{p,q}_{m}(\Lambda)}= \|(V_g f(\lambda))_\lambda\|_{\ell^{p,q}_{m}(\Lambda)}.
 \end{equation}
 Similar inequalities hold with $g$ replaced by $h$.
\end{theorem}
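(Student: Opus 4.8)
The plan is to reduce the theorem to the two-sided norm equivalence \eqref{Eq-EquivalenceContDiscNormUltraMod}, from which both reproducing formulae and the assertions about their convergence follow. Throughout I would exploit Theorem \ref{Th-PoertyUltraMod}(ii): since the (quasi-)norm of $M^{p,q}_m(\rd)$ is independent of the analysis window, one may freely choose convenient windows in $\gel(\rd)$ when computing STFTs. The structural fact driving everything is that for $g,h\in\gel(\rd)=\cS^{(1)}(\rd)$ one has $V_gh\in\gel(\rdd)$ by Theorem \ref{potreban}; hence $|V_gh(z)|\lesssim e^{-c|z|}$ for every $c>0$, which dominates the submultiplicative weight $v$ (equivalent, by the folklore lemma above, to some $w_k(z)=e^{k|z|}$). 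Consequently the sequence of local suprema $d_\nu\coloneqq\sup_{w\in\nu+Q}|V_gh(w)|$, where $Q$ is a fundamental domain of $\Lambda$, lies in $\ell^1_v(\Lambda)$, and likewise $(V_gh(\lambda))_\lambda\in\ell^1_v(\Lambda)$. This single decay estimate is what both inequalities rest upon.

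For the upper (sampling) bound $\|(V_gf(\lambda))_\lambda\|_{\ell^{p,q}_m}\lesssim\|f\|_{M^{p,q}_m}$ I would combine the reproducing-kernel property of the STFT with a local-maximum comparison: estimating $|V_gf(z)|$ on each box $\lambda+Q$ by a discrete convolution of the box-suprema of $V_gf$ with the sequence $d_\nu$ transfers the continuous $L^{p,q}_m$-control to the lattice samples. Moderateness of $m$ with respect to $v$ moves the weight across the convolution, and the weighted discrete Young convolution inequality for $\ell^{p,q}_m(\Lambda)$ closes the estimate; in the quasi-Banach regime this uses Young's inequality in its restricted form $p=q=r$.

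For the lower bound $\|f\|_{M^{p,q}_m}\lesssim\|(V_gf(\lambda))_\lambda\|_{\ell^{p,q}_m}$ I would insert the reproducing formula. Assuming first $f\in\gel(\rd)$, which is dense by Theorem \ref{Th-PoertyUltraMod}(iii), and using $S_{g,h}=I$, one expands $f=\sum_\lambda\langle f,\pi(\lambda)g\rangle\,\pi(\lambda)h$ and computes, for a fixed window $\psi\in\gel(\rd)$,
\[
|V_\psi f(z)|\le\sum_\lambda|\langle f,\pi(\lambda)g\rangle|\,|V_\psi h(z-\lambda)|.
\]
Passing to box-suprema over $\lambda+Q$ dominates the discretized norm of $V_\psi f$ by the discrete convolution $(|\langle f,\pi(\lambda)g\rangle|)_\lambda\ast(d'_\nu)_\nu$, where $d'_\nu$ are the box-suprema of $V_\psi h$ and again lie in $\ell^1_v(\Lambda)$. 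The weighted Young inequality then gives $\|V_\psi f\|_{L^{p,q}_m}\lesssim\|(\langle f,\pi(\lambda)g\rangle)_\lambda\|_{\ell^{p,q}_m}$, which is the claim since $\|f\|_{M^{p,q}_m}=\|V_\psi f\|_{L^{p,q}_m}$.

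Finally I would treat convergence and remove the restriction $f\in\gel(\rd)$. The two bounds show that the coefficient map $f\mapsto(\langle f,\pi(\lambda)g\rangle)_\lambda$ is a (quasi-)isomorphism of $M^{p,q}_m(\rd)$ onto a closed subspace of $\ell^{p,q}_m(\Lambda)$, while the synthesis map $(c_\lambda)\mapsto\sum_\lambda c_\lambda\pi(\lambda)h$ is bounded from $\ell^{p,q}_m(\Lambda)$ into $M^{p,q}_m(\rd)$; composing them recovers the reproducing formula on all of $M^{p,q}_m(\rd)$ by density, with unconditional convergence when $p,q<\infty$. For the endpoint cases one passes to the dual pairing and invokes Theorem \ref{Th-PoertyUltraMod}(iv) together with the density in Theorem \ref{Th-PoertyUltraMod}(iii) to obtain weak-$*$ convergence in $M^\infty_{1/v}(\rd)$. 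I expect the main obstacle to be the quasi-Banach regime $\min\{p,q\}<1$: there the triangle inequality is unavailable, Young's convolution inequality holds only in the degenerate form $p=q=r$, and the density and duality arguments underlying convergence must be arranged without recourse to reflexivity, so the box-decomposition estimates have to be carried out with $p$-subadditivity throughout.
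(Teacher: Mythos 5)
The paper offers no proof of Theorem \ref{framesmod}: it is stated as a known result and the reader is sent to \cite{Galperin2004,Grochenig_2001_Foundations,Toft-GaborAnalysis-2015,ToftquasiBanach2017}, so there is no in-paper argument to compare yours against. Your sketch reconstructs the standard proof from those references --- super-exponential decay of $V_gh$ for $g,h\in\gel(\rd)$ dominating any submultiplicative $v\in\mathscr{P}_E$, box-suprema (Wiener amalgam) estimates for the sampling inequality, boundedness of the synthesis operator via a discrete convolution for the converse inequality, and a density/weak-$*$ argument for the convergence claims --- and it is sound in outline. The one step you should make more precise is the convolution inequality in the mixed-norm quasi-Banach regime: when $p\neq q$ and $\min\{p,q\}<1$, the scalar case $p=q=r$ recorded in the paper's proposition on sequence spaces does not literally apply; what is needed is the weighted mixed-norm version $\ell^{p,q}_m(\Lambda)\ast\ell^{r}_v(\Lambda)\hookrightarrow\ell^{p,q}_m(\Lambda)$ with $r=\min\{1,p,q\}$ from \cite{Galperin2014}, and it is available here only because the box-suprema of $V_gh$ (resp.\ $V_\psi h$) lie in $\ell^{s}_v(\Lambda)$ for \emph{every} $s>0$, which is exactly what the $\cS^{(1)}$ decay of the cross-ambiguity function buys you. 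With that adjustment, and noting that for $p=\infty$ or $q=\infty$ the passage from $\gel(\rd)$ to the whole space must go through weak-$*$ density against the predual $M^1_v(\rd)$ rather than norm density, your argument matches the cited proofs.
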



Now we are able to prove the  convolution relations for ultra-modulations spaces which will be used to prove our main results in Section
\ref{main results}. For the Banach cases with weight of at most polynomial growth at infinity, convolution relations were studied in e.g \cite{EleCharly2003,toft1,Toftweight2004}. We modify the technique used in \cite{BasCorNic2019} to the Gelfand-Shilov framework presented so far. The essential tool is the equivalence between continuous and discrete norm \eqref{Eq-EquivalenceContDiscNormUltraMod}.

\begin{proposition}\label{Pro-ConvRel}
	Let there be given $0<	p,q,r,t,u,\gamma\leq\infty$ such that
	\begin{equation*}\label{Holderindices}
	\frac 1u+\frac 1t=\frac 1\gamma,
	\end{equation*}
	and
	\begin{equation*}\label{Youngindicesrbig}
	\frac1p+\frac1q=1+\frac1r,\quad \,\, \text{ for } \, 1\leq r\leq \infty
	\end{equation*}
	whereas
	\begin{equation*}\label{Youngindicesrbig}
	p=q=r,\quad \,\, \text{ for } \, 0<r<1.
	\end{equation*}
	Consider  $m,v, \nu \in\mathscr{P}_E(\rdd)$ such that $m$ is $v$-moderate. Then
	\begin{equation*}\label{mconvm}
	M^{p,u}_{m|_1\otimes \nu}(\Ren)\ast  M^{q,t}_{v|_1\otimes
		v|_2\nu^{-1}}(\Ren)\hookrightarrow M^{r,\gamma}_m(\Ren),
	\end{equation*}
	where $m|_1$, $v|_1$, $v|_2$ are defined as in \eqref{Eq-Restrictions}.
\end{proposition}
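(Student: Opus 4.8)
The plan is to reduce the claimed embedding to the Young and Hölder inequalities for the sequence spaces $\ell^{p,q}_m$ recalled above, using the Gabor-frame equivalence of continuous and discrete norms \eqref{Eq-EquivalenceContDiscNormUltraMod}. The passage to sequences is not a convenience but a necessity: for $0<r<1$ the continuous Young inequality is unavailable, whereas the sequence version holds precisely under $p=q=r$, the very regime singled out in the statement. The engine of the argument is a pointwise convolution identity for the STFT. Writing $V_gf\phas=e^{-2\pi ix\o}(f\ast M_\o g^\ast)(x)$ and choosing windows $g_1,g_2\in\gel(\rd)\setminus\{0\}$ whose convolution $g\coloneqq g_1\ast g_2$ is again a non-zero element of $\gel(\rd)$ (recall $\gel=\cS^{(1)}$ is a convolution algebra, so a Gaussian will do), a short computation based on $M_\o(h_1\ast h_2)=(M_\o h_1)\ast(M_\o h_2)$ and $(g_1\ast g_2)^\ast=g_1^\ast\ast g_2^\ast$ yields
\begin{equation*}
V_{g}(f_1\ast f_2)\phas=\big(V_{g_1}f_1(\cdot,\o)\ast V_{g_2}f_2(\cdot,\o)\big)(x),
\end{equation*}
the convolution on the right being taken in the position variable only. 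By window-independence (Theorem \ref{Th-PoertyUltraMod}(ii)) I may then compute the three modulation norms with the windows $g_1$, $g_2$ and $g$, respectively.

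Next comes the weight bookkeeping, which is where the auxiliary weight $\nu$ is handled. Applying $v$-moderateness of $m$ twice, shifting first in the position and then in the frequency, together with the restriction notation \eqref{Eq-Restrictions}, one gets
\begin{equation*}
m\phas\lesssim v|_1(x-y)\,v|_2(\o)\,m|_1(y),\qquad x,y,\o\in\rd.
\end{equation*}
In the frequency variable the two source weights multiply to $\nu(\o)\cdot v|_2(\o)\nu(\o)^{-1}=v|_2(\o)$, so $\nu$ cancels; distributing $m|_1(y)$ to the factor carrying $f_1$ and $v|_1(x-y)$ to the one carrying $f_2$, the target weight is dominated by the product of the two source weights evaluated at the arguments $(y,\o)$ and $(x-y,\o)$ appearing in the convolution.

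I would then discretize over $\Lambda=\al\zd\times\be\zd$ through \eqref{Eq-EquivalenceContDiscNormUltraMod} and introduce
\begin{equation*}
A_{j,n}\coloneqq|V_{g_1}f_1(\al j,\be n)|\,m|_1(\al j)\,\nu(\be n),\qquad B_{j,n}\coloneqq|V_{g_2}f_2(\al j,\be n)|\,v|_1(\al j)\,v|_2(\be n)\,\nu(\be n)^{-1},
\end{equation*}
so that $\|A\|_{\ell^{p,u}}\asymp\|f_1\|_{M^{p,u}_{m|_1\otimes\nu}}$ and $\|B\|_{\ell^{q,t}}\asymp\|f_2\|_{M^{q,t}_{v|_1\otimes v|_2\nu^{-1}}}$. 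Combining the convolution identity with the weight estimate leads to a pointwise domination of the sampled coefficients $C_{k,n}\coloneqq|V_g(f_1\ast f_2)(\al k,\be n)|\,m(\al k,\be n)$ by a discrete convolution in the position index, $C(\cdot,n)\lesssim A(\cdot,n)\ast B(\cdot,n)$ for each fixed $n$. The sequence Young inequality in $k$ (with $1/p+1/q=1+1/r$ for $r\geq1$ and $p=q=r$ for $r<1$) followed by the sequence Hölder inequality in $n$ (with $1/u+1/t=1/\gamma$) then gives $\|C\|_{\ell^{r,\gamma}}\lesssim\|A\|_{\ell^{p,u}}\|B\|_{\ell^{q,t}}$, which by \eqref{Eq-EquivalenceContDiscNormUltraMod} is precisely $\|f_1\ast f_2\|_{M^{r,\gamma}_m}\lesssim\|f_1\|_{M^{p,u}_{m|_1\otimes\nu}}\|f_2\|_{M^{q,t}_{v|_1\otimes v|_2\nu^{-1}}}$.

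The hard part is the honest justification of the continuous-to-discrete step: bounding the sampled integral $V_g(f_1\ast f_2)(\al k,\be n)=\int V_{g_1}f_1(y,\be n)\,V_{g_2}f_2(\al k-y,\be n)\,dy$ by the discrete convolution above. I would split $\rd$ into the lattice cubes $\al j+\al[0,1)^d$ and replace each $|V_{g_j}f_j|$ by its supremum over the cube; the equivalence of the resulting local-supremum sequences with the modulation norms (a Wiener-amalgam type, local-maximal characterization) is exactly where the Gelfand--Shilov regularity of $g_1,g_2$, and the consequent control of the local variation of $V_{g_1}f_1$ and $V_{g_2}f_2$, is used.
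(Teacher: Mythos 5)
Your proposal follows essentially the same route as the paper's proof: the STFT convolution identity obtained from a split window $g=g_0\ast g_0$ via $V_gf\phas=e^{-2\pi ix\o}(f\ast M_\o g^{\ast})(x)$, the moderateness estimate $m(\alpha k,\beta n)\lesssim m|_1(\alpha k)\,v|_2(\beta n)$, discretization through the Gabor-frame norm equivalence \eqref{Eq-EquivalenceContDiscNormUltraMod}, and then the sequence Young inequality in the lattice position index followed by the sequence H\"older inequality (with the weight $\nu$ cancelling) in the frequency index. The only real difference is that you explicitly flag the continuous-to-discrete sampling step as the point requiring a Wiener-amalgam/local-maximal justification, whereas the paper passes over it silently, deferring to the technique of \cite{EleCharly2003} and \cite{BasCorNic2019}; your treatment is, if anything, the more careful one there.
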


\begin{proof}
First observe that due to Lemma \ref{Lem-Weight1} and Lemma \ref{Lem-Weight2} it follows that the ultra-modulation spaces which came into play are well defined.

The main tool is the idea contained in \cite[Proposition 2.4]{EleCharly2003}. We take the ultra-modulation norm with respect to the Gaussian windows	$g_0(x) \coloneqq e^{-\pi{x^2}}\in\cS^{\{1/2\}}(\rd)$ and
	$g(x)\coloneqq2^{-d/2}e^{-\pi x^2/2} = (g_0\ast g_0)(x)\in\cS^{\{1/2\}}(\rd)$.

	Since the involution operator $g^\ast(x)= \overline{g(-x)}$ and the modulation operator $M_\o$ commute, by a direct computation we have
	\begin{equation*}
	M_\o(g_0^*\ast
	g_0^*)=M_\o g_0^*\ast  M_\o g_0^*
	\end{equation*}
	and
	\begin{equation*}
		V_g f\phas =e^{-2\pi i x\o}(f\ast M_\o g^*)(x).
	\end{equation*}
  Thus, by using the associativity and commutativity of the convolution product, we obtain
  \begin{equation*}
  	V_g(f\ast h)\phas =e^{-2\pi i x\o}\big( (f\ast h)\ast M_\o g^*\big)(x)
  	=e^{-2\pi i x\o}\big( (f\ast M_\o g_0^*) \ast (h\ast M_\omega g_0^*)
  	\big)(x) \, .
  \end{equation*}
	We use the norm equivalence \eqref{Eq-EquivalenceContDiscNormUltraMod} for a suitable $\Lambda=\a\zd\times\b\zd$, and then  the $v$-moderateness in order to majorize $m$:
	\begin{equation*}
		m(\a k, \b n) \lesssim \linebreak[3] m(\a k,0)v(0,\b n ) = m|_1(\a k) v|_2(\b n).
	\end{equation*}

	Eventually Young's convolution 	inequality for sequences is used in the $k$-variable  and
	H\"older's one in the $n $-variable. Indeed both inequalities can be used since $p,q,r,\gamma,t,u$
fulfill the assumptions of the proposition. We write in details the case when $r,\gamma,t,u<\infty$,
and leave to the reader  the remaining cases, when one among the indices $r,\gamma,t,u$ is equal to $\infty$, which can be done analogously.

	\begin{multline*}
		\|f\ast h\|_{M^{r,\gamma}_m} \asymp  \|((V_{g}(f\ast h))(\a k,\b n)m(\a k, \b n))_{k,n}\|_{\ell^{r,\gamma}(\zdd)}\\
\lesssim \left( \sum_{n\in\zd} \left( \sum_ {k\in\zd} |(f\ast M_{\b n} g_0^*) \ast (h\ast
		M_{\b n} g_0^*) (\a k)|^r m|_1(\a k) ^r \,\right) ^{\gamma/r} \, v|_2(\b n) ^\gamma
	 \right)^{1/\gamma} \\
= \left( \sum_{n\in\zd} \| (f\ast M_{\b n} g_0^*) \ast (h\ast
		M_{\b n} g_0^*) \|^\gamma_{\ell^{r}_{ m|_1}(\a\zd)} v|_2(\b n)^\gamma
		\right)^{1/\gamma}\\
\lesssim  \left(\sum_{n\in\zd}\|f\ast M_{\b n} g_0^*\|_{\ell^{p}_{m|_1}(\a\zd)}^\gamma\,
		\|h\ast M_{\b n} g_0^*\|_{\ell^{q}_{v|_1}(\a\zd)}^\gamma\, v|_2(\b n)^\gamma\,\right)^{1/\gamma}\\
\lesssim  \left(\sum_{n\in\zd} \!\!\|f\ast M_{\b n} g_0^*\|_{\ell^{p}_{m|_1}(\a\zd)}^{u} \nu
		(\b n )^{u}   \right) ^{\frac{1}{u}} \!
		\left( \sum_{n\in\zd} \!\!\|h\ast M_{\b n} g_0^*\|_{\ell^{q}_{v|_1}(\a\zd)}^{t}\frac{v|
			_2(\b n)^{t}}{\nu(\b n)^{t}}\! \right) ^{\frac{1}{t}} \\
= \|((V_{g_0}f)(\lambda))_{\lambda}\|_{\ell^{p,u}_{m|_1\otimes\nu}(\Lambda)} \,\,		 \|((V_{g_0}h)(\lambda))_{\lambda}\|_{\ell^{q,t}_{v|_1\otimes v|_2\nu \inv}(\Lambda)}\\
\asymp  \|f\|_{M^{p,u}_{m|_1\otimes \nu }} \,\,
		\|h\|_{M^{q,t}_{v|_1\otimes v|_2 \nu \inv }}.
	\end{multline*}
This concludes the proof.
\end{proof}

\section{Main Results} \label{main results}

An important relation between the action of an operator $\Opt(\sigma)$ on time-frequency shifts and the STFT of its symbol $\sigma$
is explained in \cite{CordNicTra2019}.
The setting given there is the one of $\cS(\rd)$ and $\cS'(\rd)$, but it is easy to see that the claim is still valid when dealing with $\gel(\rd)$ and $\ult(\rd)$. Moreover, $\gel(\rd)$ and its dual can be replaced by $\cS^{\{\ga\}}(\rd)$ and $\cS^{\{\ga\}'}(\rd)$ as it is done in \cite{CorNicRod-2014} when $\tau=1/2$.
Thus, the proof of the following lemma is omitted, since it follows by a slight modification of the proof of \cite[Lemma 4.1]{CordNicTra2019}.

\begin{lemma}\label{lemma41} Consider $\tau\in[0,1]$, $g\in \gel(\rd)$, $\Phi_\tau\coloneqq W_\tau(g,g)\in\gel(\rdd)$. If $\sigma\in \ult(\rdd)$, then
	\begin{equation}\label{311}
	|\la \Opt(\sigma)\pi(z)g,\pi(w) g\ra|=\left|V_{\Phi_\tau} \sigma\left(\cT_\tau(w,z),J(w-z)\right)\right|, \quad\forall\,z,w\in\rdd,
	\end{equation}
	where $z=(z_1,z_2),w=(w_1,w_2)\in\rdd$ and $\cT_\tau$ and $J$ are defined as follows:
	\begin{equation}\label{Eq-def-T-J}
		\cT_\tau(w,z)\coloneqq\left((1-\tau)w_1+\tau z_1,\tau w_2+(1-\tau)z_2\right),\qquad J(z)\coloneqq(z_2,-z_1).
	\end{equation}
\end{lemma}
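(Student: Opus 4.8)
The plan is to reduce the matrix coefficient to a short-time Fourier transform of $\sigma$ by combining the duality \eqref{optauwigner} with the covariance of the cross-$\tau$-Wigner distribution under time-frequency shifts. First I would apply \eqref{optauwigner} with the two entries taken to be $\pi(z)g$ and $\pi(w)g$, which gives
\begin{equation*}
\la \Opt(\sigma)\pi(z)g,\pi(w)g\ra = \la \sigma, W_\tau(\pi(w)g,\pi(z)g)\ra .
\end{equation*}
The problem is thereby shifted onto identifying $W_\tau(\pi(w)g,\pi(z)g)$ as a time-frequency shift, on the phase space $\rdd$, of $\Phi_\tau=W_\tau(g,g)$, up to a unimodular constant.

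The core step is this covariance computation. Writing $\pi(w)g=M_{w_2}T_{w_1}g$ and $\pi(z)g=M_{z_2}T_{z_1}g$, I would insert these into Definition \ref{def:tauwigner}, factor out all exponentials not depending on the integration variable $t$, and then perform the change of variable $t\mapsto t+(w_1-z_1)$. After this substitution the two arguments of $g$ inside the integral become $x'+\tau t$ and $x'-(1-\tau)t$ with $x'=x-(1-\tau)w_1-\tau z_1$, while the $t$-linear phase reduces to $e^{-2\pi i t\omega'}$ with $\omega'=\omega-\tau w_2-(1-\tau)z_2$; the residual integral is therefore exactly $\Phi_\tau(x',\omega')$, and $(x',\omega')=(x,\omega)-\cT_\tau(w,z)$. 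What remains outside the integral splits into a genuine modulation $e^{2\pi i[(w_2-z_2)x-(w_1-z_1)\omega]}$ and a constant phase $c_0=e^{2\pi i(w_1-z_1)(\tau w_2+(1-\tau)z_2)}$ of modulus one. Reading the modulation frequencies as $(w_2-z_2,\,z_1-w_1)=J(w-z)$, this yields
\begin{equation*}
W_\tau(\pi(w)g,\pi(z)g)=c_0\, M_{J(w-z)}T_{\cT_\tau(w,z)}\Phi_\tau .
\end{equation*}

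To conclude, I would substitute this identity into the pairing and pull the scalar $c_0$ out of the second slot, where it becomes $\overline{c_0}$ by conjugate-linearity of $\la\cdot,\cdot\ra$. By Definition \ref{def:stft} the remaining pairing $\la\sigma,M_{J(w-z)}T_{\cT_\tau(w,z)}\Phi_\tau\ra$ equals $V_{\Phi_\tau}\sigma\big(\cT_\tau(w,z),J(w-z)\big)$, and this is well defined since $\Phi_\tau\in\gel(\rdd)$ by Theorem \ref{potreban} and $\sigma\in\ult(\rdd)$. Taking absolute values and using $\abs{c_0}=1$ erases the phase and delivers the claimed equality. The only delicate point I anticipate is the covariance computation itself: one must track carefully which exponential factors depend on $t$, on $x$, on $\omega$, or on none of these, since it is exactly the separation of the $(x,\omega)$-dependent modulation from the constant phase that produces $J(w-z)$ and makes the final modulus independent of the leftover scalar.
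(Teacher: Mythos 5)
Your proof is correct, and it is exactly the argument behind the result the paper invokes: the paper omits the proof and refers to \cite[Lemma 4.1]{CordNicTra2019}, whose proof is precisely this combination of the duality \eqref{optauwigner} with the covariance identity $W_\tau(\pi(w)g,\pi(z)g)=c_0\,M_{J(w-z)}T_{\cT_\tau(w,z)}\Phi_\tau$, $\abs{c_0}=1$. Your bookkeeping of the phases (the $t$-dependent, $(x,\omega)$-dependent, and constant factors) and the resulting identification of $J(w-z)$ and $\cT_\tau(w,z)$ all check out.
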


The following lemma can be viewed as a form of the inversion formula \eqref{invformula}. The independent proof is given in the Appendix.

\begin{lemma} \label{lm:optaustft}
Let $\tau\in[0,1]$ and $\sigma \in \ult(\R^{2d})$. If
$g \in \gel(\rd)$ with $\norm{g}_{L^2}=1$ and $f\in\gel(\rd)$, then
\begin{equation}\label{Eq-Proof-ii}
\Opt(\sigma) f=\intrdd V_gf(z)\Opt(\sigma)(\pi(z)g)\,dz,
\end{equation}
in the sense that
\begin{equation*}
\<\Opt(\sigma) f,\f\>=\intrdd V_gf(z)\,\<\Opt(\sigma)(\pi(z)g),\f\>\,dz,\qquad\forall\,\f\in\gel(\rd).
\end{equation*}
\end{lemma}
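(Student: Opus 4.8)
The plan is to feed the continuous operator $\Opt(\sigma)$ with the reproducing formula \eqref{invformula} for $f$, and to make the resulting interchange legitimate. Concretely, for fixed $\f\in\gel(\rd)$ I would record that Proposition \ref{prop:inversion} realizes $f$ as the vector-valued integral $f=\intrdd V_gf(z)\,\pi(z)g\,dz$; since $\Opt(\sigma)\colon\gel(\rd)\to\ult(\rd)$ is continuous and linear, applying it and exchanging it with the integral would yield \eqref{Eq-Proof-ii}, and pairing with $\f$ would give the asserted weak identity. The whole content is thus to make this integral converge in a topology strong enough for the interchange, namely the Fréchet topology of $\gel(\rd)$ itself, rather than merely in a single modulation space.

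To secure this I would check Bochner integrability of the $\gel(\rd)$-valued integrand $z\mapsto V_gf(z)\,\pi(z)g$. The map $z\mapsto\pi(z)g$ is continuous from $\rdd$ into $\gel(\rd)$, and for each Gelfand--Shilov seminorm one has at most exponential growth $\norm{\pi(z)g}_{S^{1}_{1;h}}\lesssim e^{c(h)\abs{z}}$ in $z$. On the other hand $f,g\in\gel(\rd)$ force $V_gf\in\gel(\rdd)=\cS^{(1)}(\rdd)$ by Theorem \ref{potreban}(i), whence by Theorem \ref{simetrija} the coefficient decays faster than any exponential: $\abs{V_gf(z)}\lesssim_N e^{-N\abs{z}}$ for every $N>0$. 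Choosing $N>c(h)$ makes $z\mapsto\norm{V_gf(z)\,\pi(z)g}_{S^{1}_{1;h}}$ integrable for every $h$, so the integrand is Bochner integrable in $\gel(\rd)$ and its integral is a genuine element of $\gel(\rd)$; by the continuous embedding $\gel(\rd)\hookrightarrow M^{p,q}_m(\rd)$ (Theorem \ref{Th-PoertyUltraMod}(iii)) together with Proposition \ref{prop:inversion}, this element coincides with $f$.

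With $\gel(\rd)$-convergence in hand the interchange is automatic, since a continuous linear map commutes with a Bochner integral; thus $\Opt(\sigma)f=\intrdd V_gf(z)\,\Opt(\sigma)(\pi(z)g)\,dz$ holds in $\ult(\rd)$, and testing against $\f$ produces exactly the claimed formula. Equivalently, one may avoid vector-valued integration and argue weakly: use \eqref{optauwigner} to write $\langle\Opt(\sigma)f,\f\rangle=\langle\sigma,W_\tau(\f,f)\rangle$, insert the inversion formula into the conjugate-linear slot of $W_\tau(\f,\cdot)$, and then pull the ultra-distribution $\sigma$ through the integral before reading the result back off via \eqref{optauwigner}; the justification for moving $\sigma$ inside is again the $\gel(\rdd)$-convergence guaranteed by the same decay-versus-growth comparison. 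In either route the single genuine obstacle is precisely this convergence of the inversion integral in the topology of $\gel$; once the competing rates above are compared, every remaining step is a continuity argument.
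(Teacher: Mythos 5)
Your argument is correct, but it reaches the conclusion by a genuinely different route than the paper. The paper's proof is a hands-on computation: for $\tau\in(0,1)$ it rewrites $W_\tau(\f,f)$ through the identity $W_\tau(\f,f)\phas=\tau^{-d}e^{2\pi i\o x/\tau}V_{\cA_\tau f}\f\bigl(\tfrac{x}{1-\tau},\tfrac{\o}{\tau}\bigr)$, exposes $f$ inside a single bracket $\overline{\la f,\cdot\ra}$, inserts the inversion formula there, and unwinds the same chain of identities backwards to get $W_\tau(\f,f)=\int\overline{V_gf(z)}\,W_\tau(\f,\pi(z)g)\,dz$ before pulling $\sigma$ through; the endpoints $\tau=0,1$ are then handled separately with Fourier-transform identities and the matrix $J$. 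Your primary route instead lifts the inversion formula to a Bochner integral in the Fr\'echet space $\gel(\rd)$ and commutes the continuous map $\Opt(\sigma)$ with it, which eliminates the case analysis in $\tau$ entirely and makes the single analytic issue explicit: the superexponential decay $\abs{V_gf(z)}\lesssim_N e^{-N\abs{z}}$ (from Theorems \ref{potreban} and \ref{simetrija}) beats the at most exponential growth of the seminorms $\norm{\pi(z)g}_{S^1_{1;h}}$. Both of these ingredients are correct, though the seminorm growth estimate for $z\mapsto\pi(z)g$ is used without proof and deserves to be recorded as a short lemma (it follows from $\abs{x}^n\le n!\,\ep^{-n}e^{\ep\abs{x}}$ applied to the binomial expansions of $t^p\partial^q(M_\o T_xg)$). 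The trade-off is clear: the paper's computation is elementary and needs no vector-valued integration, at the cost of explicit $\tau$-dependent manipulations and a separate endpoint argument; yours is shorter and uniform in $\tau\in[0,1]$ but relies on standard facts about integrals of Fr\'echet-space-valued functions. Your closing ``weak'' alternative --- inserting the inversion formula into the conjugate-linear slot of $W_\tau(\f,\cdot)$ and moving $\sigma$ inside --- is essentially the paper's own proof stated abstractly, with the same convergence issue resolved by the same decay-versus-growth comparison.
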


Next we show how the $\tau$-quantization $\Opt(\sigma)$, $\tau\in[0,1]$, can be extended between ultra-modulation spaces under suitable assumptions on the weights. We remark that the following theorem is contained in the more general \cite[Theorem 3.1]{ToftquasiBanach2017}.
Nevertheless our more elementary proof is independent and self-contained. We note that
\cite[Theorem  3.3]{BasCorNic2019} is a particular case of  Theorem \ref{Th-ExtensionWeylOp} when restricted to polynomial weights and
the duality between $\sch(\Ren)$  and  $\sch'(\Ren)$.

\begin{theorem}\label{Th-ExtensionWeylOp}
Consider $\tau\in[0,1]$, $m_0\in\mathscr{P}_E(\bR^{4d})$ and $m_1,m_2\in\mathscr{P}_E(\rdd)$ such that
\begin{equation}\label{Eq-ConditionWeights}
	\frac{m_2\phas}{m_1(y,\eta)}\lesssim m_0((1-\tau)x+\tau y,\tau\o+(1-\tau)\eta,\o-\eta,y-x),\quad\forall\,x,\o,y,\eta\in\rd.
\end{equation}
Fix a symbol $\sigma \in M^{\infty,1}_{m_0}(\R^{2d})$. Then the
pseudodifferential operator $\Opt(\sigma)$, from $\gel(\rd)$ to $\ult(\rd)$,  extends uniquely to a bounded and linear operator from
$M^p_{m_1}(\rd)$ into $M^p_{m_2}(\rd)$ for every $1\leq p<\infty$.
\end{theorem}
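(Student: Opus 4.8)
The plan is to establish the norm estimate $\norm{\Opt(\sigma)f}_{M^p_{m_2}} \lesssim \norm{\sigma}_{M^{\infty,1}_{m_0}}\,\norm{f}_{M^p_{m_1}}$ first on the dense subspace $f\in\gel(\rd)$, and then to extend $\Opt(\sigma)$ by density. Throughout I would measure all modulation-space (quasi-)norms with one fixed window $g\in\gel(\rd)$ normalized by $\norm{g}_{L^2}=1$: by Theorem \ref{Th-PoertyUltraMod}\textit{(ii)} the choice of window is immaterial, and by Theorem \ref{potreban}\textit{(i)} the associated $\Phi_\tau=W_\tau(g,g)$ lies in $\gel(\rdd)$, so it is a legitimate window for the $M^{\infty,1}_{m_0}$-norm of $\sigma$.

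The first step is to compute the STFT of the output. Testing the weak inversion formula \eqref{Eq-Proof-ii} of Lemma \ref{lm:optaustft} against $\pi(w)g\in\gel(\rd)$ and inserting the pointwise identity of Lemma \ref{lemma41} yields
\[
\abs{V_g(\Opt(\sigma)f)(w)} \leq \intrdd \abs{V_g f(z)}\,\abs{V_{\Phi_\tau}\sigma\big(\cT_\tau(w,z),J(w-z)\big)}\,dz,\qquad w\in\rdd.
\]
Next I would multiply by $m_2(w)$, insert the factor $m_1(z)/m_1(z)$, and invoke the weight hypothesis \eqref{Eq-ConditionWeights}, which after matching variables reads precisely $m_2(w)/m_1(z)\lesssim m_0(\cT_\tau(w,z),J(w-z))$. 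This gives
\[
m_2(w)\abs{V_g(\Opt(\sigma)f)(w)} \lesssim \intrdd \abs{V_g f(z)}\,m_1(z)\,m_0\big(\cT_\tau(w,z),J(w-z)\big)\,\abs{V_{\Phi_\tau}\sigma\big(\cT_\tau(w,z),J(w-z)\big)}\,dz.
\]

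The decisive step is to read the right-hand side as a convolution. Setting $\Psi(\zeta)\coloneqq\sup_{u\in\rdd}\abs{V_{\Phi_\tau}\sigma(u,\zeta)}\,m_0(u,\zeta)$ one has $\norm{\Psi}_{L^1(\rdd)}=\norm{\sigma}_{M^{\infty,1}_{m_0}}$. Since the frequency argument $J(w-z)$ depends only on $w-z$, bounding the phase-space slot by its supremum gives $m_0(\cT_\tau(w,z),J(w-z))\,\abs{V_{\Phi_\tau}\sigma(\cdots)}\leq\Psi(J(w-z))$; and as $J$ is a linear bijection with $\abs{\det J}=1$, the function $k\coloneqq\Psi\circ J$ again lies in $L^1(\rdd)$ with $\norm{k}_{L^1}=\norm{\sigma}_{M^{\infty,1}_{m_0}}$. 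Hence the integral above is $\big((\abs{V_g f}\,m_1)\ast k\big)(w)$, and Young's inequality (valid for $1\leq p\leq\infty$) yields $\norm{\Opt(\sigma)f}_{M^p_{m_2}}\lesssim\norm{V_g f}_{L^p_{m_1}}\norm{k}_{L^1}=\norm{\sigma}_{M^{\infty,1}_{m_0}}\norm{f}_{M^p_{m_1}}$. Finally, because $p<\infty$, the space $\gel(\rd)$ is dense in $M^p_{m_1}(\rd)$ by Theorem \ref{Th-PoertyUltraMod}\textit{(iii)}, so $\Opt(\sigma)$ extends uniquely to a bounded linear operator $M^p_{m_1}(\rd)\to M^p_{m_2}(\rd)$.

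The only genuinely delicate point is the reduction to a convolution: it works precisely because in the $M^{\infty,1}_{m_0}$-norm the supremum is taken over the phase-space variable (which $\cT_\tau(w,z)$ occupies and which ranges freely), whereas the $L^1$-integration is over the frequency variable, which here equals $J(w-z)$ and thus depends only on the difference $w-z$. Everything else — the measure-preserving property of $J$, the interchange of integration justifying the pointwise STFT bound, and the density argument — is routine. Note that the restriction $p<\infty$ enters only through density, while $p\geq1$ is needed only for Young's inequality.
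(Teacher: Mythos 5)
Your proposal is correct and follows essentially the same route as the paper: both start from the inversion formula of Lemma \ref{lm:optaustft}, use the almost-diagonalization identity \eqref{311} to write $V_g(\Opt(\sigma)f)$ as an integral operator applied to $V_gf$, and invoke \eqref{Eq-ConditionWeights} to majorize the resulting kernel by $\abs{V_{\Phi_\tau}\sigma(\cT_\tau(w,z),J(w-z))}\,m_0(\cT_\tau(w,z),J(w-z))$. The only (cosmetic) difference is the last step: the paper verifies the two Schur-test conditions via the change of variables $w'=J(w-z)$, whereas you dominate the kernel pointwise by the convolution kernel $\Psi\circ J$ and apply Young's inequality — the two arguments are interchangeable here and yield the same bound $\norm{\sigma}_{M^{\infty,1}_{m_0}}\norm{f}_{M^p_{m_1}}$.
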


\begin{proof}
Let $g \in \gel(\rd)$ with $\norm{g}_{L^2}=1$ and consider $f\in\gel(\rd)\subset M^p_{m_1}(\rd)$.
Due to the normalization chosen $\norm{g}_{L^2}=\norm{\hat{g}}_{L^2}$ and we recall the inversion formula \eqref{invformula} which can be seen as a pointwise equality between smooth functions in this case (see \cite[Proposition 11.2.4]{Grochenig_2001_Foundations}): $f=\int_{\rdd} V_gf(z)\pi(z)g\,dz$.

Next we use Lemma \ref{lm:optaustft} and express the STFT of the tempered ultra-distribution $\Opt(\sigma) f$, with $\tau\in[0,1]$, in the following way:
\begin{equation}\label{Eq-Proof-ii-2}
V_g (\Opt(\sigma) f)(w)=\<\Opt(\sigma) f,\pi(w)g\>\overset{\text{\eqref{Eq-Proof-ii}}}{=}\int_{\rdd} V_g f(z) \, \langle \Opt(\sigma) \pi(z) g,
\pi(w)g\rangle \,dz.
\end{equation}

In the next step we prove that the map $ M_\tau(\sigma): G\mapsto M_\tau(\sigma)G $, defined by
\begin{equation*}
M_\tau(\sigma) G(w)\coloneqq\int_{\rdd} G(z) \, \langle  \Opt(\sigma) \pi(z) g,
\pi(w) g \rangle \, dz
\end{equation*}
is continuous from $  L^{p}_{m_1}(\rdd)$ to $L^{p}_{m_2}(\rdd).$

Using \eqref{311}, we see that it is equivalent to prove that the integral operator with kernel
\begin{equation*}
	 K_\tau(z,w)\coloneqq\left|V_{\Phi_\tau} \sigma\left(\cT_\tau(w,z),J(w-z)\right)\right|\frac1{m_1(z)} m_2(w),
\end{equation*}
where $\cT_\tau$ ans $J$ are defined in \eqref{Eq-def-T-J}, is bounded on $L^p(\rdd)$. We do this using the Schur test (see, e.g., \cite[Lemma 6.2.1 (b)]{Grochenig_2001_Foundations}). First we majorize $K_\tau$ with another integral kernel $Q_\tau$ using the condition \eqref{Eq-ConditionWeights} with $w=\phas\in\rdd$ and $z=(y,\eta)\in\rdd$:
\begin{align*}
	K_\tau(z,w)&=\frac{m_2(w) m_0(\cT_\tau(w,z),J(w-z))}{m_1(z)m_0(\cT_\tau(w,z),J(w-z))}\left|V_{\Phi_\tau} \sigma(\cT_\tau(w,z),J(w-z))\right|\\
	&\lesssim \left|V_{\Phi_\tau} \sigma(\cT_\tau(w,z),J(w-z))\right|m_0(\cT_\tau(w,z),J(w-z))\\
	&\eqqcolon Q_\tau(z,w).
\end{align*}

We now show that $Q_\tau$ satisfies the Schur conditions.
In the sequel we adopt the change of variables $w'\equiv w'_{z}(w)\coloneqq J(w-z)$, where $z$ is fixed.
Hence $w=z-J(w')$ and $\cT_\tau(z-J(w'),z)=z-((1-\tau)(Jw')_1,\tau(Jw')_2)$ where $J(w')=((Jw')_1,(Jw')_2)$.
Then:
\begin{multline*}
	\sup_{z\in\rdd}\int_{\rdd}\left|Q_\tau(z,w)\right|\,dw \\
=\sup_{z\in\rdd}\int_{\rdd}\left|V_{\Phi_\tau} \sigma\left(\cT_\tau(z-J(w'),z),w'\right)\right|m_0\left(\cT_\tau(z-J(w'),z),w'\right)\,dw'\\
\leq \int_{\rdd}\sup_{z\in\rdd}\left|V_{\Phi_\tau} \sigma\left(\cT_\tau(z-J(w'),z),w'\right)\right|m_0\left(\cT_\tau(z-J(w'),z),w'\right)\,dw'\\
=\int_{\rdd}\sup_{z\in\rdd}\left|V_\Phi \sigma\left(z,w'\right)\right|m_0\left(z,w'\right)\,dw'
=\norm{\sigma}_{M^{\infty,1}_{m_{0}}}<+\infty.	
\end{multline*}
Now, for every $w$ fixed, define the change of variables $w'\equiv w'_w(z)\coloneqq J(w-z)$, so $z=w+J(w')$ and $\cT_\tau(w,w+J(w'))=w+(\tau(Jw')_1,(1-\tau)(Jw')_2)$. Therefore
\begin{multline*}
\sup_{w\in\rdd}\int_{\rdd}\left|Q_\tau(z,w)\right|\,dz \\
=\sup_{w\in\rdd}\int_{\rdd}\left|V_{\Phi_\tau} \sigma\left(\cT_\tau(w,w+J(w')),w'\right)\right|m_0\left(\cT_\tau(w,w+J(w')),w'\right)\,dw'\\
\leq \int_{\rdd}\sup_{w\in\rdd}\left|V_{\Phi_\tau} \sigma\left(\cT_\tau(w,w+J(w')),w'\right)\right|m_0\left(\cT_\tau(w,w+J(w')),w'\right)\,dw'\\
=\int_{\rdd}\sup_{w\in\rdd}\left|V_{\Phi_\tau} \sigma\left(w,w'\right)\right|m_0\left(w,w'\right)\,dw'
=\norm{\sigma}_{M^{\infty,1}_{m_0}}<+\infty.	
\end{multline*}

Since $K_\tau\lesssim Q_\tau$, it follows that
\begin{equation*}	 \sup_{z\in\rdd}\int_{\rdd}\left|K_\tau(z,w)\right|\,dw<+\infty\qquad\text{and}\qquad\sup_{w\in\rdd}\int_{\rdd}\left|K_\tau(z,w)\right|\,dz<+\infty.
\end{equation*}

Hence from the Schur test it follows that $M_\tau(\sigma)$ is continuous, and due to \eqref{Eq-Proof-ii-2} we notice that
$$
V_g\circ \Opt(\sigma) f=M_\tau(\sigma)\circ V_g f,
$$
where the right hand-side is continuous and takes elements of $\gel(\rd)\subset M^p_{m_1}(\rd)$ into $L^p_{m_2}(\rdd)$. Therefore $\Opt(\sigma)$ is linear, continuous and densely defined. This concludes the proof.
\end{proof}


Schatten class properties for various classes of pseudodifferential operators in the framework of time-frequency analysis are studied by many authors, let us mention just \cite{Grochenig_2001_Foundations, ECSchattenloc2005, MOPf, ToftquasiBanach2017}.
However, for our purposes it is convenient to   recall \cite[Theorem 1.2]{KobMiy2012} about Schatten class property for pseudodifferential operators $\Opt(\sigma)$ with symbols in modulation spaces.

\begin{theorem}\label{Th-Schatten-tau-pseudo}
	Let $\tau\in[0,1]$, $0<p<2$, $d\in\bN$ and
	 \begin{equation}\label{Eq-condition-on-u}
		u>\frac{2d}{p}-d.
	\end{equation}
	Consider $\sigma\in M^2_{m^\tau_u}(\rdd)$, where $m^\tau_u$ is defined as in \eqref{Eq-definition-m-u-tau}. Then
	\begin{equation*}
		\Opt(\sigma)\in S_p(L^2(\rd)).
	\end{equation*}
\end{theorem}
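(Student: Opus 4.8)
The plan is to pass to a discrete (Gabor-matrix) model of $T:=\Opt(\sigma)$ and to exploit the fact that the weight $m^\tau_u$ acts on only one of the two lattice variables. First I would fix a nonzero window $g\in\gel(\rd)$ generating a Gabor frame $\{\pi(\lambda)g\}_{\lambda\in\Lambda}$ of $\lrd$, with $\Lambda\subset\rdd$ a lattice and canonical dual window $\tilde g\in\gel(\rd)$. Writing $Cf=(\langle f,\pi(\lambda)g\rangle)_\lambda$ for the analysis operator and $D_{\tilde g}c=\sum_\mu c_\mu\pi(\mu)\tilde g$ for the synthesis operator, one has $D_{\tilde g}C=\mathrm{Id}$ on $\lrd$, hence $T=D_{\tilde g}\,\widetilde M\,C$ with $\widetilde M:=C\,T\,D_{\tilde g}$ the Gabor matrix, $\widetilde M_{\lambda\mu}=\langle T\pi(\mu)\tilde g,\pi(\lambda)g\rangle$. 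Since $C$ and $D_{\tilde g}$ are bounded and $S_p(\lrd)$ is an ideal, it suffices to prove $\widetilde M\in S_p(\ell^2(\Lambda))$.

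Next I would identify the entries and the effective weight. The two-window version of Lemma \ref{lemma41} (the same proof, with $\Phi_\tau$ replaced by the cross-$\tau$-Wigner window $\Psi_\tau:=W_\tau(g,\tilde g)\in\gel(\rdd)\setminus\{0\}$) gives $|\widetilde M_{\lambda\mu}|=|V_{\Psi_\tau}\sigma(\cT_\tau(\lambda,\mu),J(\lambda-\mu))|$, with $\cT_\tau,J$ as in \eqref{Eq-def-T-J}. A short direct computation using \eqref{Eq-definition-m-u-tau} shows that, evaluated at these arguments, the weight collapses to a row-weight, $m^\tau_u(\cT_\tau(\lambda,\mu),J(\lambda-\mu))=\langle\lambda\rangle^u$, independently of $\mu$ and of $\tau$. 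Because $(\lambda,\mu)\mapsto(\cT_\tau(\lambda,\mu),J(\lambda-\mu))$ is a measure-preserving linear bijection of $\rdd\times\rdd$, it carries $\Lambda\times\Lambda$ onto a lattice $\Lambda_0\subset\mathbb{R}^{4d}$, so the Gabor-frame norm equivalence of Theorem \ref{framesmod}, applied to $M^2_{m^\tau_u}(\rdd)$ with window $\Psi_\tau$ and discretization lattice $\Lambda_0$, yields
\[
\sum_{\lambda,\mu\in\Lambda}\langle\lambda\rangle^{2u}\,|\widetilde M_{\lambda\mu}|^2\asymp\|\sigma\|_{M^2_{m^\tau_u}}^2<+\infty .
\]
Setting $D:=\mathrm{diag}(\langle\lambda\rangle^u)_{\lambda\in\Lambda}$, this says precisely that $D\widetilde M\in S_2(\ell^2(\Lambda))$, with $\|D\widetilde M\|_{S_2}\asymp\|\sigma\|_{M^2_{m^\tau_u}}$.

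The final step is operator-theoretic. I would factor $\widetilde M=D^{-1}(D\widetilde M)$ and apply the Schatten--Hölder inequality with $1/p=1/r+1/2$, that is $r=2p/(2-p)\in(0,\infty)$, to obtain
\[
\|\widetilde M\|_{S_p}\le\|D^{-1}\|_{S_r}\,\|D\widetilde M\|_{S_2}.
\]
Here $D^{-1}=\mathrm{diag}(\langle\lambda\rangle^{-u})$ is a diagonal, hence compact, operator on $\ell^2(\Lambda)$, and $\|D^{-1}\|_{S_r}^r=\sum_{\lambda\in\Lambda}\langle\lambda\rangle^{-ur}$, which converges precisely when $ur>2d$, i.e. when $u>2d/p-d$; this is exactly assumption \eqref{Eq-condition-on-u}. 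Therefore $\widetilde M\in S_p(\ell^2(\Lambda))$ and consequently $T\in S_p(\lrd)$, proving Theorem \ref{Th-Schatten-tau-pseudo}.

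I expect the main obstacle to be conceptual rather than computational: since $m^\tau_u$ controls decay only in the variable $\lambda$, the naive route of estimating the entrywise norm $\|\widetilde M\|_{\ell^p}$ (via $\|\widetilde M\|_{S_p}\le\|\widetilde M\|_{\ell^p}$ and a Hölder splitting against $m^\tau_u$) fails, because the resulting series diverges in the $\mu$-variable. The key is to retain the genuine singular-value structure and use that, on the discrete index set $\Lambda$, the one-sided weight produces a compact diagonal multiplier $D^{-1}\in S_r$; the summability threshold $ur>2d$, matching \eqref{Eq-condition-on-u}, is where the hypothesis is used sharply. The one technical point to verify carefully is that a single lattice may be chosen so that $\{\pi(\lambda)g\}$ is a Gabor frame of $\lrd$ while $\Lambda_0$ is an admissible discretization lattice for $M^2_{m^\tau_u}(\rdd)$, and that the dual window $\tilde g$ (equivalently, a canonical tight window, which would let one invoke Lemma \ref{lemma41} verbatim) stays in $\gel(\rd)$.
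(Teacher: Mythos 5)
The paper offers no proof of Theorem \ref{Th-Schatten-tau-pseudo}: it is quoted from Kobayashi--Miyachi \cite[Theorem 1.2]{KobMiy2012}, so there is no internal argument to compare yours against. Your reconstruction is correct in substance and recovers the idea behind the original result. The key computation checks out: with $(x,\o)=\cT_\tau(\lambda,\mu)$ and $(y,\eta)=J(\lambda-\mu)$ one gets $x-\tau\eta=\lambda_1$ and $\o+(1-\tau)y=\lambda_2$, so $m^\tau_u$ collapses to $(1+\abs{\lambda_1}+\abs{\lambda_2})^u\asymp\langle\lambda\rangle^u$, a pure row weight; the factorization $\widetilde M=D^{-1}(D\widetilde M)$ with $D\widetilde M\in S_2$ and $D^{-1}\in S_r$, $1/p=1/r+1/2$, then yields exactly the threshold $ur>2d\Leftrightarrow u>2d/p-d$, i.e.\ \eqref{Eq-condition-on-u}. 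The Schatten--H\"older inequality is valid for all positive exponents (via Horn's multiplicative majorization of singular values), so the quasi-Banach range $p<1$ is not an obstruction, and the ideal property $\|D_{\tilde g}\widetilde M C\|_{S_p}\leq\|D_{\tilde g}\|\,\|\widetilde M\|_{S_p}\|C\|$ also survives for $p<1$.

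Three points should be tightened, none fatal. First, you only need the upper (Bessel) half of \eqref{Eq-EquivalenceContDiscNormUltraMod}: for a window in $\gel(\rdd)$ the sampling estimate $\sum_{\nu\in\Lambda_0}\abs{V_{\Psi_\tau}\sigma(\nu)}^2 m^\tau_u(\nu)^2\lesssim\|\sigma\|^2_{M^2_{m^\tau_u}}$ holds for an \emph{arbitrary} lattice $\Lambda_0$, so there is no need to arrange that $\G(\Psi_\tau,\Lambda_0)$ be a frame; the two-sided $\asymp$ you wrote is both unjustified as stated and unnecessary. Second, concerning the dual window: since $m^\tau_u$ is merely of polynomial type, it suffices that $\tilde g\in\cS(\rd)$ for $W_\tau(g,\tilde g)$ to be an admissible window, and this is guaranteed by spectral-invariance results for Gabor frame operators (Gr\"ochenig--Leinert); alternatively, start from a tight Gabor frame with Gelfand--Shilov window and invoke Lemma \ref{lemma41} with a single window. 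Third, the two-window variant of Lemma \ref{lemma41} requires $W_\tau(g,\tilde g)\neq0$, which follows from Moyal's identity since $g,\tilde g\neq0$.
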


\begin{lemma}\label{Lem-inclusion-for-compactness}
	Let $\tau\in[0,1]$, $\ga\geq1$ and $d\in\bN$. Fix
	\begin{equation*}
		u,s,t>0,\qquad l>u+d,\qquad j\geq u.
	\end{equation*}
	Then
	\begin{equation*}
		M^{\infty,1}_{w^\ga_s\otimes w^\ga_t}(\rdd)\hookrightarrow M^{\infty,1}_{v_l\otimes v_j}(\rdd)\hookrightarrow M^2_{v_u\otimes v_u}(\rdd)\hookrightarrow M^2_{m^\tau_u}(\rdd).
	\end{equation*}
	\begin{proof}
		The first inclusion is due to the inclusion relations between ultra-modulation spaces since $v_l\otimes v_j\lesssim w^\ga_s\otimes w^\ga_t$. The last inclusion follows similarly since $m^\tau_u\lesssim v_u\otimes v_u$, as it is shown in Remark \ref{Rem-m-tau_u-lesssim-v_u}.

		For the second inclusion we use Theorem \ref{Th-GuoChenFanZhao}: $(\i,2,l,u)$ fulfils the condition $(\cC_2)$ and $(1,2,j,u)$ fulfils the condition $(\cC_1)$. This concludes the proof.
	\end{proof}
\end{lemma}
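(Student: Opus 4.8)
The plan is to treat the three embeddings separately, observing that only the middle one actually moves the pair of Lebesgue exponents; the two outer embeddings merely replace one weight by a smaller one while keeping $(p,q)$ fixed, and should therefore reduce to the weight-monotonicity statement in Theorem \ref{Th-PoertyUltraMod}(iii) (if $m_2\lesssim m_1$ then $M^{p,q}_{m_1}\hookrightarrow M^{p,q}_{m_2}$) once the relevant pointwise weight comparisons are established.

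For the first embedding $M^{\infty,1}_{w^\ga_s\otimes w^\ga_t}(\rdd)\hookrightarrow M^{\infty,1}_{v_l\otimes v_j}(\rdd)$ I would verify the pointwise bound $v_l\otimes v_j\lesssim w^\ga_s\otimes w^\ga_t$. Since $\ga\geq1$ and $s,t>0$, in each variable the subexponential weight $e^{s\abs{\cdot}^{1/\ga}}$ dominates every polynomial $\langle\cdot\rangle^{l}$, so $v_l\lesssim w^\ga_s$ and $v_j\lesssim w^\ga_t$; taking tensor products gives the comparison, and Theorem \ref{Th-PoertyUltraMod}(iii) with $(p,q)=(\infty,1)$ fixed closes this step. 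For the third embedding $M^2_{v_u\otimes v_u}(\rdd)\hookrightarrow M^2_{m^\tau_u}(\rdd)$ the required comparison $m^\tau_u\lesssim v_u\otimes v_u$ is exactly Remark \ref{Rem-m-tau_u-lesssim-v_u}, so once more Theorem \ref{Th-PoertyUltraMod}(iii), now with $(p,q)=(2,2)$, yields the embedding.

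The substance of the lemma lies in the middle embedding $M^{\infty,1}_{v_l\otimes v_j}(\rdd)\hookrightarrow M^2_{v_u\otimes v_u}(\rdd)$, where both exponents change, from $(\infty,1)$ to $(2,2)$. Here I would invoke the sharp embedding result Theorem \ref{Th-GuoChenFanZhao}, applied over $\rdd$, so that the ambient dimension entering its conditions is $2d$ rather than $d$. Matching parameters, the source has $(p_1,q_1)=(\infty,1)$ with weight exponents $(t_1,s_1)=(l,j)$, and the target has $(p_2,q_2)=(2,2)$ with $(t_2,s_2)=(u,u)$. For the first tensor factor the alternative $(\cC_1)$ is unavailable, since $1/p_2=1/2>0=1/p_1$, so I would check $(\cC_2)$: the inequality $\tfrac12+\tfrac{u}{2d}<\tfrac{l}{2d}$ is equivalent to $l>u+d$, precisely the hypothesis of the lemma. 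For the second tensor factor one has $1/q_2=1/2\leq1=1/q_1$ and $u\leq j$, so $(\cC_1)$ holds thanks to $j\geq u$. With both conditions verified, Theorem \ref{Th-GuoChenFanZhao} delivers the middle embedding, and chaining the three gives the claim.

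I expect the main obstacle to be the bookkeeping in this middle step rather than any deep argument: one must keep track that the relevant dimension is $2d$ (so that the threshold in $(\cC_2)$ reads $l-u>d$ and not $l-u>d/2$), and one must select the correct alternative for each factor, namely $(\cC_2)$ for the first and $(\cC_1)$ for the second, since the two constraints are of genuinely different type. The two outer embeddings are routine weight comparisons, made possible precisely because $\ga\geq1$ ensures that the subexponential weights dominate all polynomials.
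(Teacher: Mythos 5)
Your proposal is correct and follows essentially the same route as the paper: the two outer embeddings via the pointwise weight comparisons $v_l\otimes v_j\lesssim w^\ga_s\otimes w^\ga_t$ and $m^\tau_u\lesssim v_u\otimes v_u$ (the latter being Remark \ref{Rem-m-tau_u-lesssim-v_u}), and the middle one via Theorem \ref{Th-GuoChenFanZhao} with $(\infty,2,l,u)$ satisfying $(\cC_2)$ and $(1,2,j,u)$ satisfying $(\cC_1)$. Your explicit bookkeeping that the ambient dimension is $2d$, so that $(\cC_2)$ reduces exactly to $l>u+d$, is a detail the paper leaves implicit but is handled correctly here.
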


On account of the following corollary all the operators considered in Proposition \ref{Pro-SmoothnessEigenfuctionsWeyl} are compact on $L^2(\rd)$.

\begin{corollary}\label{Cor-compactness}
	Let $\tau\in[0,1]$, $\ga\geq1$ and $s,t>0$. Consider $\sigma\in M^{\infty,1}_{w^\ga_s\otimes w^\ga_t}(\rdd)$. Then $\Opt(\sigma)$ is compact on $L^2(\rd)$.
	\begin{proof}
		The claim follows by Lemma \ref{Lem-inclusion-for-compactness} with $u$ satisfying \eqref{Eq-condition-on-u}, after choosing any $0<p<2$, in addition with Theorem \ref{Th-Schatten-tau-pseudo}.
	\end{proof}
\end{corollary}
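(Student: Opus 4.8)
The plan is to realize $\Opt(\sigma)$ as an element of a Schatten class $S_p(L^2(\rd))$ with $p<\infty$ and then read off compactness, since by the very definition adopted in the preliminaries $S_p(L^2(\rd))$ with $0<p<\infty$ consists \emph{precisely} of compact operators whose singular values lie in $\ell^p$. The substantive input is Theorem \ref{Th-Schatten-tau-pseudo}, which places $\Opt(\sigma)$ in $S_p(L^2(\rd))$ as soon as $\sigma$ belongs to the weighted modulation space $M^2_{m^\tau_u}(\rdd)$ for a parameter $u$ linked to $p$ through \eqref{Eq-condition-on-u}; the remaining task is merely to feed the hypothesis $\sigma\in M^{\infty,1}_{w^\ga_s\otimes w^\ga_t}(\rdd)$ into that theorem via an embedding.

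First I would fix an arbitrary $0<p<2$ and choose $u>\tfrac{2d}{p}-d$, which for $0<p<2$ forces $u>0$, so that the hypothesis of Theorem \ref{Th-Schatten-tau-pseudo} on $u$ is met. Keeping $s,t>0$ and $\ga\geq1$ as given, I would then pick auxiliary polynomial indices $l>u+d$ and $j\geq u$ and apply Lemma \ref{Lem-inclusion-for-compactness}, which yields the embedding chain
$$
M^{\infty,1}_{w^\ga_s\otimes w^\ga_t}(\rdd)\hookrightarrow M^{\infty,1}_{v_l\otimes v_j}(\rdd)\hookrightarrow M^2_{v_u\otimes v_u}(\rdd)\hookrightarrow M^2_{m^\tau_u}(\rdd).
$$
In particular the assumption $\sigma\in M^{\infty,1}_{w^\ga_s\otimes w^\ga_t}(\rdd)$ then forces $\sigma\in M^2_{m^\tau_u}(\rdd)$.

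With $\sigma\in M^2_{m^\tau_u}(\rdd)$ secured, I would invoke Theorem \ref{Th-Schatten-tau-pseudo} to conclude $\Opt(\sigma)\in S_p(L^2(\rd))$. Since membership in $S_p(L^2(\rd))$ with $p<\infty$ entails, by definition, that the operator is compact (its singular values lying in $\ell^p$ in particular form a null sequence), this immediately gives that $\Opt(\sigma)$ is compact on $L^2(\rd)$, which is the desired claim.

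I do not expect a genuine obstacle: the argument is a composition of the embedding of Lemma \ref{Lem-inclusion-for-compactness} with the Schatten-class statement of Theorem \ref{Th-Schatten-tau-pseudo}. The only point deserving a word of care is the admissibility of the parameter choices, namely that the first embedding rests on $v_l\otimes v_j\lesssim w^\ga_s\otimes w^\ga_t$. This is exactly the elementary fact that for $\ga\geq1$ and any $s,t>0$ the (sub-)exponential weights $w^\ga_s,w^\ga_t$ dominate every polynomial weight $v_l,v_j$, which is precisely why $l$ and $j$ may be taken as large as the Schatten threshold requires while $s,t$ remain fixed.
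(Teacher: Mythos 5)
Your argument is correct and coincides with the paper's own proof: both fix an arbitrary $0<p<2$, take $u$ satisfying \eqref{Eq-condition-on-u}, pass through the embedding chain of Lemma \ref{Lem-inclusion-for-compactness} to land in $M^2_{m^\tau_u}(\rdd)$, and then apply Theorem \ref{Th-Schatten-tau-pseudo} to obtain membership in $S_p(L^2(\rd))$ and hence compactness. The only difference is that you spell out the parameter bookkeeping ($u>0$, $l>u+d$, $j\geq u$, and $v_l\otimes v_j\lesssim w^\ga_s\otimes w^\ga_t$) which the paper leaves implicit.
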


Now we prove the decay property of the eigenfunctions of $\Opt(\sigma)$ when the symbol belongs to certain weighted modulation spaces.
This result improves \cite[Proposition 3.6]{BasCorNic2019}, in the sense that we show how faster decay of the symbol implies
stronger regularity and decay properties for the eigenfunctions of the corresponding operator. More precisely,
\cite[Proposition 3.6]{BasCorNic2019} deals with polynomial decay, whereas Theorem \ref{Pro-SmoothnessEigenfuctionsWeyl} allows to consider sub-exponential decay as well.

\begin{theorem}\label{Pro-SmoothnessEigenfuctionsWeyl}
	Fix $\tau\in[0,1]$, $\ga\geq1$ and $s>0$. Consider a symbol \\$\sigma\in M^{\infty,1}_{w^\ga_s\otimes w^\ga_t}(\rdd)$ for every $t$ such that
	\begin{equation*}
		t\geq
		\begin{cases}
		s\tau^{1/\ga}\qquad\qquad&\text{if}\qquad1/2\leq\tau\leq1,\\
		s(1+\tau^2)^{1/2\ga}\quad&\text{if}\qquad0\leq\tau<1/2.
		\end{cases}
	\end{equation*}
	If $\lambda\in (\sigma_P(\Opt(\sigma))\smallsetminus\{0\})\neq\emptyset$, then any $f\in L^2(\rd)$ eigenfunction associated to the eigenvalue $\lambda$ belongs to $\cS^{(\ga)}(\rd)$.
\end{theorem}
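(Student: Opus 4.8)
The plan is to run a bootstrap on the weight of the modulation space to which the eigenfunction $f$ belongs, gaining a fixed amount $s$ of sub\-exponential decay at each iteration, and then to identify the intersection of all these spaces with $\cS^{(\ga)}(\rd)$. The mechanism is elementary once the continuity of $\Opt(\sigma)$ is in place: since $\lambda\neq0$, the eigenvalue equation $\Opt(\sigma)f=\lambda f$ rewrites as $f=\lambda^{-1}\Opt(\sigma)f$, so whenever $f$ is known to sit in a modulation space on which $\Opt(\sigma)$ acts continuously into a space with stronger decay, that decay is automatically inherited by $f$ itself.

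The source of continuity is Theorem \ref{Th-ExtensionWeylOp} with $p=2$. Taking $m_1=w^\ga_r$, $m_2=w^\ga_{r+s}$ on $\rdd$ and $m_0=w^\ga_s\otimes w^\ga_t$ on $\bR^{4d}$, that theorem yields a bounded extension $\Opt(\sigma)\colon M^2_{w^\ga_r}(\rd)\to M^2_{w^\ga_{r+s}}(\rd)$ provided the weight condition \eqref{Eq-ConditionWeights} holds. This is exactly what Lemma \ref{Lem-condition(sub-)exp-verified} supplies, namely
\[
\frac{w^\ga_{r+s}\phas}{w^\ga_r(y,\eta)}\lesssim w^\ga_s\otimes w^\ga_t\big(\big((1-\tau)x+\tau y,\tau\o+(1-\tau)\eta\big),(\o-\eta,y-x)\big)
\]
as soon as $t\geq r+s\tau^{1/\ga}$ (for $1/2\leq\tau\leq1$), respectively $t\geq r+s(1+\tau^2)^{1/2\ga}$ (for $0\leq\tau<1/2$).

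Next I would iterate with $r=ns$, $n\in\bN_0$. The $n$-th step needs $\sigma\in M^{\infty,1}_{w^\ga_s\otimes w^\ga_{t_n}}(\rdd)$ for some $t_n\geq ns+s\tau^{1/\ga}$ (or the analogous bound when $\tau<1/2$), and these thresholds grow linearly in $n$; crucially, the hypothesis grants $\sigma$ in the space for \emph{every} $t$ above the base threshold $s\tau^{1/\ga}$, so each step is covered. Starting from $f\in L^2(\rd)=M^2_{w^\ga_0}(\rd)$ and applying the mapping property of Theorem \ref{Th-ExtensionWeylOp} together with $f=\lambda^{-1}\Opt(\sigma)f$, an induction gives $f\in M^2_{w^\ga_{ns}}(\rd)$ for all $n\in\bN_0$. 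One should note here that the successive extensions of $\Opt(\sigma)$ are consistent, since they all agree with the $L^2$-operator on the dense subspace $\gel(\rd)$ and are continuous, so the eigenvalue identity indeed propagates along the whole chain of spaces.

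Finally I would pass to the limit. Because $w^\ga_{k'}\lesssim w^\ga_k$ whenever $k'\leq k$, Theorem \ref{Th-PoertyUltraMod}(iii) gives the nesting $M^2_{w^\ga_k}(\rd)\hookrightarrow M^2_{w^\ga_{k'}}(\rd)$; hence membership in $M^2_{w^\ga_{ns}}(\rd)$ for the unbounded family $\{ns\}$ forces $f\in M^2_{w^\ga_k}(\rd)$ for every $k\geq0$. The Proposition identifying $\cS^{(\ga)}(\rd)=\bigcap_{k\geq0}M^{p,q}_{w^\ga_k}(\rd)$, read with $p=q=2$, then delivers $f\in\cS^{(\ga)}(\rd)$. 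The main obstacle is purely one of weight bookkeeping: one must check that the threshold on $t$ dictated by Lemma \ref{Lem-condition(sub-)exp-verified} grows only linearly in the current index $r=ns$ and is matched at every stage by the ``for every $t$'' hypothesis, so that the decay gain $s$ per step never stalls and the intersection over all $k$ is reached.
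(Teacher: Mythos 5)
Your proposal is correct and follows essentially the same route as the paper's own proof: the same bootstrap $f=\lambda^{-1}\Opt(\sigma)f$ through the chain $M^2_{w^\ga_{ns}}(\rd)$ via Theorem \ref{Th-ExtensionWeylOp} with $p=2$ and the weight condition supplied by Lemma \ref{Lem-condition(sub-)exp-verified}, ending with the identification $\bigcap_{k\geq0}M^2_{w^\ga_k}(\rd)=\cS^{(\ga)}(\rd)$. Your explicit remarks on the consistency of the successive extensions and on the linear growth of the thresholds $t_n$ being absorbed by the ``for every $t$'' hypothesis are sound and merely make explicit what the paper leaves implicit.
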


\begin{proof}
	We recall \eqref{Eq-condition(sub-)exp-verified} from Lemma \ref{Lem-condition(sub-)exp-verified}: for every $x,\o,y,\eta\in\rd$ we have:
	\begin{equation*}
	\frac{w^\ga_{r'+s'}\phas}{w^{\ga}_{r'}(y,\eta)}\leq w^\ga_{s'}\otimes w^\ga_{t'}\Big(\big((1-\tau)x+\tau y,\tau\o+(1-\tau)\eta\big),\big(\o-\eta,y-x\big)\Big).
	\end{equation*}
	where $s',r'\geq0$ and $t'$ which fulfils \eqref{Eq-condition-on-t}. We consider first the case $1/2\leq\tau\leq1$ and fix $s'=s>0$.

Take $r'=0$, $t\geq s\tau^{1/\ga}$, and apply Theorem \ref{Th-ExtensionWeylOp} with $p=2$, $m_0=w^\ga_s\otimes w^\ga_t$, $m_1=w^\ga_0$ and $m_2=w^\ga_s$ which satisfy \eqref{Eq-ConditionWeights}. Thus $\Opt(\sigma)$ extends to a continuous operator from $M^2_{w^\ga_0}(\rd)=L^2(\rd)$ to $M^2_{w^\ga_s}(\rd)$. Starting with $f\in L^2(\rd)$ we get $f=\lambda^{\inv}\Opt(\sigma) f\in M^2_{w^\ga_s}(\rd)$.

Now, take $r'=s$, $t\geq s+s\tau^{1/\ga}$, and apply Theorem \ref{Th-ExtensionWeylOp} with $p=2$, $m_0=w^\ga_s\otimes w^\ga_t$, $m_1=w^\ga_s$ and $m_2=w^\ga_{2s}$ which satisfy \eqref{Eq-ConditionWeights}. Thus $\Opt(\sigma)$ extends to a continuous operator from $M^2_{w^\ga_s}(\rd)$ to $M^2_{w^\ga_{2s}}(\rd)$, so starting with $f\in M^2_{w^\ga_s}(\rd)$ we get $f=\lambda^{\inv}\Opt(\sigma) f\in M^2_{w^\ga_{2s}}(\rd)$.

Repeating the same argument, and using the inclusion relations between ultra-modulation spaces we obtain:
\begin{equation*}
		f\in\bigcap_{n\in\bN_0}M^2_{w^\ga_{ns}}(\rd)=\bigcap_{k\geq 0}M^2_{w^\ga_{k}}(\rd)=\cS^{(\ga)}(\rd).
	\end{equation*}
	The case $0\leq\tau<1/2$ is done similarly. This concludes the proof.
\end{proof}

We finish the paper with an observation related to localization operators.

Note that by Corollary \ref{Cor-compactness} it follows that the localization operators $\gaw$ in the following statement are compact on $L^2(\rd)$.

\begin{theorem} \label{thm:eigenlocop}
Consider $\ga\geq1$, $s>0$, $a\in M^\infty_{w^\ga_s\otimes1}(\rdd)$ and $\f_1,\f_2\in\gel(\rd)$. If $\lambda\in (\sigma_P(\gaw)\smallsetminus\{0\})\neq\emptyset$, then any $f\in L^2(\rd)$ eigenfunction associated to the eigenvalue $\lambda$ belongs to $\cS^{(\ga)}(\rd)$.
\end{theorem}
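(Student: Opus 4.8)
The plan is to realize $\gaw$ as a Weyl pseudodifferential operator and then quote Theorem~\ref{Pro-SmoothnessEigenfuctionsWeyl}. First I would use the identity~\eqref{eq:locopweyl}, which gives $\gaw=\mathrm{Op}_{1/2}(\sigma)$ with Weyl symbol $\sigma=a\ast W(\f_2,\f_1)$. Since the eigenfunctions of $\gaw$ coincide with those of $\mathrm{Op}_{1/2}(\sigma)$, it then suffices to show that $\sigma\in M^{\infty,1}_{w^\ga_s\otimes w^\ga_t}(\rdd)$ for every $t\geq s(1/2)^{1/\ga}$, which is exactly the hypothesis needed to invoke Theorem~\ref{Pro-SmoothnessEigenfuctionsWeyl} with $\tau=1/2$.

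Next I would locate the window factor $W(\f_2,\f_1)$. Because $\f_1,\f_2\in\gel(\rd)=\cS^{(1)}(\rd)$, Theorem~\ref{potreban} yields $W(\f_2,\f_1)\in\cS^{(1)}(\rdd)$; the embedding $\cS^{(1)}\hookrightarrow\cS^{(\ga)}$ for $\ga\geq1$ (see~\eqref{Eq:GSEmbeddings}) together with the characterization $\cS^{(\ga)}(\rdd)=\bigcap_{k\geq0}M^{1}_{w^\ga_k}(\rdd)$ places $W(\f_2,\f_1)$ in every $M^1_{w^\ga_k}(\rdd)$. Since $w^\ga_s\otimes w^\ga_t\lesssim w^\ga_k$ on $\mathbb{R}^{4d}$ for $k$ large enough, Theorem~\ref{Th-PoertyUltraMod}\,(iii) then gives $W(\f_2,\f_1)\in M^1_{w^\ga_s\otimes w^\ga_t}(\rdd)$ for all $s,t>0$.

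Then I would feed these two memberships into the convolution relation of Proposition~\ref{Pro-ConvRel}, applied with the underlying dimension $2d$ in place of $d$. Taking $m=v=w^\ga_s\otimes w^\ga_t$ (submultiplicative, hence self-moderate) and $\nu\equiv1$, one has $(w^\ga_s\otimes w^\ga_t)|_1=w^\ga_s$ and $(w^\ga_s\otimes w^\ga_t)|_2=w^\ga_t$, so the first convolution factor is $M^{\infty}_{w^\ga_s\otimes1}(\rdd)$, which contains $a$, the second factor is $M^1_{w^\ga_s\otimes w^\ga_t}(\rdd)$, which contains $W(\f_2,\f_1)$, and the target is $M^{\infty,1}_{w^\ga_s\otimes w^\ga_t}(\rdd)$. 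Concretely, in Proposition~\ref{Pro-ConvRel} one sets the Young exponents $p=\infty$, $q=1$, $r=\infty$ and the Hölder exponents $u=\infty$, $\gamma=1$, with the proposition's remaining exponent (there denoted $t$) equal to $1$; then $1/p+1/q=1=1+1/r$ and $1/u+1/t=1=1/\gamma$ both hold, so the proposition yields $\sigma=a\ast W(\f_2,\f_1)\in M^{\infty,1}_{w^\ga_s\otimes w^\ga_t}(\rdd)$ for every $t>0$.

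Finally, applying Theorem~\ref{Pro-SmoothnessEigenfuctionsWeyl} with $\tau=1/2$, whose admissibility condition $t\geq s(1/2)^{1/\ga}$ is met by the previous step, shows that any $L^2$-eigenfunction of $\gaw=\mathrm{Op}_{1/2}(\sigma)$ associated with a nonzero eigenvalue belongs to $\cS^{(\ga)}(\rd)$. The main obstacle I expect is the dimensional bookkeeping in Proposition~\ref{Pro-ConvRel}: one must substitute $2d$ for $d$ throughout, correctly identify the restricted weights $(w^\ga_s\otimes w^\ga_t)|_1$ and $(w^\ga_s\otimes w^\ga_t)|_2$, and verify that the chosen indices satisfy the Young and Hölder constraints simultaneously. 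Once the weights and indices are matched, the result is a direct assembly of the quoted statements.
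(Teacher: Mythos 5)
Your proposal is correct and follows essentially the same route as the paper's proof: write $\gaw$ in Weyl form, place $W(\f_2,\f_1)\in M^1_{w^\ga_r\otimes w^\ga_t}(\rdd)$, apply Proposition~\ref{Pro-ConvRel} (with exactly the exponents you specify) to conclude $\sigma=a\ast W(\f_2,\f_1)\in M^{\infty,1}_{w^\ga_s\otimes w^\ga_t}(\rdd)$ for the admissible range of $t$, and invoke Theorem~\ref{Pro-SmoothnessEigenfuctionsWeyl} with $\tau=1/2$. The only cosmetic difference is that the paper takes $v=w^\ga_r\otimes w^\ga_t$ with $r\geq s$ and verifies moderateness explicitly, whereas you take $v=m$ and appeal to submultiplicativity; both are valid.
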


\begin{proof}
Since $\f_1,\f_2\in\gel(\rd)$ it follows that $W(\f_2,\f_1)\in\gel(\rdd)\subset M^1_{w^\ga_r\otimes w^\ga_t}(\rdd)$, for every $r,t\geq0$.
We now check that $w^\ga_s\otimes w^\ga_t$ is $w^\ga_r\otimes w^\ga_t$-moderate for every $t\geq0$ and every $r\geq s$:
	\begin{align*}
		w^\ga_s\otimes w^\ga_t(\phas+(y,\eta))&\overset{\text{\eqref{Eq-useful-ineq-1}}}{\leq}w^\ga_s(x)w^\ga_s(y)w^\ga_t(\o)w^\ga_t(\eta)\\
		&\leq w^\ga_r(x)w^\ga_t(\o)w^\ga_s(y)w^\ga_t(\eta)\\
		&=w^\ga_r\otimes w^\ga_t\phas w^\ga_s\otimes w^\ga_t(y,\eta), \;\;\; x,\o,y,\eta\in\rd.
	\end{align*}

We write $\gaw=\Opw(\sigma)$, with $\sigma=a\ast W(\f_2,\f_1)$, and then apply Proposition \ref{Pro-ConvRel} in order to infer $\sigma\in M^{\infty,1}_{w^\ga_s\otimes w^\ga_t}(\rdd)$ for every $t\geq s/2^{1/\ga}$:
	\begin{equation*}
		M^{\infty}_{w^\ga_s\otimes1}(\rdd)\ast M^1_{w^\ga_r\otimes w^\ga_t}(\rdd)\hookrightarrow M^{\infty,1}_{w^\ga_s\otimes w^\ga_t}(\rdd).
	\end{equation*}
The claim  now follows by Theorem \ref{Pro-SmoothnessEigenfuctionsWeyl}.
\end{proof}

\section{Appendix}

\textbf{Proof of Lemma \ref{Lem-condition(sub-)exp-verified}} We first recall that given $0<p\leq q<\infty$ the following holds true:
\begin{equation}\label{Eq-norm_q<norm_p}
	\norm{z}_q=\left(\sum_{i=1}^d\abs{z_i}^q\right)^{\frac1q}\leq\left(\sum_{i=1}^d\abs{z_i}^p\right)^{\frac1p}=\norm{z}_p,
\;\;\;
z=(z_1,\dots,z_d)\in\rd.
\end{equation}
In fact, consider $z$ such that $\norm{z}_p=1$. Hence $\abs{z_i}^p\leq1\,\Rightarrow\,\abs{z_i}\leq1$ for $i=1,\dots,d$. Thus $\abs{z_i}^q\leq\abs{z_i}^p$ and $\sum_{i=1}^d\abs{z_i}^q\leq\sum_{i=1}^d\abs{z_i}^p=1$. Eventually consider $u\in\rd\smallsetminus\{0\}$, then $\norm{u/\norm{u}_p}_q\leq1$ and \eqref{Eq-norm_q<norm_p} is proved.

By using the triangular inequality and \eqref{Eq-norm_q<norm_p} with $q=1$ and $p=\be$, we infer that	for $0<\be\leq1$
		\begin{equation}\label{Eq-useful-ineq-1}
			\abs{\sum_{i=1}^d z_i}^\be\leq\sum_{i=1}^d\abs{z_i}^\be,
\;\;\; z=(z_1,\dots,z_d)\in\rd.
		\end{equation}
Now, by the triangular inequality and \eqref{Eq-useful-ineq-1} with $d=2$ we obtain
\begin{equation}\label{Eq-useful-ineq}
			\abs{x}^\be-\abs{y}^\be\leq\abs{x-y}^\be, \;\;\;
0<\be\leq 1, \;\; x,y\in\rd.
\end{equation}

\par

Next, we observe that for $z,w\in\rd$
\begin{multline*}
\abs{(\tau z,(1-\tau)w)}^2 =\tau^2\abs{z}^2+(1-\tau)^2\abs{w}^2
=\tau^2\abs{z}^2+(\tau^2+1-2\tau)\abs{w}^2\\
=\tau^2(\abs{z}^2+\abs{w}^2)+(1-2\tau)\abs{w}^2
=\tau^2\abs{(z,w)}^2+(1-2\tau)\abs{w}^2\\
\leq
			\begin{cases}
			\tau^2\abs{(z,w)}^2+0\qquad\text{if}\qquad1/2\leq\tau\leq1,\\
			\tau^2\abs{(z,w)}^2+1\abs{w}^2+\abs{z}^2=(1+\tau^2)\abs{(z,w)}^2\qquad\text{if}\qquad0\leq\tau<1/2,
			\end{cases}
\end{multline*}
		which gives
		\begin{equation}\label{Eq-useful-ineq-2}
			\abs{(\tau z,(1-\tau)w)}^{1/\ga}\leq
			\begin{cases}
			\tau^{1/\ga}\abs{(z,w)}^{1/\ga}\qquad\text{if}\qquad1/2\leq\tau\leq1,\\
			(1+\tau^2)^{1/2\ga}\abs{(z,w)}^{1/\ga}\qquad\text{if}\qquad0\leq\tau<1/2.
			\end{cases}
		\end{equation}		
		We can now prove \eqref{Eq-condition(sub-)exp-verified}:
		\begin{align*}
			\frac{w^\ga_{r+s}\phas}{w^{\ga}_r(y,\eta)}&=\exp\left((r+s)\abs{\phas}^{1/\ga}-r\abs{(y,\eta)}^{1/\ga}\right)\\
			&=\exp\left(r\left(\abs{\phas}^{1/\ga}-\abs{(y,\eta)}^{1/\ga}\right)+s\abs{\phas}^{1/\ga}\right)\\
			&\overset{\eqref{Eq-useful-ineq}}{\leq} \exp\left(r\abs{\phas-(y,\eta)}^{1/\ga}+s\abs{\phas}^{1/\ga}\right)\\
			&= \exp(r\abs{(\o-\eta,y-x)}^{1/\ga}+s\abs{\phas}^{1/\ga}-s\abs{(\tau (x- y),(1-\tau)(\o-\eta))}^{1/\ga} ) \\
			&\times \exp(s\abs{(\tau (x- y),(1-\tau)(\o-\eta))}^{1/\ga})\\
			&\overset{\eqref{Eq-useful-ineq}}{\leq} \exp(r\abs{(\o-\eta,y-x)}^{1/\ga}
            +s\abs{\phas-(\tau (x-y),(1-\tau)(\o-\eta))}^{1/\ga}\\
			&+s\abs{(\tau (x-y),(1-\tau)(\o-\eta))}^{1/\ga})\\
			&= \exp(r\abs{(\o-\eta,y-x)}^{1/\ga}+s\abs{((1-\tau)x+\tau y,\tau\o+(1-\tau)\eta)}^{1/\ga}\\
			&+s\abs{(\tau x-\tau y,(1-\tau)\o-(1-\tau)\eta)}^{1/\ga})\\
			&\overset{\eqref{Eq-useful-ineq-2}}{\leq}
			\begin{cases}
			\exp((r+s\tau^{1/\ga})\abs{(\o-\eta,y-x)}^{1/\ga}\\
			\qquad+s\abs{((1-\tau)x+\tau y,\tau\o+(1-\tau)\eta)}^{1/\ga})\quad\text{if}\quad1/2\leq\tau\leq1,\\
			\exp((r+s(1+\tau^2)^{1/2\ga})\abs{(\o-\eta,y-x)}^{1/\ga}\\
			\qquad+s\abs{((1-\tau)x+\tau y,\tau\o+(1-\tau)\eta)}^{1/\ga})\quad\text{if}\quad0\leq\tau<1/2,
			\end{cases}
		\end{align*}
		and Lemma \ref{Lem-condition(sub-)exp-verified} follows from the assumption \eqref{Eq-condition-on-t}.

\textbf{Proof of Lemma \ref{lm:optaustft}}
 Consider $\tau\in(0,1)$ and recast the $\tau$-Winger distribution $W_\tau(\f,f)$ using the operator $\cA_\tau f(t)\coloneqq f\left(\frac{\tau-1}{\tau}t\right)$:
\begin{align*}
W_\tau(\f,f)\phas & =\frac1{\tau^{d}}e^{2\pi i\frac1{\tau}\om x}V_{\cA_\tau f}\f\left(\frac1{1-\tau}x,\frac1{\tau}\om\right)\\
&=\frac1{\tau^{d}}e^{2\pi i\frac1{\tau}\om x}\<\f,M_{\frac1{\tau}\om}T_{\frac1{1-\tau}x}\cA_\tau f\>\\
&=\frac1{\tau^{d}}e^{2\pi i\frac1{\tau}\om x}\<\left(\frac{\tau}{1-\tau}\right)^d\cA_{1-\tau} T_{-\frac1{1-\tau}x}M_{-\frac1{\tau}\om}\f,f\>\\
& =\frac1{\tau^{d}}e^{2\pi i\frac1{\tau}\om x}\overline{\<f,\left(\frac{\tau}{1-\tau}\right)^d\cA_{1-\tau} T_{-\frac1{1-\tau}x}M_{-\frac1{\tau}\om}\f\>}
\end{align*}	
\begin{align*}
&=\frac1{\tau^{d}}e^{2\pi i\frac1{\tau}\om x}\int_{\rdd}\overline{V_gf(z)}\,\overline{\<\pi(z)g,\left(\frac{\tau}{1-\tau}\right)^d\cA_{1-\tau} T_{-\frac1{1-\tau}x}M_{-\frac1{\tau}\om}\f\>}\,dz\\
&=\frac1{\tau^{d}}e^{2\pi i\frac1{\tau}\om x}\int_{\rdd}\overline{V_gf(z)}\,\<\left(\frac{\tau}{1-\tau}\right)^d\cA_{1-\tau} T_{-\frac1{1-\tau}x}M_{-\frac1{\tau}\om}\f,\pi(z)g\>\,dz\\
&=\frac1{\tau^{d}}e^{2\pi i\frac1{\tau}\om x}\int_{\rdd}\overline{V_gf(z)}\,\<\f,M_{\frac1{\tau}\om}T_{\frac1{1-\tau}x}\cA_\tau \pi(z)g\>\,dz\\
&=\int_{\rdd}\overline{V_gf(z)}\,\frac1{\tau^{d}}e^{2\pi i\frac1{\tau}\om x}\<\f,M_{\frac1{\tau}\om}T_{\frac1{1-\tau}x}\cA_\tau\pi(z)g\>\,dz\\
&=\int_{\rdd}\overline{V_gf(z)}\,\frac1{\tau^{d}}e^{2\pi i\frac1{\tau}\om x}V_{\cA_\tau\pi(z)g}\f\left(\frac1{1-\tau}x,\frac1{\tau}\om\right)\,dz\\
&=\int_{\rdd}\overline{V_gf(z)}\,W_\tau(\f,\pi(z)g)\phas\,dz.
\end{align*}
Therefore
\begin{multline*}
\<\Opt(\sigma) f,\f\>=\<\sigma,W_\tau(\f,f)\>=\<\sigma,\int_{\rdd}\overline{V_gf(z)}\,W_\tau(\f,\pi(z)g)\phas\,dz\>
\\
=\int_{\rdd}V_gf(z)\,\<\sigma,W_\tau(\f,\pi(z)g)\phas\>\,dz
=\int_{\rdd}V_gf(z)\,\<\Opt(\sigma)(\pi(z)g),\f\>\,dz
\end{multline*}
and \eqref{Eq-Proof-ii} holds true when $\tau\in(0,1)$.

For the cases $\tau=0,1$ we need the operator $J$ defined in \eqref{Eq-def-T-J} and the following equalities which come from easy computations (cf. \cite{Grochenig_2001_Foundations}):
\begin{equation*}
	V_gf\phas=e^{-2\pi ix\o}V_{\hg}\hf(\o,-x),\,\cF T_x=M_{-x}\cF,\,\cF M_\o=T_\o\cF,\,T_xM_\o=e^{-2\pi ix\o}M_\o T_x.
\end{equation*}

Therefore \eqref{Eq-Proof-ii} is proved for $\tau=0,1$ in the following manner. We put $z=\phas$ and let $\sigma$ acts
on functions of variables $(y,\eta)$:
\begin{align*}
	\<\Opz(\sigma)f,\f\> & =\<\sigma,W_0(\f,f)\>  =\<\sigma,e^{-2\pi iy\eta}\f(y)\overline{\hf(\eta)}\>\\
	&=\<\sigma,e^{-2\pi iy\eta}\f(y)\overline{\intrdd V_{\hg}\hf(z')\pi(z')\hg(\eta)\,dz'}\>\\
	&=\<\sigma,e^{-2\pi iy\eta}\f(y)\overline{\intrdd V_{\hg}\hf(Jz)\pi(Jz)\hg(\eta)\,dz}\>\\
	&=\<\sigma,e^{-2\pi iy\eta}\f(y)\overline{\intrdd V_{g}f(z)e^{2\pi ix\o}\pi(Jz)\hg(\eta)\,dz}\>\\
\end{align*}
\begin{align*}
	&= \<\sigma,\intrdd \overline{V_{g}f(z)}e^{-2\pi iy\eta}\f(y)\overline{e^{2\pi ix\o}\pi(Jz)\hg(\eta)}\,dz\>\\
	&=\intrdd V_{g}f(z)\<\sigma,e^{-2\pi iy\eta}\f(y)\overline{e^{2\pi ix\o}\pi(Jz)\hg(\eta)}\>\,dz\\
	&=\intrdd V_{g}f(z)\<\sigma,e^{-2\pi iy\eta}\f(y)\overline{\widehat{\pi(z)g}(\eta)}\>\,dz\\
	&=\intrdd V_{g}f(z)\<\sigma,W_0(\f,\pi(z)g)\>\,dz\\
	&=\intrdd V_{g}f(z)\<\Opz(\sigma)\pi(z)g,\f\>\,dz.\\
\end{align*}
The case $\tau = 1$, i.e.
$$ \displaystyle
	\<\Opo(\sigma)f,\f\> = \intrdd V_gf(z)\<\Opo(\sigma)\pi(z)g,\f\>\,dz,
$$
can be proved in the same manner. The details are left to the reader.


\section*{Acknowledgments}
F. Bastianoni is member of the Gruppo Nazionale per l'Analisi Matematica, la Probabilit\`a e le loro Applicazioni (GNAMPA) of the Istituto Nazionale di Alta Matematica (INdAM).
\par
The authors would like to thank Elena Cordero and Fabio Nicola for fruitful conversations and comments.

\end{document}